\definecolor{citecol}{rgb}{0.07,0.07,0.05}
\definecolor{urlcol}{rgb}{0.06,0.04,0.09}
\definecolor{linkcol}{rgb}{0.01,0.03,0.08}
 \numberwithin{equation}{subsection}
\theoremstyle{plain}
\newtheorem{theorem}{Theorem}[section]
\newtheorem{lemma}[theorem]{Lemma}
\newtheorem{proposition}[theorem]{Proposition}
\newtheorem{claim}[theorem]{Claim}
\newtheorem{corollary}[theorem]{Corollary}
\newtheorem{conjecture}[theorem]{Conjecture}
\theoremstyle{definition}
\newtheorem{definition}[theorem]{Definition}
\newtheorem{remark}[theorem]{Remark}
\newtheorem{example}[theorem]{Example}
\theoremstyle{remark}
\newcommand{\BF}{{\mathbb F}}
\newcommand{\BL}{{\mathbb L}}
\newcommand{\BN}{{\mathbb N}}
\newcommand{\BP}{{\mathbb P}}
\newcommand{\BQ}{{\mathbb Q}}
\newcommand{\BV}{{\mathbb V}}
\newcommand{\BX}{{\mathbb X}}
\newcommand{\BY}{{\mathbb Y}}
\newcommand{\BZ}{{\mathbb Z}}
\newcommand{\CN}{{\mathcal N}}
\newcommand{\CY}{{\mathcal Y}}
\newcommand{\CZ}{{\mathcal Z}}
\newcommand{\Fb}{{\mathfrak b}}
\DeclareMathOperator{\Lie}{Lie}
\DeclareMathOperator{\Charpol}{Charpol}
\DeclareMathOperator{\End}{End}
\DeclareMathOperator{\Spf}{Spf}
\DeclareMathOperator{\Hom}{Hom}
\DeclareMathOperator{\val}{val}
\DeclareMathOperator{\Mod}{mod}
\DeclareMathOperator{\diag}{diag}
\author{Sungyoon Cho}
\address[Sungyoon Cho]{Department of Mathematics, University of Arizona}
\email{sungyooncho@math.arizona.edu}
\title[Representation densities II]{On local representation densities of hermitian forms and special cycles II}
\date{\today}
\begin{document}

\begin{abstract}
In this paper, we prove that there are certain relations among representation densities and provide an efficient way to compute representation densities by using these relations. As an application, we compute some arithmetic intersection numbers of special cycles on unitary Shimura varieties and propose a conjecture on these.
\end{abstract}

\maketitle
\tableofcontents{}

\section{Introduction}
In \cite{KR2} and \cite{KR3}, Kudla and Rapoport made a precise conjectural formula for the arithmetic intersection numbers of special cycles on unitary Shimura varieties with hyperspecial level structure. This is called the Kudla-Rapoport conjecture. This is proved by Li and Zhang in 2019 \cite{LZ}. In \cite{Cho2}, we made a variant of the Kudla-Rapoport conjecture in the case that the unitay Shimura variety has minuscule parahoric level structures. This conjectural formula predicts a relation between the arithmetic intersection numbers of special cycles on unitay Rapoport-Zink spaces and the derivative of some representation densities of hermitian forms. To prove this conjecture, one may hope to compute the arithmetic intersection numbers of special cycles directly and compare these with the analytic side. However, there is currently no way to compute the geometric side directly except some low dimensional cases. For this reason, we hope to get an idea from the analytic side of the conjecture since it is at least possible to compute.

The analytic side of the Kudla-Rapoport conjecture is related to representation densities of hermitian forms. Therefore, we would like to test our geometric guesses by computing some representation densities. Let $p$ be an odd prime and let $F$ be an unramified extension of $\BQ_p$ with ring of integers $O_F$ and residue field $\BF_q$. Let $E$ be a quadratic unramified extension of $F$ with ring of integers $O_E$. For $n \times n$ hermitian matrices $A$ and $B$, we define the representation density $\alpha(A,B)$ by
	\begin{equation*}
	\alpha(A,B)=\lim_{d \rightarrow \infty} (q^{-d})^{n(2m-n)}\vert \lbrace X \in M_{m,n}(O_E/\pi^dO_E)\vert A[X]\equiv B (\Mod \pi^d)\rbrace \vert.
\end{equation*}

 We have an explicit formula for these representation densities in \cite{Hir2}, however, the actual computation is very complicated.
For example, for $A=A_{40}:=\left(\begin{array}{cc}p^4 & 0 \\ 0 & 1 \end{array}\right)$ and $B=A_{42}:=\left(\begin{array}{cc}p^4 & 0 \\ 0 & p^2 \end{array}\right)$, we used several pages to compute that $\alpha(A_{40},A_{42})=q^6(1+\dfrac{1}{q})^2$ with the above formula. Therefore, we need a more efficient way to compute representation densities.

In the present paper, we prove that there are certain relations among representation densities and by using this, we give an efficient way to compute representation densities. We also apply this method to compute some arithmetic intersection numbers of special cycles and then give a conjecture on these.

We now describe our results in more detail. We define the set of hermitian matrices $X_n(E)=\lbrace X \in GL_{n}(E) \vert ^tX^*=X \rbrace$, and $X_n(O_E)=X_n(E) \cap M_{n,n}(O_E)$. Let $\Lambda_{n}^+$ be the set of $\lambda=(\lambda_1,\lambda_2,\dots,\lambda_n)$ such that $\lambda_1\geq \dots \geq \lambda_n \geq 0$. For $\lambda \in \Lambda_n^+$, we write $A_{\lambda}$ for $\diag(p^{\lambda_1},\dots,p^{\lambda_n})$. Then, the complete set of representatives of $GL_{n}(O_E)$-equivalence classes in $X_{n}(O_E)$ is given by $\lbrace \lambda \in \Lambda_n^+ \vert A_{\lambda} \rbrace$.

Let $\Lambda_{n,s}^+$ be the subset of $\Lambda_n^+$ such that
	\begin{equation*}
	\Lambda_{n,s}^+=\lbrace \xi=(\xi_1,\dots,\xi_n) \in \Lambda_n^+ \text{  }\vert \text{  }\xi_{n-s} \geq 1, \xi_{n-s+1}=\dots=\xi_{n}=0 \rbrace.
\end{equation*}
For $\xi \in \Lambda_{n,s}^+$, we define $\xi_{a,b}^+$ as
\begin{equation*}
	\xi_{a,b}^+:=(\xi_1,\dots,\xi_{n-s},\overset{a}{\overbrace{2,\dots,2}},\overset{b}{\overbrace{1,\dots,1}},\overset{s-a-b}{\overbrace{0,\dots,0}})
\end{equation*}
For integers $1 \leq s \leq n$, $0 \leq i \leq s$, we define $d_{n,s,i}$ by the coefficients of the polynomial
\begin{equation*}\begin{array}{ll}
		\sum_{i=0}^s d_{n,s,i}X^i\\\\
		=(1-(-q)^{-n}X)(1-(-q)^{-n+1}X)\dots(1-(-q)^{-n+s-1}X)
	\end{array}
\end{equation*}

Then, our main theorem is as follows.
\begin{theorem}\label{theorem1.1}(Theorem \ref{theorem2.5})
	Let $s$ and $n$ be integers such that $1 \leq s \leq n$, and let $\xi \in \Lambda_{n,s}^+$. Then, for any $B \in X_n(O_E)$ such that $\pi^{-1}B$ is also integral (i.e., the representative of $B$ is $A_{\lambda}$ for some $\lambda \in \Lambda_n^{+}$ such that $\lambda \geq (1,\dots,1)$), we have the following equality.
	\begin{equation*}
		\begin{array}{l}
			\mathlarger{\mathlarger{\sum_{i=0}^{s} d_{n,s,i}\alpha(A_{\xi_{0,i}^+},B)-\dfrac{(-1)^s}{(-q)^{ns}}\sum_{i=0}^{s}d_{n,s,i}\alpha(A_{\xi_{i,s-i}^+},B)=0}}.
		\end{array}
	\end{equation*}
\end{theorem}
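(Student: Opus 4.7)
The plan is to derive the identity by applying Hironaka's explicit formula \cite{Hir2} for hermitian representation densities and then performing a combinatorial manipulation that exploits the special structure of the partitions $\xi_{a,b}^+$ together with the hypothesis that $\pi^{-1}B$ is integral.

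First, I would expand each term $\alpha(A_{\xi_{0,i}^+},B)$ and $\alpha(A_{\xi_{i,s-i}^+},B)$ using Hironaka's formula, which presents the representation density as a sum indexed by combinatorial data (subsets of coordinates or Weyl group elements), with each summand factoring into a piece depending on the partition of $A$ and a piece depending on $B$. The key structural observation is that $\xi_{i,s-i}^+ = \xi_{0,i}^+ + (0,\dots,0,1,\dots,1)$: the two partitions differ by $1$ in each of the last $s$ coordinates. Since $\pi^{-1}B$ is integral, I can rewrite the defining equation $A_{\xi_{i,s-i}^+}[X] \equiv B \pmod{\pi^d}$ after ``dividing through by $\pi$'' on the last $s$ rows, relating $\alpha(A_{\xi_{i,s-i}^+},B)$ to an auxiliary density involving $A_{\xi_{0,i}^+}$ and $\pi^{-1}B$ with an explicit normalization factor that accounts for the prefactor $(-1)^s/(-q)^{ns}$ on the right-hand side.

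Second, after this reduction both sides of the identity take the form $\sum_i d_{n,s,i}\alpha(A_{\xi_{0,i}^+},\cdot)$, with possibly different $B$-arguments. The coefficient polynomial $\sum_i d_{n,s,i}X^i = \prod_{j=0}^{s-1}(1-(-q)^{j-n}X)$ is designed so that the resulting alternating sum vanishes term by term in Hironaka's expansion: each summand's $B$-dependent piece is a symmetric polynomial in the Satake parameters of $B$, and $\prod_{j=0}^{s-1}(1-(-q)^{j-n}X)$ is precisely the Hecke-eigenpolynomial that annihilates these symmetric pieces after pairing with the $\xi$-dependent factor. Verifying this pairing reduces the problem to a concrete polynomial identity in the Satake variables.

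The main obstacle I anticipate is in the first step: tracking the normalization factor $(-1)^s/(-q)^{ns}$ requires careful bookkeeping of the unitary signs and $q$-powers that appear when rescaling a hermitian form by $\pi$ in only some of its coordinates. Since the relevant scaling affects the last $s$ of the $n$ coordinates of $A$, the naive identity relating $\alpha(\pi A,\pi B)$ to $\alpha(A,B)$ for full rescaling does not directly apply, and a finer ``partial rescaling'' analysis --- presumably built on top of the author's companion paper ``Representation densities I'' --- is needed. I would expect this partial-rescaling step to constitute the technical heart of the proof, while the polynomial-identity step becomes largely formal once the correct reduction is in place.
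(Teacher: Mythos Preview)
Your starting point (Hironaka's explicit formula) is the right one, but both of your two main steps have genuine gaps, and neither matches what the paper actually does.

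\textbf{Step 1 does not work as stated.} There is no clean ``partial rescaling'' identity of the form $\alpha(A_{\xi_{i,s-i}^+},B)=c\cdot\alpha(A_{\xi_{0,i}^+},B')$ for a single constant $c$ and a single modified $B'$. In Hironaka's formula the dependence on $\xi$ enters only through the exponent $\langle\xi',\mu'\rangle$, and one computes
\[
(\xi_{i,s-i}^+)'_1-(\xi_{0,i}^+)'_1=s-i,\qquad (\xi_{i,s-i}^+)'_2-(\xi_{0,i}^+)'_2=i,
\]
so the ratio of the $\mu$-th summands is $(-q)^{(s-i)\mu'_1+i\mu'_2}$, which genuinely depends on $\mu$. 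Hence the two densities are not related by a scalar, and the appearance of $(-1)^s/(-q)^{ns}$ is not explained by any rescaling of $B$; it emerges only after the full combinatorial cancellation described below.

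\textbf{Step 2 is not correct as stated.} The sum $\sum_i d_{n,s,i}\,\alpha(A_{\xi_{0,i}^+},B)$ does \emph{not} vanish (if it did, the theorem would be trivial, since the right-hand side would also have to vanish by the same argument). The polynomial $\prod_{j=0}^{s-1}(1-(-q)^{j-n}X)$ is not a Hecke annihilator of the $B$-dependent piece; rather, its role is to produce the specific product $\prod_{j=0}^{s-1}(1-(-q)^{-l+k+j})$ when evaluated at $X=(-q)^{n-l+k}$, and it is the interaction of \emph{this} factor with the $I_1$-term that drives the cancellation.

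\textbf{What the paper actually does.} After expanding via Hironaka, the paper stratifies the sum over $\mu\le\tilde\lambda$ by writing $\mu=(\overline{\mu_l},1^k,0^{l-k})$ with $\overline{\mu_l}\in\Lambda_{n-l}^{2+}$. For fixed $\overline{\mu_l}$ the factors $I_j(\mu,\lambda)$ with $j\ge 2$ are independent of $k$, so one is reduced to a sum over $k$ involving only $I_1(\mu_{k,l},\lambda)$ and the exponent $\langle(\xi_{a,b}^+)',\mu_{k,l}'\rangle$. Here the hypothesis $\lambda\ge(1,\dots,1)$ is used in an essential and simple way: it forces $(\tilde\lambda)'_1=(\tilde\lambda)'_2=n$, which makes $I_1(\mu_{k,l},\lambda)$ computable in closed form as a $q$-binomial sum. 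After substituting the definition of $d_{n,s,i}$, the theorem becomes the purely combinatorial identity (Lemma~2.11)
\[
\sum_{k=0}^{l}(-1)^{k}(-q)^{(l-s)k-\frac{k(k+1)}{2}}\begin{bmatrix}l\\k\end{bmatrix}\prod_{j=0}^{k-1}(1+(-q)^{l-j})\Bigl\{\prod_{i=0}^{s-1}(1-(-q)^{-l+k+i})-(-1)^s(-q)^{s(k-l)}\prod_{i=0}^{s-1}(1-(-q)^{-l+i})\Bigr\}=0,
\]
which is then proved by an explicit telescoping induction (Claim~2.12). There is no Satake-parameter or Hecke-algebra interpretation in the argument; it is a direct $q$-series computation.
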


For example, when $n=2$, the theorem implies that for any $B$ such that $\pi^{-1}B$ is integral and $\lambda \geq 1$, we have\begin{equation*}\begin{array}{l}
\alpha(A_{\lambda0},B)=\dfrac{1}{q^4}\alpha(A_{\lambda2},B)
	\end{array}
\end{equation*}

Therefore, we can simply compute that
\begin{equation*}
	\alpha(A_{40},A_{42})=\dfrac{1}{q^4}\alpha(A_{42},A_{42})=q^4 \alpha(A_{20},A_{20})=q^4(1+\dfrac{1}{q})\alpha(A_2,A_2)=q^6(1+\dfrac{1}{q})^2.
\end{equation*}

Our theorem is true for arbitrary $n$. Therefore, this can be applied to compute any representation densities.

When we apply the theorem to compute representation densities, it is better to use normalized representation densities and a normalized version of the theorem. Also, the Cho-Yamauchi's formula for the analytic part of the Kudla-Rapoport conjecture (\cite[Theorem 3.5.1]{LZ}, \cite[Theorem 4.7]{Cho3}) is formulated in terms of normalized representation densities.

For $A, B \in X_n(O_E)$, we define the normalized representation density $A(B)$ as
\begin{equation*}
	A(B):=\dfrac{\alpha(A,B)}{\alpha(A,A)}.
\end{equation*}
Then this has the following reductions.
\begin{proposition}\label{proposition1.2}(Proposition \ref{proposition2.14})
	For $\xi, \lambda \in \Lambda_{n-1}^+$, we consider $(\xi,0), (\lambda,0)$ in $\Lambda_n^+$. Then, we have
	\begin{equation*}
		A_{\xi,0}(A_{\lambda,0})=A_{\xi}(A_{\lambda}).
	\end{equation*}
\end{proposition}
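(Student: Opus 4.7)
The plan is to interpret the equation $X^* A_{\xi,0} X \equiv A_{\lambda,0} \pmod{\pi^d}$ geometrically as counting isometric embeddings of hermitian lattices, and then to strip off the common $\langle 1 \rangle$ summand in $A_{\xi,0} = \diag(A_\xi, 1)$ and $A_{\lambda,0} = \diag(A_\lambda, 1)$ via Witt cancellation. Concretely, I would decompose each $X \in M_{n,n}(O_E/\pi^d)$ into its last column and the block of the first $n-1$ columns, $X = (Y \mid x)$. A direct block computation splits $X^* A_{\xi,0} X \equiv A_{\lambda,0} \pmod{\pi^d}$ into three conditions: $x^* A_{\xi,0} x \equiv 1$, $Y^* A_{\xi,0} x \equiv 0$, and $Y^* A_{\xi,0} Y \equiv A_\lambda$, all modulo $\pi^d$. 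The total count then becomes a sum over ``unit-norm'' vectors $x$ of the number of $Y$ whose columns lie in $x^\perp$ and whose Gram matrix represents $A_\lambda$.

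Next, I would show that for any $x$ with $x^* A_{\xi,0} x \equiv 1 \pmod{\pi^d}$, the orthogonal complement $x^\perp$ is a free $O_E/\pi^d$-module of rank $n-1$ whose induced hermitian form is isometric to $A_\xi$. Indeed, $x^*A_{\xi,0}x \equiv 1 \pmod{\pi}$ forces $A_{\xi,0} x$ to be primitive modulo $\pi$ (one checks the two cases of whether the last coordinate of $x$ is a unit, using $\xi_i \geq 0$). Hensel's lemma then lifts $x$ to $\tilde x \in O_E^n$ with $\tilde x^* A_{\xi,0} \tilde x = 1$ exactly, producing an orthogonal decomposition $(O_E^n, A_{\xi,0}) = O_E \tilde x \oplus \tilde x^\perp$ with $O_E \tilde x \cong \langle 1 \rangle$. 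Since $(O_E^n, A_{\xi,0}) \cong \langle 1 \rangle \oplus (O_E^{n-1}, A_\xi)$ by construction, Witt cancellation for nondegenerate hermitian lattices over $O_E$ (valid for odd $p$) identifies $\tilde x^\perp$ isometrically with $(O_E^{n-1}, A_\xi)$. Reducing modulo $\pi^d$ yields a bijection between the set of $Y \in M_{n,n-1}(O_E/\pi^d)$ with columns in $x^\perp$ and $Y^* A_{\xi,0} Y \equiv A_\lambda$, and the set of $Y' \in M_{n-1,n-1}(O_E/\pi^d)$ with $(Y')^* A_\xi Y' \equiv A_\lambda$.

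Combining the two steps gives the factorization
\[
N_d(A_{\xi,0}, A_{\lambda,0}) = C_d \cdot N_d(A_\xi, A_\lambda),
\]
where $C_d := \#\{x \in (O_E/\pi^d)^n : x^* A_{\xi,0} x \equiv 1 \pmod{\pi^d}\}$ depends only on $A_{\xi,0}$ (not on $\lambda$), and $N_d(A,B)$ is the cardinality of $\{X : X^* A X \equiv B \pmod{\pi^d}\}$ at the appropriate size. The same relation holds with $\lambda$ replaced by $\xi$. Applying the normalization factors from the definition of $\alpha$, one gets $\alpha(A_{\xi,0}, A_{\lambda,0}) = C \cdot \alpha(A_\xi, A_\lambda)$ and $\alpha(A_{\xi,0}, A_{\xi,0}) = C \cdot \alpha(A_\xi, A_\xi)$ for a common constant $C := \lim_{d \to \infty} (q^{-d})^{2n-1} C_d$. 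The factor $C$ cancels in the ratio, producing $A_{\xi,0}(A_{\lambda,0}) = A_\xi(A_\lambda)$.

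The step I expect to be the main obstacle is making the Witt-cancellation argument rigorous at the level of mod-$\pi^d$ counts rather than just over $O_E$ or $E$: one needs the isometric identification $x^\perp \cong A_\xi$ to be implemented in a way compatible with the literal counting of $Y$'s modulo $\pi^d$. This requires the Hensel lift of $x$ to be carried out in a controlled way (taking $d$ large enough) and the induced change of basis on $x^\perp$ to be verified to produce a bijection of matrices modulo $\pi^d$. Everything else amounts to direct block matrix calculations.
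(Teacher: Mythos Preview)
Your approach is correct and is essentially what underlies the paper's proof, but the paper does not carry out the argument: it simply invokes \cite[Corollary~9.12]{KR2}, which already gives the factorization
\[
\alpha(A_{\xi,0},A_{\lambda,0})=\alpha(A_{\xi,0},A_0)\,\alpha(A_{\xi},A_{\lambda}),
\]
and then cancels the common factor $\alpha(A_{\xi,0},A_0)$ against the same factor appearing in $\alpha(A_{\xi,0},A_{\xi,0})$. Your constant $C=\lim_d (q^{-d})^{2n-1}C_d$ is precisely this $\alpha(A_{\xi,0},A_0)$, and your block decomposition plus Witt-cancellation argument is exactly how one proves the cited corollary in this setting. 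So you are not taking a different route so much as re-deriving the quoted result from scratch.

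The one place where you correctly flag a subtlety --- passing from the integral Witt cancellation $\tilde x^\perp\cong (O_E^{n-1},A_\xi)$ to an honest bijection of mod-$\pi^d$ solution sets --- is genuine but harmless: the isometry $\tilde x^\perp\cong (O_E^{n-1},A_\xi)$ extends to an element of $GL_n(O_E)$ carrying $(O_E^n,A_{\xi,0})$ to itself and sending $\tilde x$ to the last basis vector; reducing this matrix mod $\pi^d$ gives the needed bijection on $Y$'s. The Hensel lift $\tilde x\equiv x\pmod{\pi^d}$ exists because primitivity of $A_{\xi,0}x$ makes the map $v\mapsto \Tr_{E/F}\bigl((A_{\xi,0}x)^*v\bigr)$ surjective onto $O_F$, so the quadratic correction can be killed order by order. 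With that made explicit, your proof is complete and self-contained, whereas the paper's is a two-line citation.
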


\begin{proposition}\label{proposition1.3}(Proposition \ref{proposition2.15})
	For $\xi, \lambda$ in $\Lambda_n^+$, such that $\xi, \lambda \geq (1,1\dots,1)$, we consider
	\begin{equation*}\begin{array}{l}
			\xi-1:=(\xi_1-1,\dots,\xi_{n}-1) \in \Lambda_n^+\\
			\lambda-1:=(\lambda_1-1,\dots,\lambda_n-1) \in \Lambda_n^+.
		\end{array}
	\end{equation*}
	Then, we have \begin{equation*}
		A_{\xi}(A_{\lambda})=A_{\xi-1}(A_{\lambda-1}).
	\end{equation*}
\end{proposition}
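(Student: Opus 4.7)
The plan is to reduce the identity to a simple rescaling argument at finite level and then pass to the limit. Because $E/F$ is unramified, $\pi = p$ serves as a uniformizer of both fields, and the hypothesis $\xi, \lambda \geq (1,\dots,1)$ gives the factorizations $A_\xi = \pi\, A_{\xi-1}$ and $A_\lambda = \pi\, A_{\lambda-1}$. The key observation is then $A_\xi[X] = \pi\, A_{\xi-1}[X]$ for any $X$, so the congruence $A_\xi[X]\equiv A_\lambda \pmod{\pi^d}$ is equivalent to $A_{\xi-1}[X]\equiv A_{\lambda-1} \pmod{\pi^{d-1}}$.

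Write $N_d(A,B) := |\{X \in M_n(O_E/\pi^d O_E) : A[X] \equiv B \pmod{\pi^d}\}|$, so that $\alpha(A,B)$ is (up to the normalizing factor in the definition) the limit of $N_d(A,B)$. The equivalence above, combined with the elementary lift count that each $X_0 \in M_n(O_E/\pi^{d-1})$ lifts to $|O_E/\pi|^{n^2} = q^{2n^2}$ elements of $M_n(O_E/\pi^d)$, gives
\[
N_d(A_\xi, A_\lambda) = q^{2n^2}\cdot N_{d-1}(A_{\xi-1}, A_{\lambda-1}).
\]
Plugging this into the defining limit and matching the normalizing factors $(q^{-d})^{n(2m-n)}$ (with $m=n$) against the index shift $d\mapsto d-1$ produces a clean identity $\alpha(A_\xi, A_\lambda) = q^{n^2}\,\alpha(A_{\xi-1}, A_{\lambda-1})$.

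To conclude, I would apply the same identity in the diagonal case $\lambda = \xi$ to obtain $\alpha(A_\xi, A_\xi) = q^{n^2}\,\alpha(A_{\xi-1}, A_{\xi-1})$. The factor $q^{n^2}$ then cancels in the normalized ratio $A_\xi(A_\lambda) = \alpha(A_\xi, A_\lambda)/\alpha(A_\xi, A_\xi)$, yielding the desired equality $A_\xi(A_\lambda) = A_{\xi-1}(A_{\lambda-1})$. The only point requiring care is checking that the condition $A_{\xi-1}[X] \equiv A_{\lambda-1} \pmod{\pi^{d-1}}$ depends only on $X \bmod \pi^{d-1}$; this is immediate from expanding $A_{\xi-1}[X + \pi^{d-1}Y]$ and noting the cross terms have $\pi$-adic valuation at least $d-1$, so there is no genuine obstacle and the entire argument rests on the single scaling observation $A_\xi = \pi A_{\xi-1}$.
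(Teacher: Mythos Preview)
Your proof is correct and follows exactly the approach the paper takes: the paper simply asserts the two identities $\alpha(A_\xi,A_\lambda)=q^{n^2}\alpha(A_{\xi-1},A_{\lambda-1})$ and $\alpha(A_\xi,A_\xi)=q^{n^2}\alpha(A_{\xi-1},A_{\xi-1})$ and divides, while you supply the finite-level counting argument that justifies them.
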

Also, Theorem \ref{theorem1.1} gives the following corollaries when $n=2$ and $n=3$.
\begin{corollary}[$n=2$]\label{corollary1.3}
	Assume that $\lambda \geq 3$. For any $B \in X_2(O_E)$ such that $\pi^{-1}B$ is integral, the following equalities hold.
	\begin{enumerate}
		\item $A_{00}(B)=(q+1)A_{11}(B)-qA_{22}(B)$.
		\item $A_{10}(B)=A_{21}(B)$.
		\item $A_{20}(B)=q(q-1)A_{22}(B)$.
		\item $A_{\lambda0}(B)=q^2A_{\lambda2}(B)$.
	\end{enumerate}
\end{corollary}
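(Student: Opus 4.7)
The plan is to specialize Theorem~\ref{theorem1.1} at $n=2$ for appropriately chosen $s$ and $\xi \in \Lambda_{2,s}^+$, and to convert the resulting absolute-density identities into normalized ones using explicit values of the self-densities $\alpha(A_\lambda, A_\lambda)$.

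The core input is the $s=1$ case: for $\xi = (\lambda, 0) \in \Lambda_{2,1}^+$ with $\lambda \geq 1$, the polynomial $(1 - q^{-2}X)$ yields $d_{2,1,0} = 1$ and $d_{2,1,1} = -q^{-2}$, while $(-1)^s/(-q)^{ns} = -q^{-2}$. Writing out Theorem~\ref{theorem1.1}, the two middle terms carrying $\alpha(A_{\lambda, 1}, B)$ cancel and leave the one-line identity
\begin{equation*}
\alpha(A_{\lambda, 0}, B) \;=\; q^{-4}\, \alpha(A_{\lambda, 2}, B). \tag{$\ast$}
\end{equation*}
Items (2), (3), and (4) are the normalizations of $(\ast)$ at $\lambda = 1$, $\lambda = 2$, and $\lambda \geq 3$ respectively (in item (2) one uses that $A_{12}$ and $A_{21}$ are $GL_2(O_E)$-equivalent). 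The conversion requires the ratio $\rho(\lambda) := \alpha(A_{\lambda, 2}, A_{\lambda, 2}) / \alpha(A_{\lambda, 0}, A_{\lambda, 0})$, and the claim is that $\rho(1) = q^4$, $\rho(2) = q^5(q-1)$, and $\rho(\lambda) = q^6$ for $\lambda \geq 3$. The change-of-variable identity $\alpha(pA, pA) = q^{n^2}\alpha(A, A) = q^4 \alpha(A, A)$, proved by writing $X = X_0 + \pi^{d-1}X_1$ in the defining limit and counting lifts, gives $\alpha(A_{\lambda, 2}, A_{\lambda, 2}) = q^8\, \alpha(A_{\lambda - 2, 0}, A_{\lambda - 2, 0})$ for $\lambda \geq 2$ and reduces the computation of $\rho(\lambda)$ to a ratio of self-densities in the ``second-entry-zero'' stratum. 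These are then supplied by Hironaka's explicit formula from \cite{Hir2} (or by an elementary count of the hermitian automorphism group of $\diag(p^\lambda, 1)$ modulo $\pi^d$), yielding the three claimed values.

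For item (1), I apply Theorem~\ref{theorem1.1} with $s = 2$ and $\xi = (0, 0) \in \Lambda_{2,2}^+$. The polynomial $(1 - q^{-2}X)(1 + q^{-1}X)$ produces $d_{2,2,0} = 1$, $d_{2,2,1} = q^{-2}(q - 1)$, $d_{2,2,2} = -q^{-3}$, and $(-1)^s/(-q)^{ns} = q^{-4}$. After expansion one obtains a five-term identity involving $\alpha(A_{00}, B),\ \alpha(A_{10}, B),\ \alpha(A_{11}, B),\ \alpha(A_{21}, B)$, and $\alpha(A_{22}, B)$. Substituting $(\ast)$ at $\lambda = 1$ for $\alpha(A_{10}, B)$ makes the $A_{10}$ and $A_{21}$ contributions cancel exactly, leaving
\begin{equation*}
\alpha(A_{00}, B) \;=\; q^{-4}(q + 1)\, \alpha(A_{11}, B) \,-\, q^{-7}\, \alpha(A_{22}, B).
\end{equation*}
Normalizing by $\alpha(A_{00}, A_{00})$ and applying the scaling identity $\alpha(A_{jj}, A_{jj}) = q^{4j}\, \alpha(A_{00}, A_{00})$ at $j = 1$ and $j = 2$ converts the coefficients into $(q + 1)$ and $-q$, which is item (1).

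The main obstacle is the case analysis of $\rho(\lambda)$. Both $\lambda = 1$ and $\lambda = 2$ are genuinely exceptional, reflecting the structurally different automorphism groups of the lattices $\diag(p^\lambda, 1)$ when $\lambda$ is small; only for $\lambda \geq 3$ does the ratio stabilize at $q^6$, which is exactly what makes the uniform formula $A_{\lambda, 0}(B) = q^2\, A_{\lambda, 2}(B)$ hold throughout part (4).
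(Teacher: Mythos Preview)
Your proof is correct and follows essentially the same route as the paper: apply Theorem~\ref{theorem1.1} at $n=2$ with $s=1$ (for items (2)--(4)) and $s=2$ (for item (1)), then normalize using the explicit self-densities $\alpha(A_\lambda,A_\lambda)$. The only difference is cosmetic: to extract (1) from the five-term $s=2$ identity, the paper first normalizes and then invokes the parity constraint $A(B)=0$ when $\val\det A\not\equiv\val\det B\pmod 2$ to split off (1) and (2) simultaneously, whereas you substitute the $s=1$ relation $(\ast)$ at $\lambda=1$ to cancel the odd-parity terms directly---both lead to the same three-term identity for $\alpha(A_{00},B)$.
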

\begin{corollary}[$n=3$]\label{corollary1.4}(Corollary \ref{corollary2.19})
	Assume that $\lambda, \kappa \geq 3$. For any $B \in X_3(O_E)$ such that $\pi^{-1}B$ is integral, the following equalities hold.
	\begin{enumerate}
		\item $A_{000}(B)=(q+1)A_{211}(B)-q^3A_{222}(B)$.
		
		\item $A_{100}(B)=(q^3+1)A_{111}(B)-qA_{221}(B)$.
		\item $A_{110}(B)=A_{211}(B)$.
		\item $A_{\lambda\kappa0}(B)=q^4A_{\lambda\kappa2}(B)$.
		\item $A_{\lambda20}(B)=q^3(q-1)A_{\lambda22}(B)$.
		\item $A_{220}(B)=q^2(q^2-q+1)A_{222}(B)$.
		\item $A_{\lambda00}(B)=q^2(q+1)A_{\lambda11}(B)-q^5A_{\lambda22}(B)$.
		\item $A_{200}(B)=q^2(q+1)A_{211}(B)-q^3(q^2-q+1)A_{222}(B)$.
		\item $A_{210}(B)=q(q-1)A_{221}(B)$.
		\item $A_{\lambda10}(B)=q^2A_{\lambda21}(B)$.
	\end{enumerate}
\end{corollary}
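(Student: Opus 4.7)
The plan is to deduce each of the ten identities as an instance of Theorem~\ref{theorem1.1} with $n=3$, followed by a normalization step. First, I compute the coefficients $d_{3,s,i}$ directly from the defining generating function for each $s \in \{1,2,3\}$: they are $1, q^{-3}$ for $s=1$; $1, q^{-3}-q^{-2}, -q^{-5}$ for $s=2$; and $1, q^{-1}-q^{-2}+q^{-3}, -q^{-3}+q^{-4}-q^{-5}, -q^{-6}$ for $s=3$.

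Next, I match each identity with a choice of $(\xi,s) \in \Lambda_{3,s}^+ \times \{1,2,3\}$. For $s=1$, varying $\xi = (\xi_1,\xi_2,0)$ with $\xi_2\geq 1$ gives two-term relations: the middle $\alpha(A_{\xi'},B)$ contributions (trailing entry $1$) cancel and Theorem~\ref{theorem1.1} collapses to $\alpha(A_\xi,B) = q^{-6}\alpha(A_{\xi''},B)$, where $\xi''$ is obtained by raising the trailing $0$ to a $2$. Instantiating $\xi = (1,1,0), (\lambda,\kappa,0), (\lambda,2,0), (2,2,0), (2,1,0), (\lambda,1,0)$ yields identities (3), (4), (5), (6), (9), (10) respectively. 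For $s=2$, varying $\xi = (\xi_1,0,0)$ gives genuine three-term relations; the choices $\xi = (1,0,0), (\lambda,0,0), (2,0,0)$ yield (2), (7), (8). Finally, $s=3$ with $\xi = (0,0,0)$ yields an identity with four terms on each side; after substituting (2) and (3) to eliminate the intermediate $A_{100}(B)$ and $A_{110}(B)$, the three-term identity (1) drops out.

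To pass from unnormalized $\alpha(A_\mu,B)$ to normalized $A_\mu(B) = \alpha(A_\mu,B)/\alpha(A_\mu,A_\mu)$, I would divide each term of the relation by the appropriate $\alpha(A_\mu,A_\mu)$, using the explicit product formula for local densities of diagonal hermitian matrices (Hironaka, recalled earlier in the paper). The ratios $\alpha(A_{\xi''},A_{\xi''})/\alpha(A_\xi,A_\xi)$ evaluate to clean powers of $q$ or to small polynomials in $q$ such as $q+1$, $q(q-1)$, $q^2(q+1)$, $q^3(q-1)$, $q^2(q^2-q+1)$, $q^4$, exactly matching the stated coefficients. Propositions~\ref{proposition1.2} and~\ref{proposition1.3} are invoked to simplify normalized densities with trailing zeros or simultaneous shifts by $(1,1,1)$, particularly for the boundary identities (3), (6), (9), (10) where the indices are small.

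The main obstacle will be the careful bookkeeping of ratios across the ten cases, especially for (1), (7), (8), which intertwine several normalization factors. The standing hypothesis $\lambda,\kappa \geq 3$ guarantees that every tuple $\xi_{a,b}^+$ appearing is already weakly decreasing, so $A_{\xi_{a,b}^+}$ is in canonical form and $\alpha(A_{\xi_{a,b}^+},A_{\xi_{a,b}^+})$ admits a uniform product expression in $\lambda,\kappa$ without boundary corrections; the restriction on $B$ that $\pi^{-1}B$ be integral is exactly the hypothesis needed to apply Theorem~\ref{theorem1.1} in each instance.
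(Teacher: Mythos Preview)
Your approach is correct and matches the paper's: the proof is exactly Theorem~\ref{theorem2.5} specialized to $n=3$ with the appropriate $(\xi,s)$, followed by dividing through by the self-densities $\alpha(A_\mu,A_\mu)$, which the paper supplies explicitly in Lemma~\ref{lemma2.19}. One imprecision worth flagging: for $s=2$ the raw relation from Theorem~\ref{theorem2.5} has five distinct terms (involving $A_{\xi_1 00}, A_{\xi_1 10}, A_{\xi_1 11}, A_{\xi_1 21}, A_{\xi_1 22}$), not three; you must eliminate the $A_{\xi_1 10}$ and $A_{\xi_1 21}$ contributions either by the parity observation $A(B)=0$ when $\val\det A\not\equiv\val\det B\pmod 2$ (as the paper does explicitly in the $n=2$ proof of Corollary~\ref{corollary2.17}) or, equivalently, by substituting the $s=1$ relation $\alpha(A_{\xi_1 10},B)=q^{-6}\alpha(A_{\xi_1 21},B)$, after which those two terms cancel exactly. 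Also, Propositions~\ref{proposition1.2} and~\ref{proposition1.3} are not needed here: the paper cites only Lemma~\ref{lemma2.19}, which already tabulates every $\alpha(A_\mu,A_\mu)$ required, including the small-index boundary cases.
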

 In Corollary \ref{corollary2.20}, we give a similar corollary of Theorem \ref{theorem1.1} in case where $n=4$.

These corollaries can be used to compute normalized representation densities. The most important advantage of this method is that we can reduce the computations of representation densities for $B$ to the ones for $\pi^{-1}B$. For example, consider $A_{300}(A_{832})$. By Corollary \ref{corollary1.3}, Corollary \ref{corollary1.4}, Proposition \ref{proposition1.2}, and Proposition \ref{proposition1.3}, we can compute that
\begin{equation*}\begin{array}{ll}
	A_{300}(A_{832})&=q^2(q+1)A_{311}(A_{832})-q^5A_{322}(A_{832})\\
	&=q^2(q+1)A_{200}(A_{721})-q^5A_{100}(A_{610})\\
	&=q^4(q+1)^2A_{211}(A_{721})-q^5A_{10}(A_{61})\\
	&=q^4(q+1)^2A_{100}(A_{610})-q^5A_{21}(A_{61})\\
	&=q^4(q+1)^2A_{10}(A_{61})-q^5A_{10}(A_{50})\\
	&=\lbrace q^4(q+1)^2-q^5 \rbrace A_{1}(A_5)\\
	&=q^6+q^5+q^4.
	\end{array}
\end{equation*}

Another important advantage of this method is that we do not need to compute all representation densities separately: when we compute a representation density, we can use other representation densities. We refer to Example \ref{example3.4} for this.

In Section \ref{section3}, we apply Theorem \ref{theorem1.1} to compute some arithmetic intersection numbers of special cycles on unitary Rapoport-Zink spaces. In fact, the arithmetic intersection numbers of special cycles on the Drinfelf upper half plane are the main motivation of our Theorem \ref{theorem1.1}. We refer the reader to Example \ref{example3.3} for this. 

Our computational method allows us to test some geometric guesses on the arithmetic intersection numbers of special cycles. In Example \ref{example3.4}, we test our geometric guess by using some computations of representation densities. Using this example, we make Conjecture \ref{conjecture}. This is modelled on the analytic part of \cite{LZ} (in particular \cite[Theorem 4.2.1, Theorem 5.4.1]{LZ}). We believe that we can use our computations to make more precise conjectures on the arithmetic intersection numbers of special cycles. We refer the reader to Remark \ref{remark} for this.

\section{Relations among representation densities}
In this section, we prove some relations among representation densities.

\subsection{Representation densities}
In this subsection, we will recall the definition of representation densities and their formulas. 

We fix a prime $p >2$. Let $F$ be a finite extension of $\BQ_p$ with ring of integers $O_F$ and residue field $\BF_q$. We fix a uniformizer $\pi$ of $O_F$. Let $E$ be a quadratic unramified extension of $F$ with ring of integers $O_E$. Denote by $^*$ the nontrivial Galois automorphism of $E$ over $F$.

We define the set of hermitian matrices
\begin{equation*}
	\begin{array}{l}
	X_n(E)=\lbrace X \in GL_{n}(E) \vert ^tX^*=X \rbrace.
	\end{array}
\end{equation*}
We define $X_n(O_E)=X_n(E) \cap M_{n,n}(O_E)$ and $K_n=GL_n(O_E)$.

For $g \in GL_n(E)$, and $X \in X_n(E)$, we define the group action of $GL_n(E)$ on $X_n(E)$ by $g \cdot X=gX^tg^*$.

For $X \in M_{m,n}(E)$ and $A \in X_m(E)$, we denote by $A[X]=^tX^*AX$.

Let $\Lambda_n^+:=\lbrace \lambda=(\lambda_1, \dots, \lambda_n) \in \BZ^n \vert \lambda_1 \geq \lambda_2 \geq \dots \geq \lambda_n \geq 0 \rbrace$. For $\lambda \in \Lambda_n^+$, we define $A_{\lambda}:=\diag(\pi^{\lambda_1},\pi^{\lambda_2},\dots,\pi^{\lambda_n})$.

Note that the complete set of representatives of $K_n \backslash X_n(O_E)$ is given by the set of diagonal matrices $A_{\lambda}$ for $\lambda=(\lambda_1,\lambda_2,\dots,\lambda_n) \in \Lambda_n^+$.

\begin{definition}
	For $A \in X_m(O_E)$ and $B \in X_n(O_E)$, we define $\alpha(A,B)$ by
	\begin{equation*}
		\alpha(A,B)=\lim_{d \rightarrow \infty} (q^{-d})^{n(2m-n)}\vert \lbrace X \in M_{m,n}(O_E/\pi^dO_E)\vert A[X]\equiv B (\Mod \pi^d)\rbrace \vert.
	\end{equation*}
\end{definition}

Now, we recall an explicit formula for the representation densities from \cite{Hir2}.

For each $\mu \in (\mu_1, \dots, \mu_n) \in \Lambda_n^+$, we denote by
\begin{equation*}
	\begin{array}{l}
		\tilde{\mu}:=(\mu_{1}+1, \dots, \mu_n+1); \\\\
		\vert \mu \vert:=\sum_{i=1}^{n}  \mu_n;\\\\
		n(\mu):=\sum_{i=1}^{n}(i-1)\mu_i; \\\\
		\mu_i':= \vert \lbrace j \vert \mu_j \geq i \rbrace \vert, \text{for }i \geq 1.\\
	\end{array}
\end{equation*}

Also, for $\mu \in \Lambda_n^+$, $\xi \in \Lambda_m^+$, we write
\begin{equation*}
	\langle \xi', \mu' \rangle:=\sum_{i \geq 1} \xi_i'\mu_i'.
\end{equation*}

For $\mu$ and  $\lambda$ in $\Lambda_n^+$, we denote by $\mu \leq \lambda$ if $\mu_i \leq \lambda_i$ for all $1 \leq i \leq n$.

For nonnegative integers $u \geq v \geq 0$, we define
\begin{equation*}
	\left[ \begin{array}{l}
		u\\
		v
	\end{array}\right]:=\dfrac{\prod_{i=1}^u (1-(-q)^{-i})}{\prod_{i=1}^v (1-(-q)^{-i}) \prod_{i=1}^{u-v} (1-(-q)^{-i})}
\end{equation*}

Finally, for $\mu, \lambda \in \Lambda_n^+$, we define
\begin{equation*}
	I_j(\mu, \lambda):=\sum_{i=\mu_{j+1}'}^{\min((\tilde{\lambda})'_{j+1}, \mu_j')}(-q)^{i(2(\tilde{\lambda})'_{j+1}+1-i)/2}\left[ \begin{array}{l}  
		(\tilde{\lambda})_{j+1}'-\mu_{j+1}'\\
		  (\tilde{\lambda})_{j+1}'-i
	\end{array}\middle] \middle[ \begin{array}{l} (\tilde{\lambda})_{j}'-i\\ (\tilde{\lambda})_{j}'-\mu_{j}' \end{array}\right].
\end{equation*}

Now, we can state the following formula for representation densities.

\begin{proposition}\label{proposition2.2}
	(\cite[Theorem II]{Hir2}) For $\lambda \in \Lambda_n^+$ and $\xi \in \Lambda_m^+$ with $m \geq n$, we have
	\begin{equation*}
		\alpha(A_{\xi},A_{\lambda})=\sum_{\substack{\mu \in \Lambda_n^+\\ \mu \leq \tilde{\lambda}}}(-1)^{\vert \mu \vert}(-q)^{-n(\mu)+(n-m-1)\vert \mu \vert+\langle \xi',\mu' \rangle} \prod_{j \geq 1} I_j(\mu,\lambda).
	\end{equation*}
\end{proposition}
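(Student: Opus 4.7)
The plan is to reinterpret $\alpha(A_\xi, A_\lambda)$ geometrically, then stratify by the Cartan invariants of the sublattice cut out by a given representation. Starting from the limit in Definition 2.1, I would first rewrite the density as the normalized Haar-measure volume
\begin{equation*}
\alpha(A_\xi, A_\lambda) \;=\; \int_{\{X \in M_{m,n}(O_E)\,:\, A_\xi[X] = A_\lambda\}} dX,
\end{equation*}
using the standard identification of representation densities with orbital-type integrals on the hermitian space $(O_E^m, A_\xi)$. Concretely, each such $X$ produces an isometric embedding of the hermitian lattice $(O_E^n, A_\lambda)$ into $(O_E^m, A_\xi)$, and conversely every such embedding arises from a $GL_n(O_E)$-orbit of $X$'s.

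Next I would stratify the integration domain by the $GL_m(O_E)$-invariants of the image lattice $L = X \cdot O_E^n \subset O_E^m$. By the classification of hermitian lattices over the unramified quadratic extension $E/F$, the pair $(L \subset O_E^m)$ has a Smith-style invariant which I would encode as a partition $\mu \in \Lambda_n^+$. The compatibility of $A_\xi\vert_L$ with the intrinsic form $A_\lambda$ on $L$ forces $\mu \leq \tilde{\lambda}$, which accounts for the range of summation, while the sign $(-1)^{|\mu|}$ and power $(-q)^{-n(\mu)+(n-m-1)|\mu|+\langle \xi',\mu'\rangle}$ should arise as the product of (i) the local Jacobian of passing from $X$ to the data of $L$ together with a splitting of the embedding, and (ii) the M\"obius-type correction from summing over stratifications of the radical of $A_\xi\vert_L$.

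For a fixed $\mu$ the remaining factor should be a count, in the residue-field quotients, of chains of subspaces compatible with both the $\pi$-adic filtration associated to $\xi$ and the one associated to $\lambda$. I expect this count to factor as $\prod_{j\geq 1} I_j(\mu,\lambda)$, with each $I_j$ counting flags of given dimensions inside a fixed hermitian space over $\BF_{q^2}$, which is exactly the kind of quantity enumerated by products of $q$-binomials $\left[\begin{smallmatrix}u\\ v\end{smallmatrix}\right]$. The Gaussian sum $(-q)^{i(2(\tilde{\lambda})'_{j+1}+1-i)/2}$ inside $I_j$ is the signature of passing through intermediate subspaces of dimension $i$, consistent with hermitian Witt-style formulas over $\BF_{q^2}$.

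The main obstacle will be the third step: establishing the combinatorial identification of the stratum contribution with $\prod_j I_j(\mu,\lambda)$ rigorously. This requires a careful inductive analysis, peeling off the largest part of $\mu$ one at a time and matching the resulting recursion with a $q$-binomial identity of Hironaka type. Equivalently, one may proceed by an induction on $n$, reducing to the case $n=1$ (where the density is elementary) via a suitable integration formula that peels off the last column of $X$; in either approach the bookkeeping of signs and the verification that the final product matches $I_j(\mu,\lambda)$ for the correct range of $i$ is the technical heart of the argument. Once this combinatorial step is in place, summing over $\mu \leq \tilde{\lambda}$ assembles the stated formula.
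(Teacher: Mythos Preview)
The paper does not prove Proposition~2.2 at all: it is quoted verbatim as \cite[Theorem~II]{Hir2} and used as a black box. So there is no ``paper's own proof'' to compare your attempt to. What you have written is not a proof either, but a plan with the decisive step explicitly left open.

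Your geometric stratification idea is reasonable in spirit, but as written it contains no actual argument. The assertions that the Jacobian and M\"obius corrections produce exactly $(-1)^{|\mu|}(-q)^{-n(\mu)+(n-m-1)|\mu|+\langle \xi',\mu'\rangle}$, and that the stratum count factors as $\prod_{j\geq 1} I_j(\mu,\lambda)$ with precisely the summation range $\mu'_{j+1}\leq i\leq \min((\tilde\lambda)'_{j+1},\mu'_j)$ and the weight $(-q)^{i(2(\tilde\lambda)'_{j+1}+1-i)/2}$, are the entire content of the formula, and you have only said they ``should arise'' or that you ``expect'' them. You yourself flag the third step as the main obstacle and defer it to an unspecified induction; until that is carried out there is nothing to evaluate. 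Note also that Hironaka's original proof does not proceed by lattice stratification of the kind you describe: it goes through the theory of local zeta functions attached to the prehomogeneous space of hermitian forms, their functional equations, and an explicit evaluation of spherical functions. If you want to give a self-contained proof along your lines you would essentially be rederiving the Hall--Littlewood-type combinatorics underlying Hironaka's spherical function computation, which is a substantial project and not a paragraph.
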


\subsection{Relations among representation densities} 

In this subsection, we will prove some relations among representation densities.

\begin{definition} We fix an integer $n \geq 1$.
For integers $1 \leq s \leq n$, $0 \leq i \leq s$ we define $d_{n,s,i}$ by the coefficients of the polynomial
\begin{equation*}\begin{array}{ll}
	\sum_{i=0}^s d_{n,s,i}X^i\\\\
	=(1-(-q)^{-n}X)(1-(-q)^{-n+1}X)\dots(1-(-q)^{-n+s-1}X)
	\end{array}
\end{equation*}

For example, $d_{n,k,0}=1$ for any $k$, $d_{n,1,1}=-\dfrac{1}{(-q)^{n}}$, $d_{n,2,1}=-\dfrac{(-q)+1)}{(-q)^n}$, $d_{n,2,2}=\dfrac{1}{(-q)^{2n-1}}$.
\end{definition}

\begin{definition} Let $s$ and $n$ be integers such that $1 \leq s \leq n$.
	\begin{enumerate}
	\item We define the subset $\Lambda_{n,s}^+$ of $\Lambda_n^+$ as
	\begin{equation*}
		\Lambda_{n,s}^+=\lbrace \xi=(\xi_1,\dots,\xi_n) \in \Lambda_n^+ \text{  }\vert \text{  }\xi_{n-s} \geq 1, \xi_{n-s+1}=\dots=\xi_{n}=0 \rbrace.
	\end{equation*}

\item 	For $\xi \in \Lambda_{n,s}^+$ and $a,b \geq 0$ such that $a+b \leq s$, we write $\xi_{a,b}^+$ for the element in $\Lambda_n^+$ representing
\begin{equation*}
	\xi_{a,b}^+:=(\xi_1,\dots,\xi_{n-s},\overset{a}{\overbrace{2,\dots,2}},\overset{b}{\overbrace{1,\dots,1}},\overset{s-a-b}{\overbrace{0,\dots,0}})
\end{equation*}
In other words, we replace $a$ $0$s by $a$ $2$s, $b$ $0$s by $b$ $1$s.

For example, for $\xi=(4,3,0,0) \in \Lambda_{4,2}^+$, we have that
\begin{equation*}
	\begin{array}{l}
		\xi_{0,1}^+=(4,3,1,0)\\
		\xi_{1,1}^+=(4,3,2,1)\\
		\xi_{2,0}^+=(4,3,2,2),
	\end{array}
\end{equation*}
and for $\xi=(4,1,0,0)\in \Lambda_{4,2}^+$, we have,
\begin{equation*}
	\begin{array}{l}
		\xi_{0,1}^+=(4,1,1,0)\\
		\xi_{1,1}^+=(4,2,1,1)\\
		\xi_{2,0}^+=(4,2,2,1).
	\end{array}
\end{equation*}
\end{enumerate}
\end{definition}

\quad\\

Now, we can state our main theorem as follows.

\begin{theorem}\label{theorem2.5}
	Let $s$ and $n$ be integers such that $1 \leq s \leq n$, and let $\xi \in \Lambda_{n,s}^+$. Then, for any $B \in X_n(O_E)$ such that $\pi^{-1}B$ is also integral (i.e., the representative of $B$ in $K_n\backslash X_n(O_E)$ is $A_{\lambda}$ for some $\lambda \in \Lambda_n^{+}$ such that $\lambda \geq (1,\dots,1)$), we have the following equality.
	\begin{equation*}
		\begin{array}{l}
			\mathlarger{\mathlarger{\sum_{i=0}^{s} d_{n,s,i}\alpha(A_{\xi_{0,i}^+},B)-\dfrac{(-1)^s}{(-q)^{ns}}\sum_{i=0}^{s}d_{n,s,i}\alpha(A_{\xi_{i,s-i}^+},B)=0}}.
		\end{array}
	\end{equation*}
\end{theorem}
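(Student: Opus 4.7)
The plan is to substitute Hironaka's explicit formula (Proposition \ref{proposition2.2}) into each representation density appearing in the identity and reduce the statement to a polynomial identity in $(-q)^{\mu'_1}$ and $(-q)^{\mu'_2}$. Writing $B = A_\lambda$ with $\lambda \geq (1,\ldots,1)$ gives $\tilde{\lambda} \geq (2,\ldots,2)$, so $\tilde{\lambda}'_1 = \tilde{\lambda}'_2 = n$ and the sum in Hironaka's formula runs over all $\mu \in \Lambda_n^+$ with $\mu \leq \tilde{\lambda}$. Since every $A_{\xi_{a,b}^+}$ is of size $n \times n$, Proposition \ref{proposition2.2} with $m = n$ gives
\begin{equation*}
\alpha(A_{\xi_{a,b}^+}, A_\lambda) = \sum_{\mu \leq \tilde{\lambda}} (-1)^{|\mu|}(-q)^{-n(\mu) - |\mu|}(-q)^{\langle (\xi_{a,b}^+)', \mu' \rangle}\prod_{j \geq 1} I_j(\mu, \lambda),
\end{equation*}
so the full dependence on $(a,b)$ sits in the single factor $(-q)^{\langle (\xi_{a,b}^+)', \mu' \rangle}$.

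A direct computation of conjugate partitions gives $(\xi_{0,i}^+)'_1 = \xi'_1 + i$ and $(\xi_{0,i}^+)'_k = \xi'_k$ for $k \geq 2$, while $(\xi_{i,s-i}^+)'_1 = \xi'_1 + s$, $(\xi_{i,s-i}^+)'_2 = \xi'_2 + i$, and $(\xi_{i,s-i}^+)'_k = \xi'_k$ for $k \geq 3$. Hence
\begin{equation*}
\langle (\xi_{0,i}^+)', \mu' \rangle = \langle \xi', \mu' \rangle + i\mu'_1, \qquad \langle (\xi_{i,s-i}^+)', \mu' \rangle = \langle \xi', \mu' \rangle + s\mu'_1 + i\mu'_2.
\end{equation*}
Substituting back, interchanging the orders of summation, and recognizing $\sum_{i=0}^{s} d_{n,s,i} X^i = P(X) := \prod_{j=0}^{s-1}(1 - (-q)^{-n+j} X)$, the left-hand side of the theorem collapses to
\begin{equation*}
\sum_{\mu \leq \tilde{\lambda}} C_\mu (-q)^{\langle \xi', \mu' \rangle}\left[P\bigl((-q)^{\mu'_1}\bigr) - \frac{(-1)^s}{(-q)^{ns}}(-q)^{s\mu'_1} P\bigl((-q)^{\mu'_2}\bigr)\right],
\end{equation*}
where $C_\mu := (-1)^{|\mu|}(-q)^{-n(\mu)-|\mu|}\prod_{j \geq 1} I_j(\mu, \lambda)$ is independent of $\xi$ and $i$.

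The zeros of $P$ are exactly $\{(-q)^{n-j} : 0 \leq j \leq s-1\}$, so $P((-q)^k) = 0$ precisely when $k \in \{n-s+1, \ldots, n\}$. Because $\mu'_1 \geq \mu'_2$, any $\mu$ with $\mu'_2 \geq n-s+1$ has both bracket terms vanish and contributes nothing. The remaining partitions have $\mu'_2 \leq n - s$ and split into those with $\mu'_1 \leq n-s$ (both bracket terms contribute) and those with $\mu'_1 \in \{n-s+1, \ldots, n\}$ (only the second term contributes). Fixing the tail $(\mu'_3, \mu'_4, \ldots)$, the $(\mu'_1, \mu'_2)$-dependence enters through $I_1(\mu, \lambda)$ --- which under the hypotheses $\tilde{\lambda}'_1 = \tilde{\lambda}'_2 = n$ reduces to a double $q$-binomial sum over $i \in [\mu'_2, \mu'_1]$ --- through $I_2(\mu, \lambda)$ by its $\mu'_2$ argument, and through the factor $(-q)^{-n(\mu) - |\mu|}$.

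The main obstacle is the resulting combinatorial identity: for each fixed tail $(\mu'_3, \mu'_4, \ldots)$, the weighted sum over admissible $(\mu'_1, \mu'_2)$ of $C_\mu$ times the bracket must vanish. The strategy is to expand $I_1(\mu, \lambda)$ via its definition, collect all powers of $(-q)$, and recognize the inner double sum as a $q$-Chu-Vandermonde (or Saalsch\"utz-type) identity for $q$-binomial coefficients, where the zeros $(-q)^{n-s+1}, \ldots, (-q)^n$ of $P$ should match the vanishing ranges imposed by these $q$-binomials and the normalization $(-1)^s/(-q)^{ns}$ should emerge naturally. The small cases $n = 2, 3, 4$ can be verified directly (cf.\ Corollaries \ref{corollary1.3} and \ref{corollary1.4}), both providing a sanity check and suggesting the precise form of the $q$-identity needed in general.
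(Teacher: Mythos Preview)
Your reduction to Hironaka's formula and the isolation of the $(a,b)$-dependence in the factor $(-q)^{\langle (\xi_{a,b}^+)',\mu'\rangle}$ is correct and matches the paper's opening moves exactly. The expression of the theorem as a sum over $\mu$ of $C_\mu(-q)^{\langle\xi',\mu'\rangle}$ times the bracket $P((-q)^{\mu'_1}) - (-1)^s(-q)^{-ns}(-q)^{s\mu'_1}P((-q)^{\mu'_2})$ is also right.

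However, there are two issues. First, your grouping is coarser than it needs to be: you fix only $(\mu'_3,\mu'_4,\ldots)$ and then face a double sum over $(\mu'_1,\mu'_2)$ in which $I_2(\mu,\lambda)$ still varies. The paper instead fixes the entire tail $(\mu'_2,\mu'_3,\ldots)$ --- equivalently, writes $\mu = (\overline{\mu_l},1^k,0^{l-k})$ with $\overline{\mu_l}$ having all parts $\geq 2$ --- so that $I_j$ for $j\geq 2$ is constant and only $I_1$ and the bracket depend on the remaining variable $k$ (equivalently $\mu'_1 = n-l+k$). This is Lemma~\ref{lemma2.8}, and it is the right level of refinement: the identity already vanishes group by group at this finer level.

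Second, and more seriously, your final paragraph is a plan rather than a proof. You say the inner sum ``should'' be recognizable as a $q$-Chu--Vandermonde or Saalsch\"utz identity, but you do not identify which one or carry out the verification. In fact the required identity (equation~\eqref{eq2.2.4} in the paper) is not a direct instance of a classical summation: the paper first proves a product formula $\sum_{j=0}^k(-q)^{(2l+1-j)j/2}\genfrac[]{0pt}{}{k}{j} = \prod_{j=0}^{k-1}(1+(-q)^{l-j})$ (Lemma~\ref{lemma2.10}) to collapse $I_1$, and then establishes the remaining single-variable sum vanishes via an explicit telescoping argument (Lemma~\ref{lemma2.11} and Claim~\ref{claim2.12}), introducing auxiliary quantities $L_{t,k}, R_{t,k}, B_{t,1}, B_{t,2}$ and checking several recursions by hand. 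This is the substantive combinatorial core of the theorem, and your proposal does not supply it.
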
		\quad\\

	\begin{example}
		\begin{enumerate}
		
	\item For any $B$ in $X_2(O_E)$ such that $\pi^{-1}B$ is integral, we have that
	\begin{equation*}\begin{array}{l}
			\lbrace \alpha(A_{10},B)-\dfrac{1}{(-q)^2}\alpha(A_{11},B)\rbrace\\
			 +\dfrac{1}{(-q)^2}\lbrace \alpha(A_{11},B)-\dfrac{1}{(-q)^2}\alpha(A_{21},B)\rbrace=0.
		
		\end{array}
	\end{equation*}
Therefore, $\alpha(A_{10},B)=\dfrac{1}{(-q)^{4}}\alpha(A_{21},B)$.
\quad\\
	
	\item For any $B$ in $X_2(O_E)$ such that $\pi^{-1}B$ is integral,
	\begin{equation*}\begin{array}{l}
		\alpha(A_{00},B)-\dfrac{(-q)+1}{(-q)^2}\alpha(A_{10},B)+\dfrac{1}{(-q)^3}\alpha(A_{11},B)\\\\
		=\dfrac{1}{(-q)^4}\lbrace 	\alpha(A_{11},B)-\dfrac{(-q)+1}{(-q)^2}\alpha(A_{21},B)+\dfrac{1}{(-q)^3}\alpha(A_{22},B) \rbrace
		\end{array}
	\end{equation*}
\quad\\
	
	\item For $s=1$ and $\xi=(\overline{\xi},0) \in \Lambda_{n,1}^+$ and for any $B$ such that $\pi^{-1}B$ is integral, we have that
\begin{equation*}\begin{array}{l}
		\lbrace \alpha(A_{\overline{\xi},0},B)-\dfrac{1}{(-q)^n}\alpha(A_{\overline{\xi},1},B)\rbrace\\
		+\dfrac{1}{(-q)^n}\lbrace \alpha(A_{\overline{\xi},1},B)-\dfrac{1}{(-q)^n}\alpha(A_{\overline{\xi},2},B)\rbrace=0\\
		
	\end{array}
\end{equation*}
Therefore, $\alpha(A_{\overline{\xi},0},B)=\dfrac{1}{(-q)^{2n}}\alpha(A_{\overline{\xi},2},B)$
	\end{enumerate}
	\end{example}

To prove Theorem \ref{theorem2.5}, we need to use Proposition \ref{proposition2.2}. Let us introduce the following definition.

\begin{definition} Let $l$ and $n$ be integers such that $0 \leq l \leq n$.
	\begin{enumerate}
		\item We define the subset $\Lambda_{n-l}^{2+}$ of $\Lambda_{n-l}^+$ as
		\begin{equation*}
			\Lambda_{n-l}^{2+}=\lbrace \overline{\mu}=(\overline{\mu}_1,\dots,\overline{\mu}_{n-l}) \in \Lambda_{n-l}^+ \text{ }\vert \text{ } \overline{\mu}_{i} \geq 2 \rbrace.
		\end{equation*}
		
		\item For integers $0 \leq k \leq l$ and an element $\overline{\mu_l}=(\overline{\mu}_1,\dots,\overline{\mu}_{n-l}) \in \Lambda_{n-l}^{2+}$ (i.e. $\mu_{l,i} \geq 2$), we define
		\begin{equation*}
			\mu_{l,k}:=(\overline{\mu_l},\overset{k}{\overbrace{1,\dots,1}},\overset{l-k}{\overbrace{0,\dots,0}}) \in \Lambda_n^+.
		\end{equation*}
	\end{enumerate}
\end{definition}
\quad\\

By Proposition \ref{proposition2.2}, we have that
	\begin{equation*}
		\begin{array}{l}
		\alpha(A_{\xi_{a,b}^+},B)=\alpha(A_{\xi_{a,b}^+},A_{\lambda})\\\\
		=\mathlarger{\sum}_{\substack{\mu \in \Lambda_n^+\\ \mu \leq \tilde{\lambda}}}(-1)^{\vert \mu \vert}(-q)^{-n(\mu)-\vert \mu \vert+\langle (\xi_{a,b}^+)',\mu' \rangle} \prod_{j \geq 1} I_j(\mu,\lambda).
		\end{array}
	\end{equation*}

Let us further decompose this as follows.

	\begin{equation}\label{eq2.2.1}\begin{array}{l}
\alpha(A_{\xi_{a,b}^+},A_{\lambda})=\mathlarger{\sum}_{\substack{\mu \in \Lambda_n^+\\ \mu \leq \tilde{\lambda}}}(-1)^{\vert \mu \vert}(-q)^{-n(\mu)-\vert \mu \vert+\langle (\xi_{a,b}^+)',\mu' \rangle} \prod_{j \geq 1} I_j(\mu,\lambda)\\\\

=\mathlarger{\sum}_{l=0}^{n}\mathlarger{\sum}_{\substack{\overline{\mu}_l \in \Lambda_{n-l}^{2+}\\ \overline{\mu}_l \leq \tilde{\lambda}}}\mathlarger{\sum}_{k=0}^{l}

(-1)^{\vert \mu_{k,l} \vert}(-q)^{-n(\mu_{k,l})-\vert \mu_{k,l} \vert+\langle (\xi_{a,b}^+)',\mu_{k,l}' \rangle} \prod_{j \geq 1} I_j(\mu_{k,l},\lambda).\\
\end{array}
\end{equation}

Here, $\overline{\mu}_l \leq \tilde{\lambda}$ means that $\overline{\mu}_1 \leq \tilde{\lambda_1}=\lambda_1+1$, $\dots$, $\overline{\mu}_{n-l} \leq \tilde{\lambda}_{n-l}=\lambda_{n-l}+1$. The condition that $\mu_{k,l} \leq \tilde{\lambda}$ automatically follows from this since $\lambda_i+1$ is greater than $1$ for all $i$.

Now, let us introduce the following definition.
\begin{definition}
	For $\overline{\mu_l}=(\overline{\mu}_1,\dots,\overline{\mu}_{n-l}) \in \Lambda_{n-l}^{2+}$, we define
	\begin{equation*}\begin{array}{l}
		\alpha(A_{\xi_{a,b}^+},A_{\lambda};\overline{\mu_l})\\
		:=\mathlarger{\sum}_{k=0}^{l}
		(-1)^{\vert \mu_{k,l} \vert}(-q)^{-n(\mu_{k,l})-\vert \mu_{k,l} \vert+\langle (\xi_{a,b}^+)',\mu_{k,l}' \rangle} \prod_{j \geq 1} I_j(\mu_{k,l},\lambda).
		\end{array}
	\end{equation*}
\end{definition}

\begin{lemma}\label{lemma2.8}
For $\overline{\mu_l}=(\overline{\mu}_1,\dots,\overline{\mu}_{n-l}) \in \Lambda_{n-l}^{2+}$, we have that 
\begin{equation*}
	\sum_{i=0}^{s} d_{n,s,i}\alpha(A_{\xi_{0,i}^+},A_{\lambda};\overline{\mu_l})-\dfrac{(-1)^s}{(-q)^{ns}}\sum_{i=0}^{s}d_{n,s,i}\alpha(A_{\xi_{i,s-i}^+},A_{\lambda};\overline{\mu_l})=0.
\end{equation*}
\end{lemma}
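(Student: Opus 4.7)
The idea is to extract the $(a,b)$-dependence of $\alpha(A_{\xi_{a,b}^+},A_\lambda;\overline{\mu_l})$ explicitly and then use the defining polynomial identity for $d_{n,s,i}$ to reduce the lemma to a single $(-q)$-series identity.

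\emph{Step 1 (Isolate the $(a,b)$-dependence).} Apply Proposition~\ref{proposition2.2} to write each summand of $\alpha(A_{\xi_{a,b}^+},A_\lambda;\overline{\mu_l})$ as
\begin{equation*}
(-1)^{|\mu_{k,l}|}\,(-q)^{-n(\mu_{k,l})-|\mu_{k,l}|+\langle (\xi_{a,b}^+)',\mu_{k,l}'\rangle}\,\prod_{j\ge 1}I_j(\mu_{k,l},\lambda).
\end{equation*}
The sign and the factors $I_j(\mu_{k,l},\lambda)$ do not involve $(a,b)$; the parameters $a,b$ occur only through the pairing $\langle (\xi_{a,b}^+)',\mu_{k,l}'\rangle$.

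\emph{Step 2 (Compute the pairing).} From the definitions, $(\xi_{a,b}^+)_1' = \xi_1'+a+b$, $(\xi_{a,b}^+)_2' = \xi_2'+a$, and $(\xi_{a,b}^+)_j' = \xi_j'$ for $j\ge 3$; while $\mu_{k,l,1}' = (n-l)+k$ and $\mu_{k,l,j}' = \overline{\mu_l}_j'$ for $j\ge 2$. The crucial observation is that the hypothesis $\overline{\mu_l}\in\Lambda_{n-l}^{2+}$ forces $\overline{\mu_l}_2' = n-l$, so that
\begin{equation*}
\langle (\xi_{a,b}^+)',\mu_{k,l}'\rangle = C(\xi,\overline{\mu_l},k) + (2a+b)(n-l) + (a+b)k,
\end{equation*}
where $C$ is independent of $(a,b)$. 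Consequently
\begin{equation*}
\alpha(A_{\xi_{a,b}^+},A_\lambda;\overline{\mu_l}) = (-q)^{(2a+b)(n-l)}\sum_{k=0}^{l} T_k\,(-q)^{(a+b)k}
\end{equation*}
for some coefficients $T_k$ independent of $(a,b)$.

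\emph{Step 3 (Apply the generating identity for $d_{n,s,i}$).} Substituting the factorization, swapping the sums over $i$ and $k$, and using $\sum_i d_{n,s,i}X^i = \prod_{j=0}^{s-1}(1-(-q)^{-n+j}X)$ evaluated at $X=(-q)^{n-l+k}$ and at $X=(-q)^{n-l}$, the identity of the lemma reduces to
\begin{equation*}
\sum_{k=0}^{l} T_k\left[\prod_{j=0}^{s-1}\bigl(1-(-q)^{k+j-l}\bigr) - (-1)^s(-q)^{s(k-l)}\prod_{j=0}^{s-1}\bigl(1-(-q)^{j-l}\bigr)\right] = 0.
\end{equation*}

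\emph{Step 4 (The main obstacle).} It remains to verify this final $(-q)$-series identity. Up to a $k$-independent scalar, $T_k = (-1)^k(-q)^{k(l-s-1)-k(k-1)/2}\,I_1(\mu_{k,l},\lambda)\prod_{j\ge 2}I_j(\overline{\mu_l},\lambda)$. The hypothesis that $\pi^{-1}B$ is integral, i.e.\ $\lambda\ge(1,\ldots,1)$, yields $(\tilde\lambda)_1'=(\tilde\lambda)_2'=n$, so
\begin{equation*}
I_1(\mu_{k,l},\lambda) = \sum_{u=l-k}^{l}(-q)^{(n-u)(n+u+1)/2}\left[\begin{array}{c}l \\ u\end{array}\right]\left[\begin{array}{c}u \\ l-k\end{array}\right],
\end{equation*}
depending on $k$ only through $l-k$. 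The plan is then to re-index with $m=l-k$, swap to an outer sum over the index $u$, and prove termwise cancellation by matching the telescoping of the two products in the bracket against a $(-q)$-analogue of the Chu--Vandermonde identity for the Gaussian binomials $\left[\begin{array}{c}u\\m\end{array}\right]$. This final combinatorial cancellation---balancing the sign $(-1)^k$, the quadratic factor $(-q)^{-k(k-1)/2}$, and the interaction between the two products in the bracket---is the step I expect to require the most delicate bookkeeping, and it is the main technical obstacle.
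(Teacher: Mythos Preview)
Your reduction in Steps~1--3 and the opening of Step~4 (the formulas for $T_k$ and for $I_1(\mu_{k,l},\lambda)$ under the hypothesis $\lambda\ge(1,\dots,1)$) are correct and match the paper line by line; this is precisely how the paper reduces Lemma~\ref{lemma2.8} to the combinatorial identity it then records as Lemma~\ref{lemma2.11}.

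The divergence is in your plan for that final identity. You propose to swap to an outer sum over $u$ and prove termwise cancellation via a Chu--Vandermonde-type argument. This does not work: for $u=0$ the inner sum over $m$ has the single term $m=0$ (equivalently $k=l$), and the bracket there is
\[
\prod_{j=0}^{s-1}\bigl(1-(-q)^{j}\bigr)-(-1)^s\prod_{j=0}^{s-1}\bigl(1-(-q)^{j-l}\bigr)
= -(-1)^s\prod_{j=0}^{s-1}\bigl(1-(-q)^{j-l}\bigr),
\]
since the $j=0$ factor kills the first product. For $l\ge s$ this is nonzero, so the $u=0$ contribution does not vanish and the sum over $u$ cannot cancel termwise.

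The paper proceeds in the opposite order: it first collapses the sum inside $I_1$ (your $u$-sum) to closed form via the $q$-binomial theorem, obtaining
\[
I_1(\mu_{k,l},\lambda)=(-q)^{(n-l)(n+l+1)/2}\left[\begin{array}{c}l\\k\end{array}\right]\prod_{j=0}^{k-1}\bigl(1+(-q)^{l-j}\bigr)
\]
(Lemma~\ref{lemma2.10}), and only then handles the remaining single sum over $k$ by an explicit telescoping computation (Lemma~\ref{lemma2.11} and Claim~\ref{claim2.12}). So the missing idea is to evaluate the $u$-sum first rather than swap it outward; once $I_1$ is in the product form above, the $k$-sum becomes tractable, though the telescoping is still intricate.
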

\begin{proof}
Note that
\begin{equation*}
	\begin{array}{l}
		\vert \mu_{k,l} \vert=\vert \overline{\mu}_l \vert + k,\\
		n(\mu_{k,l})=n(\overline{\mu}_l)+(n-l)k+\dfrac{k(k-1)}{2}.
	\end{array}
\end{equation*}

Also, note that for $i \geq 2$, we have
\begin{equation*}
	(\mu_{0,l})'_i=(\mu_{1,l})'_i=\dots=(\mu_{l,l})'_i.
\end{equation*}

Recall that
\begin{equation*}\begin{array}{l}
	I_j(\mu_{k,l}, \lambda)\\
	=\mathlarger{\sum}_{i=(\mu_{k,l})_{j+1}'}^{\min((\tilde{\lambda})'_{j+1}, (\mu_{k,l})_j')}(-q)^{i(2(\tilde{\lambda})'_{j+1}+1-i)/2}\left[ \begin{array}{l}  
		(\tilde{\lambda})_{j+1}'-(\mu_{k,l})_{j+1}'\\
		(\tilde{\lambda})_{j+1}'-i
	\end{array}\middle] \middle[ \begin{array}{l} (\tilde{\lambda})_{j}'-i\\ (\tilde{\lambda})_{j}'-(\mu_{k,l})_{j}' \end{array}\right].
\end{array}
\end{equation*}

This depends only on $(\mu_{k,l})_{j}'$ and $(\mu_{k,l})_{j+1}'$. Therefore, for $j \geq 2$, $I_j(\mu_{k,l}, \lambda)$ are the same for all $k$.

We can therefore write that
	\begin{equation*}\begin{array}{l}
		\alpha(A_{\xi_{a,b}^+},A_{\lambda};\overline{\mu_l})\\
		=\mathlarger{\sum}_{k=0}^{l}
		(-1)^{\vert \mu_{k,l} \vert}(-q)^{-n(\mu_{k,l})-\vert \mu_{k,l} \vert+\langle (\xi_{a,b}^+)',\mu_{k,l}' \rangle} \prod_{j \geq 1} I_j(\mu_{k,l},\lambda)\\\\
		=
		(-1)^{\vert \overline{\mu_l} \vert}(-q)^{-n(\overline{\mu_l})-\vert \overline{\mu_l} \vert} \prod_{j \geq 2} I_j(\mu_{k,l},\lambda)\\
		\times \mathlarger{\sum}_{k=0}^{l}(-1)^{k}(-q)^{-(n-l)k-k(k+1)/2+\langle (\xi_{a,b}^+)',\mu_{k,l}' \rangle} I_1(\mu_{k,l},\lambda)
	\end{array}
\end{equation*}

Therefore, it suffices to prove that 
\begin{equation}\label{eq2.2.2}\begin{array}{l}
\mathlarger{\sum}_{i=0}^{s} d_{n,s,i} \mathlarger{\sum}_{k=0}^{l}(-1)^{k}(-q)^{-(n-l)k-k(k+1)/2+\langle (\xi_{0,i}^+)',\mu_{k,l}' \rangle} I_1(\mu_{k,l},\lambda)\\
-\dfrac{(-1)^s}{(-q)^{ns}}\mathlarger{\sum}_{i=0}^{s}d_{n,s,i} \mathlarger{\sum}_{k=0}^{l}(-1)^{k}(-q)^{-(n-l)k-k(k+1)/2+\langle (\xi_{i,s-i}^+)',\mu_{k,l}' \rangle} I_1(\mu_{k,l},\lambda)\\\\
=0.
\end{array}
\end{equation}

Note that
\begin{equation*}
	\begin{array}{l}
		(\xi_{a,b}^+)'_1=\xi'_1+a+b=n-s+a+b,\\
		(\xi_{a,b}^+)'_2=\xi'_2+a,\\
		(\xi_{a,b}^+)'_j=\xi'_j, \quad \forall j \geq 3,
	\end{array}
\end{equation*}
and 
\begin{equation*}
	\begin{array}{l}
		(\mu_{k,l})'_1=n-l+k,\\
		(\mu_{k,l})'_2=n-l.\\
	\end{array}
\end{equation*}
Also, note that for $j \geq 2$, $(\mu_{k,l})'_j$ are the same for all $k$

Therefore, we have that
\begin{equation*}\begin{array}{ll}
\mathlarger{\sum}_{i=0}^{s} d_{n,s,i}(-q)^{\langle (\xi_{0,i}^+)',\mu_{k,l}' \rangle}&=\mathlarger{\sum}_{i=0}^{s} d_{n,s,i}(-q)^{(n-s+i)(n-l+k)+(\xi_2')(n-l)+\sum_{j=3}^{\infty}\xi_j'(\mu_{k,l})'_j}\\\\
&=(-q)^{(n-s)(n-l+k)+(\xi_2')(n-l)+\sum_{j=3}^{\infty}\xi_j'(\mu_{k,l})'_j}\\
&\times \mathlarger{\prod}_{i=0}^{s-1}(1-(-q)^{-l+k+i}).
\end{array}
\end{equation*}

Also,
\begin{equation*}\begin{array}{ll}
		\mathlarger{\sum}_{i=0}^{s} d_{n,s,i}(-q)^{\langle (\xi_{i,s-i}^+)',\mu_{k,l}' \rangle}&=\mathlarger{\sum}_{i=0}^{s} d_{n,s,i}(-q)^{n(n-l+k)+(\xi_2'+i)(n-l)+\sum_{j=3}^{\infty}\xi_j'(\mu_{k,l})'_j}\\\\
		&=(-q)^{n(n-l+k)+(\xi_2')(n-l)+\sum_{j=3}^{\infty}\xi_j'(\mu_{k,l})'_j}\\
		&\times \mathlarger{\prod}_{i=0}^{s-1}(1-(-q)^{-l+i})
	\end{array}
\end{equation*}

These imply that the equation \eqref{eq2.2.2} is equivalent to
\begin{equation}\label{eq2.2.3}\begin{array}{l}
	\mathlarger{\sum}_{k=0}^{l}(-1)^{k}(-q)^{(n-s)k-(n-l)k-k(k+1)/2} \mathlarger{\prod}_{i=0}^{s-1}(1-(-q)^{-l+k+i}) I_1(\mu_{k,l},\lambda)\\
		-\dfrac{(-1)^s}{(-q)^{ns}}\mathlarger{\sum}_{k=0}^{l}(-1)^{k}(-q)^{s(n-l+k)+(n-s)k-(n-l)k-k(k+1)/2}\mathlarger{\prod}_{i=0}^{s-1}(1-(-q)^{-l+i}) I_1(\mu_{k,l},\lambda)\\\\
		=0.\\\\
		
		\Longleftrightarrow\\
		\mathlarger{\sum}_{k=0}^{l}(-1)^{k}(-q)^{(l-s)k-k(k+1)/2}I_1(\mu_{k,l},\lambda)\\
		\times \lbrace \mathlarger{\prod}_{i=0}^{s-1}(1-(-q)^{-l+k+i}) -(-1)^s(-q)^{s(-l+k)}\mathlarger{\prod}_{i=0}^{s-1}(1-(-q)^{-l+i})\rbrace =0.
	\end{array}
\end{equation}

We have that
\begin{equation*}\begin{array}{l}
		I_1(\mu_{k,l}, \lambda)\\
		=\mathlarger{\sum}_{i=(\mu_{k,l})_{2}'}^{\min((\tilde{\lambda})'_{2}, (\mu_{k,l})_1')}(-q)^{i(2(\tilde{\lambda})'_{2}+1-i)/2}\left[ \begin{array}{l}  
			(\tilde{\lambda})_{2}'-(\mu_{k,l})_{2}'\\
			(\tilde{\lambda})_{2}'-i
		\end{array}\middle] \middle[ \begin{array}{l} (\tilde{\lambda})_{1}'-i\\ (\tilde{\lambda})_{1}'-(\mu_{k,l})_{1}' \end{array}\right].
	\end{array}
\end{equation*}
Since we assume that $\lambda \geq (1,1,\dots,1)$, we have that
\begin{equation*}
	\begin{array}{l}
		(\tilde{\lambda})_2'=(\tilde{\lambda})_1'=n.
		\end{array}
\end{equation*}
Therefore,
\begin{equation*}\begin{array}{ll}
		I_1(\mu_{k,l}, \lambda)
		&=\mathlarger{\sum}_{i=n-l}^{n-l+k}(-q)^{i(2n+1-i)/2}\left[ \begin{array}{c}  
			l\\
			n-i
		\end{array}\middle] \middle[ \begin{array}{c} n-i\\ l-k \end{array}\right]\\\\
	
	&=\mathlarger{\sum}_{j=0}^{k}(-q)^{(j+n-l)(n+l+1-j)/2}\left[ \begin{array}{c}  
		l\\
		l-j
	\end{array}\middle] \middle[ \begin{array}{c} l-j\\ l-k \end{array}\right]\\\\
&=(-q)^{(n-l)(n+l+1)/2}\mathlarger{\sum}_{j=0}^{k}(-q)^{(2l+1-j)j/2}\left[ \begin{array}{c}  
	l\\
	l-j
\end{array}\middle] \middle[ \begin{array}{c} l-j\\ l-k \end{array}\right]\\\\
	\end{array}
\end{equation*}

This implies that the equation \eqref{eq2.2.3} is equivalent to

\begin{equation}\label{eq2.2.4}\begin{array}{l}
		\mathlarger{\sum}_{k=0}^{l}(-1)^{k}(-q)^{(l-s)k-k(k+1)/2}\mathlarger{\sum}_{j=0}^{k}(-q)^{(2l+1-j)j/2}\left[ \begin{array}{c}  
			l\\
			l-j
		\end{array}\middle] \middle[ \begin{array}{c} l-j\\ l-k \end{array}\right]\\
		\times \lbrace \mathlarger{\prod}_{i=0}^{s-1}(1-(-q)^{-l+k+i}) -(-1)^s(-q)^{s(-l+k)}\mathlarger{\prod}_{i=0}^{s-1}(1-(-q)^{-l+i})\rbrace =0.
	\end{array}
\end{equation}

This equality will be proved separately in Lemma \ref{lemma2.11}.
\end{proof}

\begin{lemma}\label{lemma2.10}
	\begin{equation*}
		\mathlarger{\sum}_{j=0}^{k}(-q)^{(2l+1-j)j/2}\left[ \begin{array}{c}  
			l\\
			l-j
		\end{array}\middle] \middle[ \begin{array}{c} l-j\\ l-k \end{array}\middle]=\middle[ \begin{array}{c} l\\ k \end{array}\right]\mathlarger{\prod}_{j=0}^{k-1}(1+(-q)^{l-j}).
	\end{equation*}
	
	\end{lemma}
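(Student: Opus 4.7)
The plan is to reduce the identity to a single application of the classical $q$-binomial theorem, in two elementary steps.

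First I would factor $\left[\begin{array}{l}l\\k\end{array}\right]$ out of the left-hand side via the trinomial revision
\begin{equation*}
\left[\begin{array}{l}l\\l-j\end{array}\right]\left[\begin{array}{l}l-j\\l-k\end{array}\right]=\left[\begin{array}{l}l\\k\end{array}\right]\left[\begin{array}{l}k\\j\end{array}\right],
\end{equation*}
which is immediate from the product formula defining $\left[\,\cdot\,\right]$; both sides expand to
\begin{equation*}
\frac{\prod_{i=1}^{l}(1-(-q)^{-i})}{\prod_{i=1}^{j}(1-(-q)^{-i})\prod_{i=1}^{l-k}(1-(-q)^{-i})\prod_{i=1}^{k-j}(1-(-q)^{-i})}.
\end{equation*}
The claim then reduces to the single-variable identity
\begin{equation*}
\sum_{j=0}^{k}(-q)^{(2l+1-j)j/2}\left[\begin{array}{l}k\\j\end{array}\right]=\prod_{j=0}^{k-1}\bigl(1+(-q)^{l-j}\bigr).
\end{equation*}

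Second, I would invoke the $q$-binomial theorem $\prod_{i=0}^{k-1}(1+ty^{i})=\sum_{j=0}^{k}y^{j(j-1)/2}\binom{k}{j}_{y}t^{j}$ with base $y=-q$, where $\binom{k}{j}_{y}$ denotes the $q$-binomial with base $y$. Since the paper's bracket corresponds to base $(-q)^{-1}$, I would first verify the base change
\begin{equation*}
\binom{k}{j}_{-q}=(-q)^{j(k-j)}\left[\begin{array}{l}k\\j\end{array}\right]
\end{equation*}
by comparing product formulas using $1-(-q)^{i}=-(-q)^{i}\bigl(1-(-q)^{-i}\bigr)$ (the numerator/denominator monomials collapse to $(-q)^{j(k-j)}$ after cancellation of signs). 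Combining exponents via $j(j-1)/2+j(k-j)=j(2k-j-1)/2$, the $q$-binomial theorem takes the convenient form
\begin{equation*}
\prod_{i=0}^{k-1}\bigl(1+t(-q)^{i}\bigr)=\sum_{j=0}^{k}(-q)^{j(2k-j-1)/2}\left[\begin{array}{l}k\\j\end{array}\right]t^{j}.
\end{equation*}

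To finish, I would reindex the target product by setting $i=k-1-j$, which rewrites $\prod_{j=0}^{k-1}(1+(-q)^{l-j})$ as $\prod_{i=0}^{k-1}(1+t(-q)^{i})$ with $t=(-q)^{l-k+1}$. Substituting this $t$ into the displayed form of the $q$-binomial theorem and consolidating the exponent via $j(2k-j-1)/2+j(l-k+1)=j(2l+1-j)/2$ yields exactly the left-hand side of the reduced identity, completing the proof. The only nontrivial bookkeeping point, and the main source of potential sign errors, is the interplay between the base change for $q$-binomials (bases $-q$ vs.\ $(-q)^{-1}$) and the half-integer exponents in the $q$-binomial theorem; once these are tracked carefully, every step is routine.
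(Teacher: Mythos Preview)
Your proof is correct. The first step---the trinomial revision $\left[\begin{smallmatrix}l\\l-j\end{smallmatrix}\right]\left[\begin{smallmatrix}l-j\\l-k\end{smallmatrix}\right]=\left[\begin{smallmatrix}l\\k\end{smallmatrix}\right]\left[\begin{smallmatrix}k\\j\end{smallmatrix}\right]$---is exactly what the paper does, reducing the lemma to the same single-variable identity
\[
\sum_{j=0}^{k}(-q)^{(2l+1-j)j/2}\left[\begin{array}{c}k\\j\end{array}\right]=\prod_{j=0}^{k-1}\bigl(1+(-q)^{l-j}\bigr).
\]
The difference lies in the second step. The paper proves this reduced identity by induction on $k$, using the Pascal-type recurrence $\left[\begin{smallmatrix}k+1\\j\end{smallmatrix}\right]-\left[\begin{smallmatrix}k\\j\end{smallmatrix}\right]=(-q)^{-k-1+j}\left[\begin{smallmatrix}k\\j-1\end{smallmatrix}\right]$ to show that the sum for $k+1$ equals $(1+(-q)^{l-k})$ times the sum for $k$. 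You instead recognize the identity as a specialization of the $q$-binomial theorem after a base change from $(-q)^{-1}$ to $-q$ and the substitution $t=(-q)^{l-k+1}$. Your route is shorter and more conceptual: the paper's induction is, in effect, a self-contained reproof of the $q$-binomial theorem in this instance, whereas you invoke it once and are done. The only cost is the need to track the base-change factor $(-q)^{j(k-j)}$ and the exponent arithmetic carefully, which you have done correctly.
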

\begin{proof}
	First, note that
	\begin{equation*}\begin{array}{ll}
			\left[ \begin{array}{c}  
			l\\
			l-j
		\end{array}\middle] \middle[ \begin{array}{c} l-j\\ l-k \end{array}\right]&=\dfrac{\prod_{i=1}^{l}(1-(-q)^{-i})}{\prod_{i=1}^{l-j}(1-(-q)^{-i})\prod_{i=1}^{j}(1-(-q)^{-i})}\\\\
	&\times \dfrac{\prod_{i=1}^{l-j}(1-(-q)^{-i})}{\prod_{i=1}^{k-j}(1-(-q)^{-i})\prod_{i=1}^{l-k}(1-(-q)^{-i})}\qquad\qquad\qquad\qquad\qquad
	\end{array}
	\end{equation*}
		
		\begin{equation*}\begin{array}{ll}
		\quad\qquad\qquad\qquad\qquad&=\dfrac{\prod_{i=1}^{l}(1-(-q)^{-i})}{\prod_{i=1}^{l-k}(1-(-q)^{-i})\prod_{i=1}^{j}(1-(-q)^{-i})\prod_{i=1}^{k-j}(1-(-q)^{-i})}\\
		\end{array}
\end{equation*}

\begin{equation*}\begin{array}{ll}
	&=\dfrac{\prod_{i=1}^{l}(1-(-q)^{-i})}{\prod_{i=1}^{k}(1-(-q)^{-i})\prod_{i=1}^{l-k}(1-(-q)^{-i})}\\
	&\times \dfrac{\prod_{i=1}^{k}(1-(-q)^{-i})}{\prod_{i=1}^{j}(1-(-q)^{-i})\prod_{i=1}^{k-j}(1-(-q)^{-i})}
\end{array}
\end{equation*}

\begin{equation*}\begin{array}{ll}
	\qquad&=\left[ \begin{array}{c}  
		l\\
		k
	\end{array}\middle] \middle[ \begin{array}{c} k\\ j \end{array}\right]\qquad\qquad\qquad\qquad\qquad\qquad
	\end{array}
	\end{equation*}

Therefore, it suffices to show that
	\begin{equation}\label{eq2.2.5}
	\mathlarger{\sum}_{j=0}^{k}(-q)^{(2l+1-j)j/2}\left[ \begin{array}{c}  
		k\\
		j\end{array}\right]=\mathlarger{\prod}_{j=0}^{k-1}(1+(-q)^{l-j}).
\end{equation}

We prove \eqref{eq2.2.5} by induction on $k$.
When $k=1$, we have that
\begin{equation*}
	\mathlarger{\sum}_{j=0}^{1}(-q)^{(2l+1-j)j/2}\left[ \begin{array}{c}  
		1\\
		j\end{array}\right]=1+(-q)^{l}.
\end{equation*}
Therefore, \eqref{eq2.2.5} is true for $k=1$.
Now, assume that \eqref{eq2.2.5} is true for $k$.

Note that
\begin{equation*}\begin{array}{ll}
	\left[ \begin{array}{c}  
		k+1\\
		j\end{array}\right] &=\dfrac{\prod_{i=1}^{k+1}(1-(-q)^{-i})}{\prod_{i=1}^{j}(1-(-q)^{-i})\prod_{i=1}^{k+1-j}(1-(-q)^{-i})}\\\\
	
	&=\dfrac{(1-(-q)^{-k-1})}{(1-(-q)^{-k-1+j})} \left[ \begin{array}{c}  
		k\\
	j\end{array}\right]
	\end{array}
\end{equation*}

Therefore,
\begin{equation*}\begin{array}{l}
		\left[ \begin{array}{c}  
			k+1\\
			j\end{array}\right]-\left[ \begin{array}{c}  
			k\\
			j\end{array}\right] =\dfrac{((-q)^{-k-1+j}-(-q)^{-k-1})}{(1-(-q)^{-k-1+j})} \left[ \begin{array}{c}  
			k\\
			j\end{array}\right]\\
		=(-q)^{-k-1+j}\dfrac{(1-(-q)^{-j})}{(1-(-q)^{-k-1+j})} \dfrac{(1-(-q)^{-k+j-1})}{(1-(-q)^{-j})}\left[ \begin{array}{c}  
			k\\
			j-1\end{array}\right]\\
		=(-q)^{-k-1+j}\left[ \begin{array}{c}  
			k\\
			j-1\end{array}\right]
	\end{array}
\end{equation*}

This implies that
\begin{equation*}
	\begin{array}{l}
	\mathlarger{\sum}_{j=0}^{k+1}(-q)^{(2l+1-j)j/2}\left[ \begin{array}{c}  
		k+1\\
		j\end{array}\middle]-	\mathlarger{\sum}_{j=0}^{k}(-q)^{(2l+1-j)j/2}\middle[ \begin{array}{c}  
			k\\
			j\end{array}\right]\\
		=	\mathlarger{\sum}_{j=0}^{k}(-q)^{(2l+1-j)j/2}\lbrace \left[ \begin{array}{c}  
			k+1\\
			j\end{array}\middle]-\middle[ \begin{array}{c}  
			k\\
			j\end{array}\right] \rbrace+(-q)^{(2l-k)(k+1)/2}.
	\end{array}
\end{equation*}
Since $\left[ \begin{array}{c}  
	k+1\\
	0\end{array}\middle]=\middle[ \begin{array}{c}  
	k\\
	0\end{array}\right]=1$, we have that

\begin{equation*}
	\begin{array}{l}
		\mathlarger{\sum}_{j=0}^{k}(-q)^{(2l+1-j)j/2}\lbrace \left[ \begin{array}{c}  
			k+1\\
			j\end{array}\middle]-\middle[ \begin{array}{c}  
			k\\
			j\end{array}\right] \rbrace+(-q)^{(2l-k)(k+1)/2}\\
			=\mathlarger{\sum}_{j=1}^{k}(-q)^{(2l+1-j)j/2}\lbrace \left[ \begin{array}{c}  
			k+1\\
			j\end{array}\middle]-\middle[ \begin{array}{c}  
			k\\
			j\end{array}\right] \rbrace+(-q)^{(2l-k)(k+1)/2}\\
			=\mathlarger{\sum}_{j=1}^{k}(-q)^{(2l+1-j)j/2}(-q)^{-k-1+j}\left[ \begin{array}{c}  
				k\\
				j-1\end{array}\right]+(-q)^{(2l-k)(k+1)/2}\\
			=\mathlarger{\sum}_{j=0}^{k-1}(-q)^{(2l-j)(j+1)/2}(-q)^{-k+j}\left[ \begin{array}{c}  
				k\\
				j\end{array}\right]+(-q)^{(2l-k)(k+1)/2}\\
			=\mathlarger{\sum}_{j=0}^{k}(-q)^{(2l-j)(j+1)/2}(-q)^{-k+j}\left[ \begin{array}{c}  
				k\\
				j\end{array}\right]\\
			=(-q)^{l-k}\mathlarger{\sum}_{j=0}^{k}(-q)^{(2l+1-j)j/2}\left[ \begin{array}{c}  
				k\\
				j\end{array}\right]\\
	\end{array}
\end{equation*}

Therefore, we have that
\begin{equation*}
	\begin{array}{l}
		\mathlarger{\sum}_{j=0}^{k+1}(-q)^{(2l+1-j)j/2}\left[ \begin{array}{c}  
			k+1\\
			j\end{array}\middle]=	(1+(-q)^{l-k})\mathlarger{\sum}_{j=0}^{k}(-q)^{(2l+1-j)j/2}\middle[ \begin{array}{c}  
			k\\
			j\end{array}\right].
	\end{array}
\end{equation*}

By our inductive hypothesis, we have
\begin{equation*}
	\begin{array}{l}
		\mathlarger{\sum}_{j=0}^{k+1}(-q)^{(2l+1-j)j/2}\left[ \begin{array}{c}  
			k+1\\
			j\end{array}\right]=	(1+(-q)^{l-k})\mathlarger{\prod}_{j=0}^{k-1}(1+(-q)^{l-j}).
	\end{array}
\end{equation*}

This finishes the proof of \eqref{eq2.2.5} and hence the proof of the lemma.

\end{proof}

\begin{lemma}\label{lemma2.11}
	We have that
	\begin{equation*}\begin{array}{l}
			\mathlarger{\sum}_{k=0}^{l}(-1)^{k}(-q)^{(l-s)k-k(k+1)/2}\mathlarger{\sum}_{j=0}^{k}(-q)^{(2l+1-j)j/2}\left[ \begin{array}{c}  
				l\\
				l-j
			\end{array}\middle] \middle[ \begin{array}{c} l-j\\ l-k \end{array}\right]\\
			\times \lbrace \mathlarger{\prod}_{i=0}^{s-1}(1-(-q)^{-l+k+i}) -(-1)^s(-q)^{s(-l+k)}\mathlarger{\prod}_{i=0}^{s-1}(1-(-q)^{-l+i})\rbrace =0.
		\end{array}
	\end{equation*}
\end{lemma}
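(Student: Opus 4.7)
The plan is to reduce the identity to a $q$-series equation via the $q$-binomial theorem and to complete the argument by induction on $s$. Write $t=-q$; denote by $\binom{s}{m}_t$ the standard $t$-binomial coefficient, and by $(x;t)_n:=\prod_{j=0}^{n-1}(1-xt^j)$ the $q$-Pochhammer symbol. First, I would apply the $q$-binomial theorem $\prod_{i=0}^{s-1}(1-yt^i)=\sum_{m=0}^{s}(-y)^m t^{m(m-1)/2}\binom{s}{m}_t$ to expand both products in the bracketed factor (taking $y=t^{k-l}$ and $y=t^{-l}$). Introducing the auxiliary sum
$$V_a:=\sum_{k=0}^{l}(-1)^k t^{ak-k(k+1)/2}\left[\begin{array}{c}l\\k\end{array}\right]\prod_{j=0}^{k-1}(1+t^{l-j}),$$
so that the $k$-summand $A_k$ in the lemma satisfies $\sum_k A_k t^{mk}=V_{l-s+m}$, the identity reduces, after swapping the order of summation, to
$$\sum_{m=0}^{s}(-1)^m t^{m(m-1)/2-ml}\binom{s}{m}_t\bigl[V_{l-s+m}-(-1)^s t^{-sl}V_l\bigr]=0. \qquad (*)$$

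Next, I would recast $V_a$ in a more workable form. Using $\left[\begin{array}{c}l\\k\end{array}\right]=t^{-k(l-k)}\binom{l}{k}_t$ and $\prod_{j=0}^{k-1}(1+t^{l-j})=t^{lk-k(k-1)/2}\prod_{j=0}^{k-1}(1+t^{j-l})$, all the $t$-exponents in $V_a$ telescope and yield
$$V_a=\sum_{k=0}^{l}(-t^a)^k\binom{l}{k}_t\prod_{j=0}^{k-1}(1+t^{j-l}).$$
Specializing at $a=l$, rewriting $(-t^l)^k\prod_{j=0}^{k-1}(1+t^{j-l})=(-1)^k t^{k(k-1)/2}\prod_{j=0}^{k-1}(1+t^{l-j})$, and applying the Euler form of the $q$-binomial theorem, $\sum_k t^{k(k-1)/2}\binom{l}{k}_t z^k=(-z;t)_l$, gives the clean closed form $V_l=(-1)^l t^{l^2}$. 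The $s=1$ instance of $(*)$ then becomes the relation $V_l=t^{2l}V_{l-1}$, which can be checked by the same method.

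Finally, I would prove $(*)$ for general $s$ by induction on $s$, using the $q$-Pascal rule $\binom{s+1}{m}_t=\binom{s}{m}_t+t^{s+1-m}\binom{s}{m-1}_t$ to express the $(s+1)$-case as an explicit linear combination of the $s$- and $(s-1)$-cases, plus a correction term that vanishes after a short computation. The main obstacle will be this last step: for $a$ significantly smaller than $l$, the quantities $V_a$ are not monomials but genuine Laurent polynomials in $t$ (for example $V_0=1-t^{-3}-t^{-6}$ when $l=3$), so the cancellation in $(*)$ is global rather than pointwise in $m$ and must be tracked carefully through the induction. A potentially cleaner alternative would be to recognize $V_a$ as the terminating $q$-hypergeometric series ${}_2\phi_0(t^{-l},-t^{-l};-;t;-t^{a+l})$ (via the identity $(t^{-l};t)_k(-t^{-l};t)_k=(t^{-2l};t^2)_k$), and to invoke a classical transformation formula of Gasper--Rahman type to evaluate $(*)$ directly.
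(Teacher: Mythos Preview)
Your reduction to $(*)$ is a correct and attractive reformulation (you are tacitly invoking Lemma~2.10 to replace the inner $j$-sum by $\bigl[\begin{smallmatrix}l\\k\end{smallmatrix}\bigr]\prod_{j=0}^{k-1}(1+t^{l-j})$ when you write down $V_a$, but that lemma is already available). The route, however, is genuinely different from the paper's. The paper does not expand in $m$ via the $q$-binomial theorem nor induct on $s$; instead, after Lemma~2.10 it keeps the $k$-sum intact, writes the $k$-th term as $\alpha_{sk}=\frac{\prod_{i=0}^{k-1}(1-t^{-2l+2i})}{\prod_{i=1}^{k}(1-t^{-i})}(B_{k,1}+B_{k,2})$, and introduces auxiliary quantities $L_{t,k}$, $R_{t,k}$ so that Claim~2.12(5) yields the telescoping identity $\sum_{i=0}^{t}\alpha_{si}=\frac{\prod_{i=0}^{t}(1-t^{-2l+2i})}{\prod_{i=1}^{t}(1-t^{-i})}\sum_{k=0}^{s-1}L_{t,k}$. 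At $t=l$ the prefactor contains $1-t^{0}=0$, so the full sum vanishes. Your approach packages the dependence on $s$ more conceptually (everything lives in a single family $V_a$), whereas the paper's telescoping is more elementary but requires setting up and checking several auxiliary identities by hand.

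That said, two of your steps are not yet proofs. First, the ``Euler'' evaluation of $V_l$ does not go through as written: after your rewriting you have $V_l=\sum_k(-1)^k t^{k(k-1)/2}\binom{l}{k}_t\prod_{j=0}^{k-1}(1+t^{l-j})$, and the extra factor $\prod_{j=0}^{k-1}(1+t^{l-j})$ is a genuine $k$-dependent product, not a power $z^k$, so the identity $\sum_k t^{k(k-1)/2}\binom{l}{k}_t z^k=(-z;t)_l$ is not applicable. The closed form $V_l=(-1)^l t^{l^2}$ is nonetheless correct (one checks it readily for small $l$, and it follows by induction on $l$ or, as you hint at the end, by recognising $V_a$ as a terminating ${}_2\phi_0$ in base $t^2$), but you need an actual argument here. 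Second, the induction on $s$ is only asserted. After applying $q$-Pascal the $(s{+}1)$-case splits into a piece with $\binom{s}{m}_t$ involving $V_{l-s-1},\dots,V_{l-1}$ and a piece involving $V_{l-s},\dots,V_l$; the inductive hypothesis handles the latter directly, but the former is \emph{not} an instance of $(*)$ for $s$ with the same $l$ (the $t^{-ml}$ weight does not shift with the $V$-index), so the ``correction term that vanishes after a short computation'' is precisely the heart of the matter and must be exhibited. Until those two points are filled in, the argument is a plausible sketch rather than a proof.
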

\begin{proof}	By Lemma \ref{lemma2.10}, we need to show that
	\begin{equation*}\begin{array}{l}
			\mathlarger{\sum}_{k=0}^{l}(-1)^{k}(-q)^{(l-s)k-k(k+1)/2}\left[ \begin{array}{c} l\\ k \end{array}\right]\mathlarger{\prod}_{j=0}^{k-1}(1+(-q)^{l-j}).\\
			\times \lbrace \mathlarger{\prod}_{i=0}^{s-1}(1-(-q)^{-l+k+i}) -(-1)^s(-q)^{s(-l+k)}\mathlarger{\prod}_{i=0}^{s-1}(1-(-q)^{-l+i})\rbrace =0.
		\end{array}
	\end{equation*}

Let
\begin{equation*}\begin{array}{l}
	\alpha_{sk}:=(-1)^{k}(-q)^{(l-s)k-k(k+1)/2}\left[ \begin{array}{c} l\\ k \end{array}\right]\mathlarger{\prod}_{j=0}^{k-1}(1+(-q)^{l-j}).\\
	\times \lbrace \mathlarger{\prod}_{i=0}^{s-1}(1-(-q)^{-l+k+i}) -(-1)^s(-q)^{s(-l+k)}\mathlarger{\prod}_{i=0}^{s-1}(1-(-q)^{-l+i})\rbrace
	\end{array}
\end{equation*}

Since $\left[ \begin{array}{c} l\\ k \end{array}\right]=\dfrac{\prod_{i=l-k+1}^l(1-(-q)^{-i})}{\prod_{i=1}^k(1-(-q)^{-i})}$ and $(1+(-q)^{l-j})=(-q)^{l-j}(1+(-q)^{-l+j})$ we have
\begin{equation*}\begin{array}{ll}
	\alpha_{sk}&=\dfrac{\prod_{i=l-k+1}^l(1-(-q)^{-2i})}{\prod_{i=1}^k(1-(-q)^{-i})}\\
	&\times \mathlarger{\mathlarger{\lbrace}} (-1)^k(-q)^{2lk-k^2-sk}\mathlarger{\prod}_{i=0}^{s-1}(1-(-q)^{-l+k+i})\\ &-(-1)^{s+k}(-q)^{2lk-k^2-sl}\mathlarger{\prod}_{i=0}^{s-1}(1-(-q)^{-l+i})\mathlarger{\mathlarger{\rbrace}}.
	\end{array}
\end{equation*}

Now, we want to prove that $\sum_{k=0}^{l}\alpha_{sk}=0$.

Assume that $l < s$. Then, $\alpha_{sk}$ is zero for all $k$, and hence $\sum_{k=0}^{l}\alpha_{sk}=0$. 

Now, assume that $l \geq s$.

For $0 \leq t \leq l$, we write $B_{t,1}$ and $B_{t,2}$ for
\begin{equation*}
	\begin{array}{l}
		B_{t,1}:=(-1)^t(-q)^{2lt-t^2-st}\mathlarger{\prod}_{i=0}^{s-1}(1-(-q)^{-l+t+i});\\
		B_{t,2}:=-(-1)^{s+t}(-q)^{2lt-t^2-sl}\mathlarger{\prod}_{i=0}^{s-1}(1-(-q)^{-l+i}).
	\end{array}
\end{equation*}
Also, for $0 \leq t \leq l$, $0 \leq k \leq s-1$, we write $L_{t,k}$ and $R_{t,k}$ for
\begin{equation*}
	\begin{array}{ll}
		L_{t,k}:=&(-1)^{t-k}(-q)^{l(2t-k)-t(t+s-k)}\prod_{i=1}^{k}(1-(-q)^{-l+s-i})\\
		&\quad\quad\quad\quad\quad\quad\quad\quad\quad\quad\quad\times \prod_{i=1}^{s-k-1}(1-(-q)^{-l+t+i}),\\\\
		
		R_{t,k}:=&(-1)^{t-k}(-q)^{l(2t-k-1)-t(t+s-k-1)}\prod_{i=1}^{k}(1-(-q)^{-l+s-i})\\
		&\quad\quad\quad\quad\quad\quad\quad\quad\quad\quad\quad\quad\quad\times \prod_{i=0}^{s-k-1}(1-(-q)^{-l+t+i}).\\
	\end{array}
\end{equation*}
Here, we choose the following convention. The term $\prod_{i=1}^{k}(1-(-q)^{-l+s-i})$ is $1$ if $k=0$, and the term $\prod_{i=1}^{s-k-1}(1-(-q)^{-l+t+i})$ is $1$ if $k=s-1$.

\begin{claim}\label{claim2.12} For $0 \leq t \leq l$, and $0 \leq k \leq s-1$ we have the followings.
	\begin{enumerate}
		\item $\alpha_{s0}=(1-(-q)^{-2l}) \sum_{k=0}^{s-1}L_{0,k}$.\\
		
		\item $L_{t,s-1}(1-(-q)^{-t-1})+B_{t+1,2}=R_{t+1,s-1}$.\\
		
		\item For $2 \leq r \leq s$, we have
		\begin{equation*}
			L_{t,s-r}(1-(-q)^{-t-1})+R_{t+1,s-r+1}=L_{t+1,s-r+1}(1-(-q)^{-2l+2t+2)}+R_{t+1,s-r}.
		\end{equation*}
	\quad
	\item $R_{t+1,0}+B_{t+1,1}=L_{t+1,0}(1-(-q)^{-2l+2t+2})$.\\
	
	\item $\sum_{k=0}^{s-1}L_{t,k}(1-(-q)^{-t-1})+B_{t+1,1}+B_{t+1,2}=\sum_{k=0}^{s-1}L_{t+1,k}(1-(-q)^{-2l+2t+2})$
	\end{enumerate}
\end{claim}
\begin{proof}[Proof of Claim \ref{claim2.12}]
	\begin{enumerate}
		\item Note that
		\begin{equation*}\begin{array}{ll}
				\alpha_{s0}&=(1-(-1)^s(-q)^{-sl})\mathlarger{\prod}_{i=0}^{s-1}(1-(-q)^{-l+i})\\
				&=(1+(-q)^{-l})\sum_{k=0}^{s-1}(-1)^{k}(-q)^{-kl}\mathlarger{\prod}_{i=0}^{s-1}(1-(-q)^{-l+i})\\
				&=(1-(-q)^{-2l})\sum_{k=0}^{s-1}L_{0,k}.
			\end{array}
		\end{equation*}
	This finishes the proof of (1).\\
	
	\item We have that
	\begin{equation*}\begin{array}{l}
		L_{t,s-1}(1-(-q)^{-t-1})+B_{t+1,2}\\
		=(-1)^{t-s+1}(-q)^{l(2t-s+1)-t(t+1)}\prod_{i=1}^{s-1}(1-(-q)^{-l+s-i}) (1-(-q)^{-t-1})\\
		-(-1)^{s+t+1}(-q)^{l(2t-s+2)-(t+1)^2}\mathlarger{\prod}_{i=0}^{s-1}(1-(-q)^{-l+i}).\\\\
		=(-1)^{t-s+2}(-q)^{l(2t-s+2)-(t+1)^2}\prod_{i=1}^{s-1}(1-(-q)^{-l+s-i})\\
		\times \lbrace 1-(-q)^l-(-q)^{-l+t+1}(1-(-q)^{-t-1})\rbrace\\\\
		=(-1)^{t-s+2}(-q)^{l(2t-s+2)-(t+1)^2}\prod_{i=1}^{s-1}(1-(-q)^{-l+s-i})(1-(-q)^{-l+t+1})\\
		=R_{t+1,s-1}.
		\end{array}
	\end{equation*}
This finishes the proof of (2).\\

\item We have that
	\begin{equation*}\begin{array}{l}
		L_{t,s-r}(1-(-q)^{-t-1})+R_{t+1,s-r+1}\\
		=(-1)^{t-s+r}(-q)^{l(2t-s+r)-t(t+r)}\prod_{i=1}^{s-r}(1-(-q)^{-l+s-i}) \\ \quad\quad\quad\quad\quad\quad\quad\quad\quad\quad\quad\quad\quad\quad\prod_{i=1}^{r-1}(1-(-q)^{-l+t+i})(1-(-q)^{-t-1})\\
		
		+(-1)^{t-s+r}(-q)^{l(2t-s+r)-(t+1)(t+r-1)}\prod_{i=1}^{s-r+1}(1-(-q)^{-l+s-i}) \\ \quad\quad\quad\quad\quad\quad\quad\quad\quad\quad\quad\quad\quad\quad\quad\quad\prod_{i=0}^{r-2}(1-(-q)^{-l+t+1+i})\\\\
		
		=(-1)^{t-s+r}\prod_{i=1}^{s-r}(1-(-q)^{-l+s-i})\prod_{i=1}^{r-1}(1-(-q)^{-l+t+i}) \\
		
		(-q)^{l(2t-s+r)-(t+1)(t+r-1)}\lbrace (-q)^{r-1}(1-(-q)^{-t-1})+(1-(-q)^{-l+r-1})\rbrace\\\\
		
		=-(-q)^{-t+r-2}(1+(-q)^{-l+t+1})(-1)^{t-s+r}(-q)^{l(2t-s+r)-(t+1)(t+r-1)}\\
		(-q)^{l-r+1}(-q)^{-l+r-1}\prod_{i=1}^{s-r}(1-(-q)^{-l+s-i})\prod_{i=1}^{r-1}(1-(-q)^{-l+t+i})\\\\
		
		+((-q)^{r-1}+1)(-1)^{t-s+r}(-q)^{l(2t-s+r)-(t+1)(t+r-1)}\\
		\quad\quad\quad\quad\prod_{i=1}^{s-r}(1-(-q)^{-l+s-i})\prod_{i=1}^{r-1}(1-(-q)^{-l+t+i})\\
		\end{array}\end{equation*}
	\begin{equation*}\begin{array}{l}
		=(1-(-q)^{-2l+2t+2})(-1)^{t-s+r}(-q)^{l(2t-s+r+1)-(t+1)(t+r)}\\
	\quad\quad\quad	\prod_{i=1}^{s-r+1}(1-(-q)^{-l+s-i})\prod_{i=2}^{r-1}(1-(-q)^{-l+t+i})\\\\
	
	-(1-(-q)^{-2l+2t+2})(-1)^{t-s+r}(-q)^{l(2t-s+r+1)-(t+1)(t+r)}\\
	\quad\quad\quad	\prod_{i=1}^{s-r}(1-(-q)^{-l+s-i})\prod_{i=2}^{r-1}(1-(-q)^{-l+t+i})\\
	+((-q)^{r-1}+1)(-1)^{t-s+r}(-q)^{l(2t-s+r)-(t+1)(t+r-1)}\\
	\quad\quad\quad\quad\prod_{i=1}^{s-r}(1-(-q)^{-l+s-i})\prod_{i=1}^{r-1}(1-(-q)^{-l+t+i})\\\\
	
	=L_{t+1,s-r+1}(1-(-q)^{-2l+2t+2})\\\\
	
	+(-1)^{t-s+r+1}(-q)^{l(2t-s+r+1)-(t+1)(t+r)}\\
	\quad\quad\quad\quad\prod_{i=1}^{s-r}(1-(-q)^{-l+s-i})\prod_{i=1}^{r-1}(1-(-q)^{-l+t+i})\\
	\quad\quad\quad\quad\quad\quad\quad\quad \times 
	\lbrace 1+(-q)^{-l+t+1}-((-q)^{r-1}+1)(-q)^{-l+t+1}\rbrace\quad\quad\quad\\\\
	=L_{t+1,s-r+1}(1-(-q)^{-2l+2t+2})+R_{t+1,s-r}.
	\end{array}
\end{equation*}
This finishes the proof of (3).\\

\item We have that
\begin{equation*}
	\begin{array}{l}
		R_{t+1,0}+B_{t+1,1}\\
		=(-1)^{t+1}(-q)^{l(2t+1)-(t+1)(t+s)}\prod_{i=0}^{s-1}(1-(-q)^{-l+t+1+i})\\
		+(-1)^{t+1}(-q)^{l(2t+2)-(t+1)(t+s+1)}\prod_{i=0}^{s-1}(1-(-q)^{-l+t+1+i})\\\\
		=(-1)^{t+1}(-q)^{l(2t+2)-(t+1)(t+s+1)}\prod_{i=0}^{s-1}(1-(-q)^{-l+t+1+i})(1+(-q)^{-l+t+1})\\
		=L_{t+1,0}(1-(-q)^{-2l+2t+2}).
		\end{array}
\end{equation*}
This finishes the proof of (4).\\
\item This is obvious from (1), (2), (3), and (4).
	\end{enumerate}
\end{proof}

Now, by Claim \ref{claim2.12} (1), we have that $\alpha_{s0}=(1-(-q)^{-2l}) \sum_{k=0}^{s-1}L_{0,k}$. Also, by definition, we have that
\begin{equation*}
	\alpha_{st}=\dfrac{\prod_{i=0}^{t-1}(1-(-q)^{-2l+2i})}{\prod_{i=1}^{t}(1-(-q)^{-i})}(B_{t,1}+B_{t+2}).
\end{equation*}

Therefore, by Claim \ref{claim2.12} (5) we have
\begin{equation*}\begin{array}{ll}
	\alpha_{s0}+\alpha_{s1}&=\dfrac{(1-(-q)^{-2l})}{(1-(-q)^{-1})}\lbrace\sum_{k=0}^{s-1}L_{0,k}(1-(-q)^{-1})+B_{1,1}+B_{1,2}\rbrace\\\\
		&=\dfrac{(1-(-q)^{-2l})(1-(-q)^{-2l+2})}{(1-(-q)^{-1})}\lbrace\sum_{k=0}^{s-1}L_{1,k}\rbrace
		\end{array}
\end{equation*}

Now, we claim that for any $t\leq l$,
\begin{equation}\label{eq2.2.6}
	\sum_{i=0}^{t}\alpha_{si}=\dfrac{\prod_{i=0}^{t}(1-(-q)^{-2l+2i})}{\prod_{i=1}^{t}(1-(-q)^{-i})}\sum_{k=0}^{s-1}L_{t,k}.
\end{equation}

We prove this by induction on $t$. Assume that \eqref{eq2.2.6} is true for $t$, then we have

\begin{equation*}
	\sum_{i=0}^{t+1}\alpha_{si}=\dfrac{\prod_{i=0}^{t}(1-(-q)^{-2l+2i})}{\prod_{i=1}^{t+1}(1-(-q)^{-i})}\lbrace\sum_{k=0}^{s-1}L_{t,k}(1-(-q)^{-t-1})+B_{t+1,1}+B_{t+1,2}\rbrace.
\end{equation*}
By Claim \ref{claim2.12} (5), we have
\begin{equation*}
	\sum_{i=0}^{t+1}\alpha_{si}=\dfrac{\prod_{i=0}^{t+1}(1-(-q)^{-2l+2i})}{\prod_{i=1}^{t+1}(1-(-q)^{-i})}\lbrace\sum_{k=0}^{s-1}L_{t+1,k}\rbrace.
\end{equation*}
This finishes the proof of\eqref{eq2.2.6}.

Therefore, we see that \begin{equation*}\sum_{i=0}^{l}\alpha_{si}=\dfrac{\prod_{i=0}^{l}(1-(-q)^{-2l+2i})}{\prod_{i=1}^{l}(1-(-q)^{-i})}\lbrace\sum_{k=0}^{s-1}L_{l,k}\rbrace=0,
	\end{equation*}
and this completes the proof of the lemma.
\end{proof}
\quad\\
\begin{proof}[Proof of Theorem 2.5]
	
	This follows from \eqref{eq2.2.1} and Lemma \ref{lemma2.8}.
\end{proof}
\bigskip

\subsection{Normalized representation densities}
When we apply Theorem \ref{theorem2.5} to compute representation densities, it is better to use a normalized version of the theorem. In this section, we consider a normalized version of Theorem \ref{theorem2.5} and compute some representation densities.

\begin{definition}
	For $A,B \in X_n(O_E)$, we define $A(B)$ by
	\begin{equation*}
		A(B):=\dfrac{\alpha(A,B)}{\alpha(A,A)}.
		\end{equation*}
\end{definition}

These normalized representation densities have the following reduction properties.

\begin{proposition}\label{proposition2.14}
	For $\xi, \lambda \in \Lambda_{n-1}^+$, we consider $(\xi,0), (\lambda,0)$ in $\Lambda_n^+$. Then, we have
	\begin{equation*}
		A_{\xi,0}(A_{\lambda,0})=A_{\xi}(A_{\lambda}).
		\end{equation*}
\end{proposition}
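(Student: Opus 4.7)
My plan is to prove the stronger multiplicative identity
\begin{equation*}
\alpha(A_{(\xi,0)}, A_{(\mu,0)}) = \alpha(A_{(\xi,0)}, (1)) \cdot \alpha(A_\xi, A_\mu)
\end{equation*}
for any $\mu \in \Lambda_{n-1}^+$, where $(1) \in X_1(O_E)$ denotes the $1 \times 1$ identity. Specializing to $\mu = \lambda$ and $\mu = \xi$ and dividing the two resulting identities cancels the common nonzero factor $\alpha(A_{(\xi,0)}, (1))$, yielding $A_{(\xi,0)}(A_{(\lambda,0)}) = A_\xi(A_\lambda)$.

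To prove the factorization, I would fiber the count defining $\alpha(A_{(\xi,0)}, A_{(\mu,0)})$ over the last column. Writing $X = (Y, u) \in M_{n,n}(O_E/\pi^d)$ with $u$ the $n$-th column and $Y$ the first $n-1$ columns, the congruence $A_{(\xi,0)}[X] \equiv A_{(\mu,0)} \pmod{\pi^d}$ is equivalent to three conditions: (i) $A_{(\xi,0)}[u] \equiv 1$, i.e.\ $u$ is a unit vector; (ii) each column of $Y$ is $A_{(\xi,0)}$-orthogonal to $u$ modulo $\pi^d$; and (iii) $A_{(\xi,0)}[Y] \equiv A_\mu$ modulo $\pi^d$. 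The count thus splits as a sum over unit vectors $u$ of the number of admissible $Y$'s.

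The crucial claim is that this inner count is independent of $u$ and equals $|\{Y' \in M_{n-1,n-1}(O_E/\pi^d) : A_\xi[Y'] \equiv A_\mu \pmod{\pi^d}\}|$. For $u = e_n$ this is immediate: (ii) forces the last row of $Y$ to vanish modulo $\pi^d$, and the system then reduces to $A_\xi[Y'] \equiv A_\mu$ on the upper $(n-1) \times (n-1)$ block $Y'$. For a general unit vector $u$, the key is to identify the orthogonal complement $u^\perp$ with the hermitian $O_E/\pi^d$-module $(O_E^{n-1}/\pi^d, A_\xi)$; transporting $Y'$-solutions through such an isometry furnishes the bijection. This identification is the main obstacle and uses $p > 2$: since $\langle u, u \rangle \equiv 1$ is a unit, $\langle u \rangle$ splits off as an orthogonal direct summand of $(O_E^n/\pi^d, A_{(\xi,0)})$, and Witt cancellation for hermitian lattices in the unramified odd-residue setting, applied against the standard decomposition $(O_E^n/\pi^d, A_{(\xi,0)}) = (O_E^{n-1}/\pi^d, A_\xi) \perp \langle e_n \rangle$, identifies $u^\perp$ with $(O_E^{n-1}/\pi^d, A_\xi)$.

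Summing the local bijections over unit vectors $u$ yields the identity $N_d(A_{(\xi,0)}, A_{(\mu,0)}) = N_d(A_{(\xi,0)}, (1)) \cdot N_d(A_\xi, A_\mu)$ for the unnormalized mod-$\pi^d$ counts. Since $n^2 = (2n-1) + (n-1)^2$, the normalization exponents match; multiplying by $q^{-dn^2}$ and letting $d \to \infty$ gives the factorization, and the proposition follows.
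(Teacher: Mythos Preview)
Your proposal is correct and follows the same approach as the paper: both reduce the proposition to the multiplicative factorization $\alpha(A_{(\xi,0)},A_{(\mu,0)})=\alpha(A_{(\xi,0)},A_0)\cdot\alpha(A_{\xi},A_{\mu})$ and then divide the cases $\mu=\lambda$ and $\mu=\xi$. The only difference is that the paper simply invokes this factorization as \cite[Corollary~9.12]{KR2}, whereas you sketch a direct proof by fibering over the last column and using Witt cancellation for hermitian lattices (valid here since $p>2$ and $E/F$ is unramified) to show the inner count is independent of the chosen unit vector; your argument is essentially a reproof of that cited corollary.
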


\begin{proof}
	By \cite[Corollary 9.12]{KR2}, we have that
	\begin{equation*}\begin{array}{l}
		\alpha(A_{\xi,0},A_{\lambda,0})=\alpha(A_{\xi,0},A_0)\alpha(A_{\xi},A_{\lambda}),\\
		\alpha(A_{\xi,0},A_{\xi,0})=\alpha(A_{\xi,0},A_0)\alpha(A_{\xi},A_{\xi}).
		\end{array}
	\end{equation*}

Therefore, we have
	\begin{equation*}
	A_{\xi,0}(A_{\lambda,0})=A_{\xi}(A_{\lambda}).
\end{equation*}
\end{proof}

\begin{proposition}\label{proposition2.15}
	For $\xi, \lambda$ in $\Lambda_n^+$, such that $\xi, \lambda \geq (1,1\dots,1)$, we consider
	\begin{equation*}\begin{array}{l}
		\xi-1:=(\xi_1-1,\dots,\xi_{n}-1) \in \Lambda_n^+\\
		\lambda-1:=(\lambda_1-1,\dots,\lambda_n-1) \in \Lambda_n^+.
		\end{array}
	\end{equation*}
Then, we have
\begin{equation*}
	A_{\xi}(A_{\lambda})=A_{\xi-1}(A_{\lambda-1}).
\end{equation*}
\end{proposition}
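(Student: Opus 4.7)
The plan is to reduce the equality to a simple scaling identity for representation densities: for any integral hermitian $n \times n$ matrices $A$ and $B$,
\begin{equation*}
\alpha(\pi A,\, \pi B) \;=\; q^{n^{2}}\, \alpha(A,B).
\end{equation*}
Once this is granted, the hypothesis $\xi,\lambda \geq (1,\dots,1)$ gives the matrix identities $A_{\xi} = \pi\, A_{\xi-1}$ and $A_{\lambda} = \pi\, A_{\lambda-1}$, so applying the scaling identity twice yields
\begin{equation*}
\alpha(A_{\xi}, A_{\lambda}) \;=\; q^{n^{2}}\, \alpha(A_{\xi-1}, A_{\lambda-1}), \qquad \alpha(A_{\xi}, A_{\xi}) \;=\; q^{n^{2}}\, \alpha(A_{\xi-1}, A_{\xi-1}).
\end{equation*}
Dividing, the common factor $q^{n^{2}}$ cancels in the definition of the normalized density, and the desired equality $A_{\xi}(A_{\lambda}) = A_{\xi-1}(A_{\lambda-1})$ follows.

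The scaling identity itself will be proved by unpacking the definition. The congruence $(\pi A)[X] \equiv \pi B \pmod{\pi^{d}}$ is equivalent to $A[X] \equiv B \pmod{\pi^{d-1}}$, and the latter condition depends only on $X$ modulo $\pi^{d-1}$: if $X' = X + \pi^{d-1}Y$, the expansion
\begin{equation*}
A[X'] \;=\; A[X] + \pi^{d-1}\bigl({}^{t}Y^{*} A X + {}^{t}X^{*} A Y\bigr) + \pi^{2(d-1)}\, A[Y]
\end{equation*}
shows that $A[X'] \equiv A[X] \pmod{\pi^{d-1}}$. Writing $N_{d}(A,B) := |\{X \in M_{n}(O_{E}/\pi^{d}) : A[X] \equiv B \pmod{\pi^{d}}\}|$, the count appearing in the definition of $\alpha(\pi A, \pi B)$ at depth $d$ therefore equals $q^{2n^{2}} \cdot N_{d-1}(A,B)$, where the factor $q^{2n^{2}}$ accounts for the $|M_n(O_E/\pi O_E)|$-many lifts from $M_{n}(O_{E}/\pi^{d-1})$ to $M_{n}(O_{E}/\pi^{d})$. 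Substituting this into the limit defining $\alpha$ and reindexing $d \mapsto d+1$ yields the prefactor $q^{-n^{2}} \cdot q^{2n^{2}} = q^{n^{2}}$, as claimed.

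I do not expect any serious difficulty here; the whole argument is one congruence manipulation plus a standard dimension count. (As an alternative route, one could read the identity off Hironaka's formula in Proposition~\ref{proposition2.2} by the change of summation variable $\mu \mapsto \mu-1$, using that $(\lambda - 1)'$ and $\lambda'$ differ by an index shift and $(\xi-1)'$ relates similarly to $\xi'$; but the scaling argument above avoids this bookkeeping entirely.)
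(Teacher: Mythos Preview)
Your proposal is correct and follows exactly the approach of the paper: both reduce the statement to the scaling identity $\alpha(A_{\xi},A_{\lambda})=q^{n^{2}}\alpha(A_{\xi-1},A_{\lambda-1})$ (and the special case $\lambda=\xi$), then divide. The only difference is that the paper simply asserts this scaling identity as a known fact, whereas you supply a direct proof from the definition; your congruence-and-lift count is correct, including the factor $|M_n(O_E/\pi)|=q^{2n^{2}}$ and the reindexing that produces the net $q^{n^{2}}$.
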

\begin{proof}
	This follows directly from the fact that
	\begin{equation*}\begin{array}{l}
		\alpha(A_{\xi},A_{\xi})=q^{n^2}\alpha(A_{\xi-1},A_{\xi-1}),\\
		\alpha(A_{\xi},A_{\lambda})=q^{n^2}\alpha(A_{\xi-1},A_{\lambda-1})
		\end{array}
		\end{equation*}
	\end{proof}

\begin{example}[$n=1$] By Theorem \ref{theorem2.5}, for $B \in X_1(O_E)$ such that $\pi^{-1}B$ is integral, we have that
	\begin{equation*}
		\lbrace \alpha(A_0,B)+\dfrac{1}{q}\alpha(A_1,B) \rbrace+\dfrac{1}{(-q)}\lbrace\alpha(A_1,B)+\dfrac{1}{q}\alpha(A_2,B)\rbrace=0.
	\end{equation*}
Therefore, $\alpha(A_0,B)=\dfrac{1}{q^2}\alpha(A_2,B)$.

Since $\alpha(A_0,A_0)=(1+\dfrac{1}{q})$ and $\alpha(A_2,A_2)=q^2(1+\dfrac{1}{q})$, we have
\begin{equation*}
	A_0(B)=A_2(B).
	\end{equation*}

For example, for $B=A_{6}=(\pi^{6})$, we have
\begin{equation*}
	A_0(A_{6})=A_2(A_{6})=A_0(A_4)=A_2(A_4)=A_0(A_2)=A_2(A_2)=1.
\end{equation*}

In general, $A_0(B)$ is $1$ for any $B$ such that $\val \det B \equiv 0 (\Mod 2)$.
\end{example}

\begin{corollary}[$n=2$]\label{corollary2.17}
	Assume that $\lambda \geq 3$. For any $B \in X_2(O_E)$ such that $\pi^{-1}B$ is integral, the following equalities hold.
	\begin{enumerate}
		\item $A_{00}(B)=(q+1)A_{11}(B)-qA_{22}(B)$.\\
		
		\item $A_{10}(B)=A_{21}(B)$.\\
		
		\item $A_{20}(B)=q(q-1)A_{22}(B)$.\\
		
		\item $A_{\lambda0}(B)=q^2A_{\lambda2}(B)$.
	\end{enumerate}
\end{corollary}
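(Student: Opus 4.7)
My approach is to specialize Theorem \ref{theorem2.5} to $n=2$. The only admissible $(s,\xi)$ pairs are $(s,\xi)=(1,(\lambda,0))$ with $\lambda\geq 1$ and $(s,\xi)=(2,(0,0))$. In the $s=1$ case the two nested two-term brackets in Theorem \ref{theorem2.5} share a middle summand $\alpha(A_{\lambda 1},B)$, which appears with opposite coefficients $\pm q^{-2}$ and cancels, producing
\begin{equation*}
\alpha(A_{\lambda 0},B)=\tfrac{1}{q^4}\,\alpha(A_{\lambda 2},B)\qquad(\lambda\geq 1).
\end{equation*}
At $\lambda=1$ (reordering $A_{(1,2)}=A_{21}$) this becomes $\alpha(A_{10},B)=q^{-4}\alpha(A_{21},B)$, which I will plug into the $s=2$ case. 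In that case, Theorem \ref{theorem2.5} is a five-term relation whose $\alpha(A_{10},B)$ and $\alpha(A_{21},B)$ contributions cancel after the substitution, leaving
\begin{equation*}
\alpha(A_{00},B)=\tfrac{q+1}{q^4}\,\alpha(A_{11},B)-\tfrac{1}{q^7}\,\alpha(A_{22},B).
\end{equation*}

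To pass to normalized densities I divide by the appropriate self-densities. Proposition \ref{proposition2.15} applies to $(1,1),(2,1),(2,2)$ (all $\geq(1,1)$) and gives
\begin{equation*}
\alpha(A_{11},A_{11})=q^4\alpha(A_{00},A_{00}),\quad \alpha(A_{22},A_{22})=q^8\alpha(A_{00},A_{00}),\quad \alpha(A_{21},A_{21})=q^4\alpha(A_{10},A_{10}).
\end{equation*}
Dividing the two displayed $\alpha$-identities by $\alpha(A_{00},A_{00})$ and $\alpha(A_{10},A_{10})$ and inserting these ratios produces parts (1) and (2) at once.

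Parts (3) and (4) require the more delicate ratio $r_\lambda:=\alpha(A_{\lambda 2},A_{\lambda 2})/\alpha(A_{\lambda 0},A_{\lambda 0})$, which Proposition \ref{proposition2.15} cannot access directly in the denominator since $(\lambda,0)\not\geq(1,1)$. The plan is to apply Proposition \ref{proposition2.15} twice to the numerator (valid for $\lambda\geq 2$), giving $\alpha(A_{\lambda 2},A_{\lambda 2})=q^8\alpha(A_{\lambda-2,0},A_{\lambda-2,0})$, and then use Proposition \ref{proposition2.14} to factor $\alpha(A_{\mu,0},A_{\mu,0})=\alpha(A_{\mu,0},A_0)\cdot\alpha(A_\mu,A_\mu)$ on both $\alpha(A_{\lambda-2,0},A_{\lambda-2,0})$ and $\alpha(A_{\lambda 0},A_{\lambda 0})$. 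The ratio $\alpha(A_\lambda,A_\lambda)/\alpha(A_{\lambda-2},A_{\lambda-2})=q^2$ follows from the rank-one version of Proposition \ref{proposition2.15}, so the problem reduces to
\begin{equation*}
r_\lambda=q^6\cdot\frac{\alpha(A_{\lambda-2,0},A_0)}{\alpha(A_{\lambda 0},A_0)}.
\end{equation*}
The main obstacle will be evaluating $\alpha(A_{\mu,0},A_0)$ directly from Hironaka's formula (Proposition \ref{proposition2.2}); there only two Hironaka-partitions $(0)$ and $(1)$ contribute, and a short calculation yields $\alpha(A_{\mu,0},A_0)=(q+1)/q$ for every $\mu\geq 1$ while $\alpha(A_{00},A_0)=(q^2-1)/q^2$. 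Consequently $r_\lambda=q^6$ for $\lambda\geq 3$ and $r_2=q^5(q-1)$, and multiplying the $s=1$ $\alpha$-identity by $r_\lambda/q^4$ produces the normalization factors $q^2$ and $q(q-1)$ of parts (4) and (3).
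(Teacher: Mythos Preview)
Your proof is correct and follows the same overall strategy as the paper---specialize Theorem~\ref{theorem2.5} to $n=2$ and then normalize---but the execution differs in two noteworthy ways. First, for parts (1) and (2) the paper normalizes the full $s=2$ identity (using explicit self-densities quoted from \cite[Lemma~4.6]{Cho3}) to obtain the single combined relation
\[
A_{00}(B)+A_{10}(B)-(q+1)A_{11}(B)-A_{21}(B)+qA_{22}(B)=0,
\]
and then invokes the parity fact $A(B)=0$ when $\val\det A\not\equiv\val\det B\pmod 2$ to split off the even and odd pieces. You instead substitute the $s=1$ identity at $\lambda=1$ into the $s=2$ identity \emph{before} normalizing, which kills the $A_{10},A_{21}$ terms outright and yields (1) and (2) separately without any parity input. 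Second, for the self-density ratios the paper simply cites \cite[Lemma~4.6]{Cho3}, whereas you reconstruct the needed ratios from Propositions~\ref{proposition2.14}--\ref{proposition2.15} together with a short two-term Hironaka computation of $\alpha(A_{\mu,0},A_0)$. (A small quibble: the factorization $\alpha(A_{\mu,0},A_{\mu,0})=\alpha(A_{\mu,0},A_0)\,\alpha(A_\mu,A_\mu)$ is not the \emph{statement} of Proposition~\ref{proposition2.14} but the identity from \cite[Corollary~9.12]{KR2} used in its proof; you should cite that directly.) Your route is more self-contained---it avoids both the external lemma and the parity argument---while the paper's route is shorter if one is willing to import those ingredients.
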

\begin{proof}
	By Theorem \ref{theorem2.5} for $n=2$ and $s=2$, we have that
	\begin{equation}\label{eq2.3.1}\begin{array}{l}
		\lbrace \alpha(A_{00},B)+(\dfrac{1}{q}-\dfrac{1}{q^2})\alpha(A_{10},B)-\dfrac{1}{q^3}\alpha(A_{11},B)\rbrace\\\\
		-\dfrac{1}{q^4}\lbrace \alpha(A_{11},B)+(\dfrac{1}{q}-\dfrac{1}{q^2})\alpha(A_{21},B)-\dfrac{1}{q^3}\alpha(A_{22},B)\rbrace=0
		\end{array}
	\end{equation}

Also, by \cite[Lemma 4.6]{Cho3}, we can compute that
\begin{equation*}
	\alpha(A_{00},A_{00})=(1+\dfrac{1}{q})(1-\dfrac{1}{q^2}),
\end{equation*}
\begin{equation*}
\alpha(A_{10},A_{10})=q(1+\dfrac{1}{q})^2,
\end{equation*}
\begin{equation*}
	\alpha(A_{11},A_{11})=q^4(1+\dfrac{1}{q})(1-\dfrac{1}{q^2}),
\end{equation*}
\begin{equation*}
	\alpha(A_{21},A_{21})=q^5(1+\dfrac{1}{q})^2,
\end{equation*}
\begin{equation*}
	\alpha(A_{22},A_{22})=q^8(1+\dfrac{1}{q})(1-\dfrac{1}{q^2}),
\end{equation*}

Therefore, \eqref{eq2.3.1} is
\begin{equation*}
A_{00}(B)+A_{10}(B)-(q+1)A_{11}(B)-A_{21}(B)+qA_{22}(B)=0.
\end{equation*}

Note that $A(B)=0$ if $\val \det A \not\equiv \val \det B (\Mod 2)$. Therefore, we have
\begin{equation*}
	A_{00}(B)=(q+1)A_{11}(B)-qA_{22}(B),
\end{equation*}
and
\begin{equation*}
	A_{10}(B)=A_{21}(B).
\end{equation*}
This finishes the proof of (1) and (2).

Now, for $\lambda \geq 2$, by Theorem \ref{theorem2.5} for $n=2$, $s=1$, we have
\begin{equation*}
	\lbrace\alpha(A_{\lambda0},B)-\dfrac{1}{q^2}\alpha(A_{\lambda1},B)\rbrace+\dfrac{1}{q^2}\lbrace \alpha(A_{\lambda1},B)-\dfrac{1}{q^2}\alpha(A_{\lambda2},B)\rbrace=0.
\end{equation*}
Therefore,
\begin{equation}\label{eq2.3.2}
	\alpha(A_{\lambda0},B)=\dfrac{1}{q^4}\alpha(A_{\lambda2},B).
\end{equation}

Now, for $\lambda \geq 3$, we have
\begin{equation*}\begin{array}{l}
	\alpha(A_{\lambda0},A_{\lambda,0})=(1+\dfrac{1}{q})\alpha(A_{\lambda},A_{\lambda})=(1+\dfrac{1}{q})q^2\alpha(A_{\lambda-2},A_{\lambda-2}),\\\\
	\alpha(A_{\lambda2},A_{\lambda,2})=q^8(1+\dfrac{1}{q})\alpha(A_{\lambda-2},A_{\lambda-2}).
	\end{array}
\end{equation*}
Also, for $\lambda=2$, we have
\begin{equation*}
	\begin{array}{l}
		\alpha(A_{20},A_{20})=q^2(1+\dfrac{1}{q})^2,\\\\
		\alpha(A_{22},A_{22})=q^8(1+\dfrac{1}{q})(1-\dfrac{1}{q^2}).
	\end{array}
\end{equation*}

These imply that \eqref{eq2.3.2} is
\begin{equation*}\begin{array}{l}
	A_{\lambda0}(B)=q^2A_{\lambda2}(B), \text{ if }\lambda \geq 3;\\
	
	A_{20}(B)=q(q-1)A_{22}(B).
	\end{array}
\end{equation*}

This finishes the proof of (3) and (4).
\end{proof}

\begin{remark}
	When $n=2$, Corollary \ref{corollary2.17} gives an efficient way to compute representation densities. For example, consider $A_{60}(A_{86})$. If we use Proposition \ref{proposition2.2}, we need several pages to compute this. But, by Corollary \ref{corollary2.17}, we can compute that
	\begin{equation*}\begin{array}{l}
		A_{60}(A_{86})=q^2A_{62}(A_{86})=q^2A_{40}(A_{64})=q^4A_{42}(A_{64})=q^4A_{20}(A_{42})\\\\
		=q^5(q-1)A_{22}(A_{42})=q^5(q-1)A_{00}(A_{20})=q^5(q-1)A_0(A_2)=q^5(q-1).
		\end{array}
	\end{equation*}
\end{remark}

\begin{lemma}[$n=3$]\label{lemma2.19} Assume that $\lambda, \kappa \geq 3$. Then, we have the following equalities.
	\begin{enumerate}
		\item $\alpha(A_{\lambda\kappa0},A_{\lambda\kappa0})=q^{8}(1+\dfrac{1}{q})\alpha(A_{\lambda-2,\kappa-2},A_{\lambda-2,\kappa-2})$.\\
		
		\item $\alpha(A_{\lambda\kappa1},A_{\lambda\kappa1})=q^{13}(1+\dfrac{1}{q})\alpha(A_{\lambda-2,\kappa-2},A_{\lambda-2,\kappa-2})$.\\
		
			\item $\alpha(A_{\lambda\kappa2},A_{\lambda\kappa2})=q^{18}(1+\dfrac{1}{q})\alpha(A_{\lambda-2,\kappa-2},A_{\lambda-2,\kappa-2})$.\\
			
			\item $\alpha(A_{\lambda20},A_{\lambda20})=q^{8}(1+\dfrac{1}{q})^2\alpha(A_{\lambda-2},A_{\lambda-2})$.\\
			
			\item $\alpha(A_{\lambda21},A_{\lambda21})=q^{13}(1+\dfrac{1}{q})^2\alpha(A_{\lambda-2},A_{\lambda-2})$.\\
			
			\item $\alpha(A_{\lambda22},A_{\lambda22})=q^{18}(1+\dfrac{1}{q})(1-\dfrac{1}{q^2})\alpha(A_{\lambda-2},A_{\lambda-2})$.\\
			
			\item $\alpha(A_{220},A_{220})=q^{8}(1+\dfrac{1}{q})^2(1-\dfrac{1}{q^2})$.\\
			
			\item $\alpha(A_{221},A_{221})=q^{13}(1+\dfrac{1}{q})^2(1-\dfrac{1}{q^2})$.\\
			
			\item $\alpha(A_{222},A_{222})=q^{18}(1+\dfrac{1}{q})(1-\dfrac{1}{q^2})(1+\dfrac{1}{q^3})$.\\
			
			\item $\alpha(A_{\lambda00},A_{\lambda00})=q^{2}(1+\dfrac{1}{q})(1-\dfrac{1}{q^2})\alpha(A_{\lambda-2},A_{\lambda-2})$.\\
			
			\item $\alpha(A_{\lambda10},A_{\lambda10})=q^{5}(1+\dfrac{1}{q})^2\alpha(A_{\lambda-2},A_{\lambda-2})$.\\
			
			\item $\alpha(A_{\lambda11},A_{\lambda11})=q^{10}(1+\dfrac{1}{q})(1-\dfrac{1}{q^2})\alpha(A_{\lambda-2},A_{\lambda-2})$.\\
			
			\item $\alpha(A_{200},A_{200})=q^{2}(1+\dfrac{1}{q})^2(1-\dfrac{1}{q^2})$.\\
			
			\item $\alpha(A_{210},A_{210})=q^{5}(1+\dfrac{1}{q})^3$.\\
			
			\item $\alpha(A_{211},A_{211})=q^{10}(1+\dfrac{1}{q})^2(1-\dfrac{1}{q^2})$.\\
			
			\item $\alpha(A_{000},A_{000})=(1+\dfrac{1}{q})(1-\dfrac{1}{q^2})(1+\dfrac{1}{q^3})$.\\
			
			\item $\alpha(A_{100},A_{100})=q^{1}(1+\dfrac{1}{q})^2(1-\dfrac{1}{q^2})$.\\
			
			\item $\alpha(A_{110},A_{110})=q^{4}(1+\dfrac{1}{q})^2(1-\dfrac{1}{q^2})$.\\
			
			\item $\alpha(A_{111},A_{111})=q^{9}(1+\dfrac{1}{q})(1-\dfrac{1}{q^2})(1+\dfrac{1}{q^3})$.\\
	\end{enumerate}
\end{lemma}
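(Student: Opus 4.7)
The proof consists of 19 explicit computations of self representation densities, and the strategy is to combine three tools already in hand: Hironaka's formula (Proposition \ref{proposition2.2}), the ``hyperbolic'' factorization
\begin{equation*}
\alpha(A_{\xi,0}, A_{\lambda, 0}) \;=\; \alpha(A_{\xi,0}, A_0)\cdot\alpha(A_\xi, A_\lambda)
\end{equation*}
from \cite[Corollary 9.12]{KR2} (used in the proof of Proposition \ref{proposition2.14}), and the valuation-shift identity
\begin{equation*}
\alpha(A_\xi, A_\xi) \;=\; q^{n^2}\alpha(A_{\xi-1}, A_{\xi-1})
\end{equation*}
which holds whenever $\xi \geq (1,\ldots,1)$ (used in the proof of Proposition \ref{proposition2.15}). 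Every one of the 19 identities will be reduced, via these tools, to a small list of base densities evaluated directly by Proposition \ref{proposition2.2}.

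First I would dispose of the items in which $\mu$ has every entry $\geq 1$ by iterating the shift identity to reduce to cases whose smallest entry is $0$. For instance (9) and (19) reduce to (16) by two and three shifts respectively; (15) reduces to (13); (12) reduces to (10); and (3), (6) reduce to (1), (4) after pairing the shift with the trailing-zero factorization applied to the truncated part $\bar{\mu} = (\lambda, \kappa)$. Next I would handle the items ending in one or two zeros by iterating the factorization: writing $\mu = (\bar{\mu}, 0^k)$, one obtains
\begin{equation*}
\alpha(A_\mu, A_\mu) \;=\; \alpha(A_{\bar{\mu}}, A_{\bar{\mu}}) \cdot \prod_{j=1}^{k}\alpha(A_{(\bar{\mu}, 0^j)}, A_0),
\end{equation*}
so the task reduces to computing the ``vacuum'' factors $\alpha(A_{(\bar{\mu}, 0^j)}, A_0)$. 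These depend only on the shape of $\bar{\mu}$ modulo $\pi$; a direct calculation from Proposition \ref{proposition2.2} (compatible with the $n=2$ data visible in the proof of Corollary \ref{corollary2.17}, where $\alpha(A_{\lambda 0}, A_0) = 1+1/q$ for $\lambda \geq 1$ and $\alpha(A_{00}, A_0) = 1 - 1/q^2$) shows that each extra trailing zero over a nontrivial block contributes $1 + 1/q$, while extending a purely trivial block of current size $r$ contributes the next unitary Euler factor $1 - (-q)^{-2r-1}$. This yields exactly the prefactors $(1+1/q)$, $(1-1/q^2)$, $(1+1/q^3)$ occurring in the right-hand sides.

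After these reductions, the only items left to establish directly are the seed cases (16), (17), (18), and the three-variable bases (7), (13), (14), obtained by plugging $\lambda = \xi$ into Proposition \ref{proposition2.2} (the sum over $\mu \leq \tilde{\xi}$ is short because every $\xi_i \leq 2$). The main obstacle is organizational rather than technical: matching the accumulated product of shift factors $q^{n^2 \cdot (\text{shifts})}$ and vacuum factors to the exact prefactor $q^a(1+1/q)^b(1-1/q^2)^c(1+1/q^3)^d$ in each line. A useful sanity check is that the total $q$-power on the right-hand side should equal $n^2$ times the smallest entry of $\mu$ plus the contributions from each trailing-zero factor, which immediately rules out arithmetic slips in the exponents.
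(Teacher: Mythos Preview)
Your approach is sound but differs from the paper's: the paper simply cites \cite[Lemma~4.6]{Cho3} with no further argument, whereas you give a self-contained derivation using the shift identity $\alpha(A_\xi,A_\xi)=q^{n^2}\alpha(A_{\xi-1},A_{\xi-1})$, the trailing-zero factorization from \cite[Corollary~9.12]{KR2}, and direct evaluation of the remaining ``vacuum'' factors $\alpha(A_{(\bar\mu,0)},A_0)$ via Proposition~\ref{proposition2.2}. This is essentially what the cited lemma encodes, so your route is more transparent within the present paper at the cost of some bookkeeping; the paper's route is shorter but outsources the content.

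A few bookkeeping slips to fix, none of which affect the strategy. First, item~(19) with $\mu=(1,1,1)$ reduces to~(16) by \emph{one} shift, not three. Second, item~(15) with $\mu=(2,1,1)$ shifts to $(1,0,0)$, which is item~(17), not~(13). Third, your stated formula for the vacuum factor when extending a purely trivial block of size~$r$ is off: the correct factor is $1-(-q)^{-(r+1)}$, not $1-(-q)^{-2r-1}$. Indeed for $r=1$ one has $\alpha(A_{00},A_0)=1-1/q^2$ (as you yourself quote), whereas $1-(-q)^{-3}=1+1/q^3$; for $r=2$ one gets $\alpha(A_{000},A_0)=1+1/q^3$. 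With these corrections your reduction scheme goes through and recovers every line of the lemma.
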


\begin{proof}
	This follows directly from \cite[Lemma 4.6]{Cho3}.
\end{proof}

\begin{corollary}[$n=3$]\label{corollary2.19}
	Assume that $\lambda, \kappa \geq 3$. For any $B \in X_3(O_E)$ such that $\pi^{-1}B$ is integral, the following equalities hold.
	\begin{enumerate}
		\item $A_{000}(B)=qA_{110}(B)+A_{211}(B)-q^3A_{222}(B)$\\
	\text{ }\text{ }\text{ }\text{ }\text{ }\text{ }\text{ }\text{ }\text{ }\text{ }\text{ }$=(q+1)A_{211}(B)-q^3A_{222}(B)$.\\
				
		\item $A_{100}(B)=(q^3+1)A_{111}(B)-qA_{221}(B)$.\\
		\item $A_{110}(B)=A_{211}(B)$.\\
		\item $A_{\lambda\kappa0}(B)=q^4A_{\lambda\kappa2}(B)$.\\
		\item $A_{\lambda20}(B)=q^3(q-1)A_{\lambda22}(B)$.\\
		\item $A_{220}(B)=q^2(q^2-q+1)A_{222}(B)$.\\
		\item $A_{\lambda00}(B)=q^2(q+1)A_{\lambda11}(B)-q^5A_{\lambda22}(B)$.\\
		\item $A_{200}(B)=q^2(q+1)A_{211}(B)-q^3(q^2-q+1)A_{222}(B)$.\\
		\item $A_{210}(B)=q(q-1)A_{221}(B)$.\\
		\item $A_{\lambda10}(B)=q^2A_{\lambda21}(B)$.
		
	\end{enumerate}
\end{corollary}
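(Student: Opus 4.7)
The plan is to apply Theorem~\ref{theorem2.5} at $n=3$ with well-chosen $s$ and $\xi\in\Lambda_{3,s}^+$, read off a linear relation among the densities $\alpha(A_{\mu},B)$, and then divide by the self-densities $\alpha(A_{\mu},A_{\mu})$ supplied by Lemma~\ref{lemma2.19} to convert each identity into one among the normalized quantities $A_{\mu}(B)$. A systematic simplification is the parity observation $A(B)=0$ whenever $\val\det A\not\equiv\val\det B\pmod 2$; depending on the parity of $\val\det B$, roughly half of the summands in each raw relation vanish, leaving only the terms appearing in the statement.

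For the two-term identities (3), (4), (5), (6), (9), (10), I would take $s=1$. Exactly as in the third part of the example following Theorem~\ref{theorem2.5}, the relation collapses to
\begin{equation*}
\alpha(A_{\overline{\xi},0},B)=\frac{1}{q^{6}}\alpha(A_{\overline{\xi},2},B)
\end{equation*}
for any $\overline{\xi}\in\Lambda_2^+$ with $\overline{\xi}_2\geq 1$. Dividing both sides by $\alpha(A_{\overline{\xi},0},A_{\overline{\xi},0})$ turns $q^{-6}$ into the stated numerical coefficient, namely $q^{-6}\cdot\alpha(A_{\overline{\xi},2},A_{\overline{\xi},2})/\alpha(A_{\overline{\xi},0},A_{\overline{\xi},0})$, which Lemma~\ref{lemma2.19} evaluates explicitly. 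Running $\overline{\xi}$ through $(\lambda,\kappa),(\lambda,2),(2,2),(\lambda,1),(2,1),(1,1)$ produces the six two-term identities; for instance for~(6) the ratio is
\begin{equation*}
\frac{q^{18}(1+q^{-1})(1-q^{-2})(1+q^{-3})}{q^{8}(1+q^{-1})^{2}(1-q^{-2})}=q^{8}(q^{2}-q+1),
\end{equation*}
so the final coefficient is $q^{2}(q^{2}-q+1)$, matching the statement.

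For the three-term identities a larger $s$ is required. Take $(s,\xi)=(3,(0,0,0))$ for (1), $(2,(1,0,0))$ for (2), $(2,(\lambda,0,0))$ for (7), and $(2,(2,0,0))$ for (8). In each case the raw relation from Theorem~\ref{theorem2.5} involves five (respectively seven, for $s=3$) of the $A_{\mu}(B)$; the parity filter kills all but three or four of them. For (2), (7), (8) exactly three survive, matching the stated identity. For (1) four survive, indexed by $\mu=(0,0,0),(1,1,0),(2,1,1),(2,2,2)$, and the two equivalent forms in the statement then follow by applying identity~(3), $A_{110}(B)=A_{211}(B)$, to consolidate the middle pair.

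The main obstacle is purely clerical: one has to compute the coefficients $d_{3,s,i}$ from $\prod_{j=0}^{s-1}(1-(-q)^{-3+j}X)$ for $s=2,3$, expand the two sums $\sum_i d_{3,s,i}\alpha(A_{\xi_{0,i}^+},B)$ and $\sum_i d_{3,s,i}\alpha(A_{\xi_{i,s-i}^+},B)$, and then verify that after division by the normalising ratios from Lemma~\ref{lemma2.19} the resulting coefficients simplify precisely to $q+1$, $q^3$, $q^2(q+1)$, $q^3(q^2-q+1)$, and the other scalars claimed. No new conceptual input beyond Theorem~\ref{theorem2.5}, Lemma~\ref{lemma2.19}, and the parity vanishing is required.
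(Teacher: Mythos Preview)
Your proposal is correct and follows exactly the approach the paper indicates: the paper's own proof is the single line ``This follows directly from Lemma~\ref{lemma2.19} and Theorem~\ref{theorem2.5},'' and your plan unpacks this precisely in the manner modeled by the detailed proof of Corollary~\ref{corollary2.17} for $n=2$. Your choices of $(s,\xi)$, the parity filtering, and the normalization via the self-density ratios from Lemma~\ref{lemma2.19} are all on target, as your sample computation for item~(6) confirms.
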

\begin{proof}
	This follows directly from Lemma \ref{lemma2.19} and Theorem \ref{theorem2.5}.
	\end{proof}

\begin{example}
	This Corollary \ref{corollary2.19} gives an efficient way to compute representation densities for $n=3$. For example, let us compute $A_{200}(A_{332})$. By Corollary \ref{corollary2.19} we have the following formula.
	
	\begin{equation*}\begin{array}{ll}
	A_{200}(A_{332})&=q^2(q+1)A_{211}(A_{332})-q^3(q^2-q+1)A_{222}(A_{332})\\
	&=q^2(q+1)A_{100}(A_{221})-q^3(q^2-q+1)A_{000}(A_{110})
	\end{array}
\end{equation*}

Also,
\begin{equation*}\begin{array}{l}
		A_{000}(A_{110})=A_{00}(A_{11})=(q+1)A_{11}(A_{11})=(q+1),
	\end{array}
\end{equation*}
and
\begin{equation*}\begin{array}{ll}
A_{100}(A_{221})&=(q^3+1)A_{111}(A_{221})-qA_{221}(A_{221})\\
&=(q^3+1)A_{000}(A_{110})-q\\
&=(q^3+1)A_{00}(A_{11})-q\\
&=(q^3+1)(q+1)-q\\
&=q^4+q^3+1.
\end{array}
\end{equation*}

Therefore,
\begin{equation*}\begin{array}{ll}
		A_{200}(A_{332})&=q^2(q+1)(q^4+q^3+1)-q^3(q^2-q+1)(q+1)\\
		&=(q+1)(q^6+q^4-q^3+q^2).
	\end{array}
\end{equation*}
\end{example}

\begin{lemma}[$n=4$]\label{lemma2.20} Assume that $\lambda, \kappa, \mu \geq 3$. Then, we have the following equalities.
	\begin{enumerate}
	\item $\alpha(A_{\lambda\kappa\mu0},A_{\lambda\kappa\mu0})=q^{18}(1+\dfrac{1}{q})\alpha(A_{\lambda-2,\kappa-2,\mu-2},A_{\lambda-2,\kappa-2,\mu-2})$.\\
		\item $\alpha(A_{\lambda\kappa\mu1},A_{\lambda\kappa\mu1})=q^{25}(1+\dfrac{1}{q})\alpha(A_{\lambda-2,\kappa-2,\mu-2},A_{\lambda-2,\kappa-2,\mu-2})$.\\
			\item $\alpha(A_{\lambda\kappa\mu2},A_{\lambda\kappa\mu2})=q^{32}(1+\dfrac{1}{q})\alpha(A_{\lambda-2,\kappa-2,\mu-2},A_{\lambda-2,\kappa-2,\mu-2})$.\\
			
	\item $\alpha(A_{\lambda\kappa20},A_{\lambda\kappa20})=q^{18}(1+\dfrac{1}{q})^2\alpha(A_{\lambda-2,\kappa-2},A_{\lambda-2,\kappa-2})$.\\
	
		\item $\alpha(A_{\lambda\kappa21},A_{\lambda\kappa21})=q^{25}(1+\dfrac{1}{q})^2\alpha(A_{\lambda-2,\kappa-2},A_{\lambda-2,\kappa-2})$.\\
		
			\item $\alpha(A_{\lambda\kappa22},A_{\lambda\kappa22})=q^{32}(1+\dfrac{1}{q})(1-\dfrac{1}{q^2})\alpha(A_{\lambda-2,\kappa-2},A_{\lambda-2,\kappa-2})$.\\
			
			\item $\alpha(A_{\lambda220},A_{\lambda220})=q^{18}(1+\dfrac{1}{q})^2(1-\dfrac{1}{q^2})\alpha(A_{\lambda-2},A_{\lambda-2})$.\\
			
			\item $\alpha(A_{\lambda221},A_{\lambda221})=q^{25}(1+\dfrac{1}{q})^2(1-\dfrac{1}{q^2})\alpha(A_{\lambda-2},A_{\lambda-2})$.\\
			
			\item $\alpha(A_{\lambda222},A_{\lambda222})=q^{32}(1+\dfrac{1}{q})(1-\dfrac{1}{q^2})(1+\dfrac{1}{q^3})\alpha(A_{\lambda-2},A_{\lambda-2})$.\\
			\item
			$\alpha(A_{2220},A_{2220})=q^{18}(1+\dfrac{1}{q})^2(1-\dfrac{1}{q^2})(1+\dfrac{1}{q^3})$.\\
			
			\item
			$\alpha(A_{2221},A_{2221})=q^{25}(1+\dfrac{1}{q})^2(1-\dfrac{1}{q^2})(1+\dfrac{1}{q^3})$.\\
			
			\item
			$\alpha(A_{2222},A_{2222})=q^{32}(1+\dfrac{1}{q})(1-\dfrac{1}{q^2})(1+\dfrac{1}{q^3})(1-\dfrac{1}{q^4})$.\\
			
			\item $\alpha(A_{\lambda\kappa00},A_{\lambda\kappa00})=q^8(1+\dfrac{1}{q})(1-\dfrac{1}{q^2})\alpha(A_{\lambda-2,\kappa-2},A_{\lambda-2,\kappa-2})$.\\
			
			\item $\alpha(A_{\lambda\kappa10},A_{\lambda\kappa10})=q^{13}(1+\dfrac{1}{q})^2\alpha(A_{\lambda-2,\kappa-2},A_{\lambda-2,\kappa-2})$.\\
			
			\item $\alpha(A_{\lambda\kappa11},A_{\lambda\kappa11})=q^{20}(1+\dfrac{1}{q})(1-\dfrac{1}{q^2})\alpha(A_{\lambda-2,\kappa-2},A_{\lambda-2,\kappa-2})$.\\

			\item $\alpha(A_{\lambda200},A_{\lambda200})=q^8(1+\dfrac{1}{q})^2(1-\dfrac{1}{q^2})\alpha(A_{\lambda-2},A_{\lambda-2})$.\\
			
			\item $\alpha(A_{\lambda210},A_{\lambda210})=q^{13}(1+\dfrac{1}{q})^3\alpha(A_{\lambda-2},A_{\lambda-2})$.\\
			
			\item $\alpha(A_{\lambda211},A_{\lambda211})=q^{20}(1+\dfrac{1}{q})^2(1-\dfrac{1}{q^2})\alpha(A_{\lambda-2},A_{\lambda-2})$.\\
			
			\item $\alpha(A_{2200},A_{2200})=q^8(1+\dfrac{1}{q})^2(1-\dfrac{1}{q^2})^2$.\\
			
			\item $\alpha(A_{2210},A_{2210})=q^{13}(1+\dfrac{1}{q})^3(1-\dfrac{1}{q^2})$.\\
			
			\item $\alpha(A_{2211},A_{2211})=q^{20}(1+\dfrac{1}{q})^2(1-\dfrac{1}{q^2})^2$.\\

			\item $\alpha(A_{\lambda000},A_{\lambda000})=q^{2}(1+\dfrac{1}{q})(1-\dfrac{1}{q^2})(1+\dfrac{1}{q^3})\alpha(A_{\lambda-2},A_{\lambda-2})$.\\
			
			\item $\alpha(A_{\lambda100},A_{\lambda100})=q^{5}(1+\dfrac{1}{q})^2(1-\dfrac{1}{q^2})\alpha(A_{\lambda-2},A_{\lambda-2})$.\\
			
			\item $\alpha(A_{\lambda110},A_{\lambda110})=q^{10}(1+\dfrac{1}{q})^2(1-\dfrac{1}{q^2})\alpha(A_{\lambda-2},A_{\lambda-2})$.\\
			
			\item $\alpha(A_{\lambda111},A_{\lambda111})=q^{17}(1+\dfrac{1}{q})(1-\dfrac{1}{q^2})(1+\dfrac{1}{q^3})\alpha(A_{\lambda-2},A_{\lambda-2})$.\\

			\item $\alpha(A_{2000},A_{2000})=q^{2}(1+\dfrac{1}{q})^2(1-\dfrac{1}{q^2})(1+\dfrac{1}{q^3})$.\\
			
			\item $\alpha(A_{2100},A_{2100})=q^{5}(1+\dfrac{1}{q})^3(1-\dfrac{1}{q^2})$.\\
			
			\item $\alpha(A_{2110},A_{2110})=q^{10}(1+\dfrac{1}{q})^3(1-\dfrac{1}{q^2})$.\\
			
			\item $\alpha(A_{2111},A_{2111})=q^{17}(1+\dfrac{1}{q})^2(1-\dfrac{1}{q^2})(1+\dfrac{1}{q^3})$.\\

			\item $\alpha(A_{0000},A_{0000})=(1+\dfrac{1}{q})(1-\dfrac{1}{q^2})(1+\dfrac{1}{q^3})(1-\dfrac{1}{q^4})$.\\
			
			\item $\alpha(A_{1000},A_{1000})=q(1+\dfrac{1}{q})^2(1-\dfrac{1}{q^2})(1+\dfrac{1}{q^3})$.\\
			
			\item $\alpha(A_{1100},A_{1100})=q^4(1+\dfrac{1}{q})^2(1-\dfrac{1}{q^2})^2$.\\
			
			\item $\alpha(A_{1110},A_{1110})=q^9(1+\dfrac{1}{q})^2(1-\dfrac{1}{q^2})(1+\dfrac{1}{q^3})$.\\
			
			\item $\alpha(A_{1111},A_{1111})=q^{16}(1+\dfrac{1}{q})(1-\dfrac{1}{q^2})(1+\dfrac{1}{q^3})(1-\dfrac{1}{q^4})$.\\
	\end{enumerate}
\end{lemma}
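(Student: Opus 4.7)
The plan is to derive each of the thirty-four identities as a direct specialization of the self-density formula in \cite[Lemma 4.6]{Cho3}, exactly in parallel with the proof of Lemma \ref{lemma2.19} for $n=3$. Two structural reductions already available in the paper organize the list: Proposition \ref{proposition2.15}, which for $n=4$ reads $\alpha(A_\xi,A_\xi) = q^{16}\alpha(A_{\xi-1},A_{\xi-1})$ whenever every entry of $\xi$ is at least $1$, and the multiplicativity identity \cite[Corollary 9.12]{KR2}, which isolates a trailing block of zeros via $\alpha(A_{\xi,0},A_{\xi,0}) = \alpha(A_{\xi,0},A_0)\,\alpha(A_\xi,A_\xi)$ (used already in the proof of Proposition \ref{proposition2.14}).

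I would proceed by partitioning the list according to the ``shape'' of $\mu \in \Lambda_4^+$, that is, the multiset of values occurring together with the positions of the zeros. For each $\mu$ whose large coordinates are $\geq 3$, iterated use of Proposition \ref{proposition2.15} shifts those entries down by $2$, producing a factor of $q^{16}$ per shift and reducing the self-density to the lower-dimensional self-density $\alpha(A_{\lambda-2,\ldots},A_{\lambda-2,\ldots})$ that appears on the right-hand sides of items (1)--(9), (13)--(18), and (22)--(25). The exponents $18,\,25,\,32$ etc.\ then come from combining the $q^{16}$ per shift with the explicit exponent of $q$ that \cite[Lemma 4.6]{Cho3} attaches to the base shape (tracked by $n(\mu)=\sum(i-1)\mu_i$ and $|\mu|$). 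For the fully explicit base cases (items (10)--(12), (19)--(21), (26)--(29), and (30)--(34)), the Euler factors $(1+1/q)$, $(1-1/q^2)$, $(1+1/q^3)$, $(1-1/q^4)$ are read off directly from the equal-value block structure of $\mu$: a maximal run of $r$ equal entries contributes the truncated product $\prod_{i=1}^r(1-(-q)^{-i})$, exactly as in the Gaussian binomial symbols of Proposition \ref{proposition2.2}.

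The main obstacle is not conceptual but combinatorial: thirty-four cases, each involving a $q$-exponent and a short Euler product, must be matched against \cite[Lemma 4.6]{Cho3}. Since no new analytic input is needed, the most efficient execution is to verify the small number of distinct base shapes with coordinates in $\{0,1,2\}$ once, using Proposition \ref{proposition2.2} or \cite[Lemma 4.6]{Cho3} directly, and then to propagate to the remaining cases with free parameters $\lambda,\kappa,\nu \geq 3$ by Propositions \ref{proposition2.14} and \ref{proposition2.15}. Organizing the cases by the trailing-coordinate value $0$, $1$, or $2$ and increasing the number of free parameters stepwise makes the bookkeeping tractable, and once the base shapes are tabulated the entire lemma follows verbatim from \cite[Lemma 4.6]{Cho3}.
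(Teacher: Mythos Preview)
Your proposal is correct and takes essentially the same approach as the paper: the paper's own proof is the single sentence ``This follows directly from \cite[Lemma 4.6]{Cho3},'' and your plan is precisely to specialize that lemma, with the additional organizational use of Propositions \ref{proposition2.14} and \ref{proposition2.15} being a reasonable (though not strictly necessary) elaboration of the same citation.
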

\begin{proof}
	This follows directly from \cite[Lemma 4.6]{Cho3}.
\end{proof}
\begin{corollary}[$n=4$]\label{corollary2.20}
		Assume that $\lambda, \kappa, \mu \geq 3$. For any $B \in X_4(O_E)$ such that $\pi^{-1}B$ is integral, the following equalities hold.
		
		\begin{enumerate}
			\item $A_{\lambda\kappa\mu0}(B)=q^6A_{\lambda\kappa\mu2}(B)$.\\
			
			\item $A_{\lambda\kappa20}(B)=q^5(q-1)A_{\lambda\kappa22}(B)$.\\
			
			\item $A_{\lambda\kappa10}(B)=q^4A_{\lambda\kappa21}(B)$.\\
			
			\item $A_{\lambda220}(B)=q^4(q^2-q+1)A_{\lambda222}(B)$.\\
			
			\item $A_{\lambda210}(B)=q^3(q-1)A_{\lambda221}(B)$.\\
			
			\item $A_{\lambda110}(B)=q^2A_{\lambda211}(B)$.\\
			
			\item $A_{2220}(B)=q^3(q^3-q^2+q-1)A_{2222}(B)$.\\
			
			\item $A_{2210}(B)=q^2(q^2-q+1)A_{2221}(B)$.\\
			
			\item $A_{2110}(B)=q(q-1)A_{2211}(B)$.\\
			
			\item $A_{1110}(B)=A_{2111}(B)$.\\
			
			\item $A_{\lambda\kappa00}(B)=q^4(q+1)A_{\lambda\kappa11}(B)-q^9A_{\lambda\kappa22}(B)$.\\
			
			\item $A_{\lambda200}(B)=q^4(q+1)A_{\lambda211}(B)-q^7(q^2-q+1)A_{\lambda222}(B)$.\\
			
			\item $A_{\lambda100}(B)=q^2(q^3+1)A_{\lambda111}(B)-q^5A_{\lambda221}(B)$.\\
			\item $A_{2200}(B)=q^4(q+1)A_{2211}(B)-q^5(q^2-q+1)(q^2+1)A_{2222}(B)$.\\
			
			\item $A_{2100}(B)=q^2(q^3+1)A_{2111}(B)-q^3(q^2-q+1)A_{2221}(B)$.\\
			
			\item $A_{1100}(B)=(q^3+1)(q^2+1)A_{1111}(B)-qA_{2211}(B)$.\\
			
			\item $A_{\lambda000}(B)=q^3A_{\lambda110}(B)+q^4A_{\lambda211}(B)-q^9A_{\lambda222}(B)$\\
			\text{ }\text{ }\text{ }\text{ }\text{ }\text{ }\text{ }\text{ }\text{ }\text{ }\text{ }\text{ }$=q^4(q+1)A_{\lambda211}(B)-q^9A_{\lambda222}(B)$.\\
			
			\item $A_{2000}(B)=q^3A_{2110}(B)+q^3(q-1)A_{2211}(B)$\\
			\text{ }\text{ }\text{ }\text{ }\text{ }\text{ }\text{ }\text{ }\text{ }\text{ }\text{ }\text{ }\text{ }\text{ }\text{ }\text{ }$-q^6(q^3-q^2+q-1)A_{2222}(B)$
			\quad\\
			\text{ }\text{ }\text{ }\text{ }\text{ }\text{ }\text{ }
			$=q^3(q^2-1)A_{2211}(B)-q^6(q^3-q^2+q-1)A_{2222}(B)$.\\

			\item $A_{1000}(B)=q(q^2-q+1)A_{1110}(B)+(q^2-q+1)A_{2111}(B)-q^3A_{2221}(B)$
			\quad\\
			\text{ }\text{ }\text{ }\text{ }\text{ }\text{ }\text{ }\text{ }\text{ }\text{ }\text{ }
			$=(q^3+1)A_{2111}(B)-q^3A_{2221}(B)$.\\
			
			\item $A_{0000}(B)=qA_{1100}(B)-(q^6-1)A_{1111}(B)-qA_{2211}(B)+q^6A_{2222}(B)$
			\quad\\
			\text{ }\text{ }\text{ }\text{ }\text{ }\text{ }\text{ }\text{ }\text{ }\text{ }\text{ }
			$=(q+1)(q^3+1)A_{1111}(B)-q(q+1)A_{2211}(B)+q^6A_{2222}(B)$.
			
		\end{enumerate}
\end{corollary}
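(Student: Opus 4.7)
The plan is to imitate the strategy used for Corollary \ref{corollary2.19}: apply Theorem \ref{theorem2.5} to extract a raw linear relation among the unnormalized densities $\alpha(A_{\xi_{a,b}^+},B)$ for a suitably chosen $\xi\in\Lambda_{4,s}^+$, then pass to the normalized densities $A(B)=\alpha(A,B)/\alpha(A,A)$ using the explicit self-intersection numbers listed in Lemma \ref{lemma2.20}. Throughout, we exploit the parity vanishing $A_\mu(B)=0$ whenever $\val\det A_\mu\not\equiv \val\det B \pmod{2}$ (which, since $\pi^{-1}B$ is integral, gives $\val\det B\geq 4$ and a fixed parity) to discard half of the terms produced by the theorem.

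First, I would handle the ``pure collapse'' identities (1)--(10), which are all of the form $A_{(\cdots,0)}(B)=c\cdot A_{(\cdots,k)}(B)$ for some rescaling constant $c$ and shift $k\in\{1,2\}$. Each of these arises from applying Theorem \ref{theorem2.5} with $s=1$ and $\xi=(\xi_1,\xi_2,\xi_3,0)\in\Lambda_{4,1}^+$ for appropriate choices $(\xi_1,\xi_2,\xi_3)\in\{(\lambda,\kappa,\mu),(\lambda,\kappa,2),(\lambda,\kappa,1),(\lambda,2,2),(\lambda,2,1),(\lambda,1,1),(2,2,2),(2,2,1),(2,1,1),(1,1,1)\}$. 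Unfolding $d_{4,1,0}=1,\,d_{4,1,1}=-1/q^4$ and the telescoping in the statement of the theorem yields
\begin{equation*}
\alpha(A_{\xi_{0,0}^+},B)=\tfrac{1}{q^{8}}\,\alpha(A_{\xi_{1,0}^+},B),
\end{equation*}
and the constants in identities (1)--(10) then fall out by dividing each side by the relevant self-intersection number from Lemma \ref{lemma2.20}, where the numerical ratio $\alpha(A_{\xi_{1,0}^+},A_{\xi_{1,0}^+})/\alpha(A_{\xi_{0,0}^+},A_{\xi_{0,0}^+})$ provides the required power of $q$ and the low-rank factors like $(1-1/q^2)$ or $(1+1/q^3)$.

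Next I would treat identities (11)--(16), which have the form $A_{(\cdots,0,0)}(B)=c_1 A_{(\cdots,1,1)}(B)+c_2 A_{(\cdots,2,2)}(B)$. These come from Theorem \ref{theorem2.5} applied with $s=2$ and $\xi\in\Lambda_{4,2}^+$ of the form $(\xi_1,\xi_2,0,0)$. The $s=2$ coefficients are $d_{4,2,0}=1,\,d_{4,2,1}=(q-1)/q^4,\,d_{4,2,2}=-1/q^{7}$, and after applying the theorem one obtains a relation among the six densities $\alpha(A_{\xi_{a,b}^+},B)$ for $(a,b)\in\{(0,0),(0,1),(0,2),(1,1),(1,0),(2,0)\}$; parity kills the $(0,1)$, $(1,1)$-with-odd-parity, and $(1,0)$ terms as appropriate, leaving a three-term relation. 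Normalizing both sides by the corresponding self-intersections of Lemma \ref{lemma2.20} produces precisely the stated coefficients, and in the cases $\xi_2=2$, $\xi_2=1$, or $\xi_1=\xi_2=2$ the additional factors like $(q^2-q+1)$ or $(q^3+1)$ appear from the ratio of the refined self-intersection formulas.

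Finally, for the compound identities (17)--(20), which involve $A_{(\lambda,0,0,0)}(B)$ or $A_{(0,0,0,0)}(B)$, I would apply Theorem \ref{theorem2.5} with $s=3$ (resp.\ $s=4$) to obtain the first equality in each case: a three-term (resp.\ four-term) expansion in the densities $A_{(\cdots,1,1,0)}(B),\,A_{(\cdots,2,1,1)}(B),\,A_{(\cdots,2,2,2)}(B)$ (and so on after parity vanishing), again normalized via Lemma \ref{lemma2.20}. The second equality in each of (17)--(20) is then obtained by substituting one of the already-proved identities (6), (9), (10), or (3) to eliminate the intermediate $A_{(\cdots,1,1,0)}$ or $A_{(\cdots,2,1,0)}$ term in favor of the canonical generators $A_{\cdots 11}$ and $A_{\cdots 22}$ that appear in the final formula. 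The arithmetic consolidation $q^{3}+q^{3}(q-1)$-type simplifications at the end of (17) and (18) is routine.

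The main obstacle is not any conceptual difficulty but rather the combinatorial bookkeeping: there are twenty identities, and for each one must track the correct $(\xi,s)$, the $d_{4,s,i}$ coefficients, the parity-vanishing pattern, and the ratio of self-intersections from Lemma \ref{lemma2.20}. The structural argument is in every case the same three-line derivation (Theorem \ref{theorem2.5} $\Rightarrow$ parity reduction $\Rightarrow$ normalization), so no new analytic input beyond Theorem \ref{theorem2.5}, Lemma \ref{lemma2.20}, and the earlier identities is required.
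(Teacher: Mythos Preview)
Your proposal is correct and is exactly the approach the paper takes: the paper's own proof is the single sentence ``This follows directly from Lemma \ref{lemma2.20} and Theorem \ref{theorem2.5},'' and you have simply unpacked that sentence case by case. Two cosmetic slips worth fixing in your write-up: for $s=2$ only five distinct indices $\xi_{a,b}^+$ occur (namely $(0,0),(0,1),(0,2),(1,1),(2,0)$; the pair $(1,0)$ never appears in either sum of Theorem \ref{theorem2.5}), and the second equality in (20) is obtained by substituting identity (16), not (3).
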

\begin{proof}
This follows directly from Lemma \ref{lemma2.20} and Theorem \ref{theorem2.5}.
\end{proof}

\begin{remark}
	The most important advantage of this Corollary \ref{corollary2.20} is that we can reduce the computation of representation densities for $B$ to the ones for $\pi^{-1}B$. For example, to compute $A_{1000}(B)$, we only need to compute $(q^3+1)A_{1000}(\pi^{-1}B)-q^3A_{1110}(\pi^{-1}B)$. In Section \ref{section3}, we will provide several examples of computations of representation densities by using this Corollary.
\end{remark}

\section{Special cycles and representation densities}\label{section3}
In this section, we apply Theorem \ref{theorem2.5} to understand the arithmetic intersection theory of special cycles on unitary Shimura varieties.

\subsection{Special cycles on unitary Rapoport-Zink spaces}
In this subsection, we recall some facts and definitions of unitary Rapoport-Zink spaces $\CN^h_{E/F}(1,n-1)$ and special cycles $\CZ(x)$ from \cite{Cho} and \cite{Cho2}.
In this section, we assume that $F$ is an unramified extension of $\BQ_p$. We fix integers $n$ and $0 \leq h \leq n$. Let $\breve{E}$ be the completion of a maximal unramified extension of $E$ and let $O_{\breve{E}}$ be its ring of integers. We denote by $k$ the residue field of $O_{\breve{E}}$.

Let $(\BX,i_{\BX},\lambda_{\BX})$ be the following triple.
\begin{enumerate}
	\item $\BX$ is a supersingular strict formal $O_F$-module of $F$-height $2n$ over $\BF_{q^2}$.
	
	\item $i_{\BX}:O_E \rightarrow \End\BX$ is an $O_E$-action on $\BX$ that extends the $O_F$-action on $\BX$. This satisfies the following signature condition of $(1,n-1)$: for all $a \in O_E$, we have
	\begin{equation*}
		\Charpol(i_{\BX}(a),\Lie \BX)=(T-a)(T-a^*)^{n-1}.
	\end{equation*}
	
	\item $\lambda_{\BX}:\BX \rightarrow \BX^{\vee}$ is a polarization such that the corresponding Rosati involution induces $^*$ on $O_E$. We assume that $\ker \lambda_{\BX} \subset \BX[\pi]$ and its order is $q^{2h}$.
\end{enumerate}

This triple is unique up to quasi-isogeny.

Let (Nilp) be the category of $O_{\breve{E}}$-schemes $S$ such that $\pi$ is locally nilpotent on $S$. We define $\CN^h_{E/F}(1,n-1)$ as a functor sending $S$ to the set of isomorphism classes of tuples $(X,i_X,\lambda_X,\rho_X)$.

Here, $X$ is a supersingular strict formal $O_F$-module of $F$-height $2n$ over $S$ and $i_X$ is an $O_E$-action on $X$ satisfying the signature condition of $(1,n-1)$. $\lambda_X:X \rightarrow X^{\vee}$ is a polarization such that its Rosati involution induces $^*$ on $E$. $\rho_X:X_{\overline{S}} \rightarrow \BX \times_{\BF_{q^2}}\overline{S}$ is an $O_E$-linear quasi-isogeny of height $0$, where $\overline{S}=S \times_{O_E} \BF_{q^2}$. We also assume that the following diagram commutes up to a constant in $O_F^{\times}$:
\begin{center}
	\begin{tikzcd}
		X_{\overline{S}} \arrow{r}{\lambda_{X_{\overline{S}}}}
		\arrow{d}{\rho_X}
		&X^{\vee}_{\overline{S}} \\
		\mathbb{X}_{\overline{S}}
		\arrow{r}{\lambda_{\BX_{\overline{S}}}}  &\mathbb{X}_{\overline{S}}^{\vee}
		\arrow{u}{\rho_X^{\vee}}.
	\end{tikzcd}
\end{center}

Then $\CN^h_{E/F}(1,n-1)$ is representable by a formal scheme over $\Spf O_{\breve{E}}$ which is locally formally of finite type and regular (\cite[Proposition 3.33]{Cho}).

To define special cycles on $\CN:=\CN^h_{E/F}(1,n-1)$, we need to fix a triple $(\overline{\BY},i_{\overline{\BY}},\lambda_{\overline{\BY}})$: Let $\overline{\BY}$ be a strict formal $O_{F}$-module of $F$-height 2 over $k$, and let $i_{\overline{\BY}}$ be an $O_{E}$-action satisfying the signature condition of signature $(0,1)$. Also, $\lambda_{\overline{\BY}}$ is a principal polarization. Then, we can consider the corresponding moduli space $\CN^0(0,1)$.

We define $\BV=\Hom_{O_E}(\overline{\BY},\BX) \otimes_{\BZ} \BQ$ to be the space of special homomorphisms. For each $x,y \in \BV$, we define the hermitian form $h$ on $\BV$ as
\begin{equation*}
		h(x,y)=\lambda_{\overline{\BY}}^{-1} \circ y^{\vee}  \circ \lambda_{\BX} \circ x \in \End_{O_{E}}(\overline{\BY}) \otimes \BQ \overset{i_{\overline{\BY}}^{-1}}{\simeq} E.
	\end{equation*}

For $x \in \BV$, we define the special cycles $\CZ(x)$ to be the closed formal scheme of $\CN^0 \times \CN$ as follows: For each $S$ in (Nilp), $\CZ(x)(S)$ is the set of all points $(\overline{Y},i_{\overline{Y}},\lambda_{\overline{Y}},\rho_{\overline{Y}},X,i_{X},\lambda_X,\rho_X)$ in $\CN^0\times \CN(S)$ such that the quasi-homomorphism
\begin{equation*}
	\rho_X^{-1}\circ x \circ \rho_{\overline{Y}}:\overline{Y} \times_{S} \overline{S} \rightarrow X \times_{S} \overline{S}
\end{equation*}
extends to a homomorphism from $\overline{Y}$ to $X$.

For a basis $\lbrace x_1, x_2, \dots, x_n \rbrace$ of $\BV$, we denote by $\langle \CZ(x_1), \CZ(x_2), \dots, \CZ(x_n) \rangle$ the arithmetic intersection number in $\CN=\CN^h_{E/F}(1,n-1)$:
\begin{equation*}
\langle \CZ(x_1), \CZ(x_2), \dots, \CZ(x_n) \rangle:=\chi(O_{\CZ(x_1)} \otimes_{\CN}^{\BL} \dots \otimes_{\CN}^{\BN}O_{\CZ(x_n)}).
\end{equation*}
Here $\chi$ is the Euler-Poincare characteristic and $\otimes_{\BL}$ is the derived tensor product of $O_{\CN}$-modules.

Now, we can state the following conjecture from \cite[Conjecture 3.17, Conjecture 3.25]{Cho2} and \cite[Conjecture 4.9]{Cho3}.

\begin{conjecture}\label{conjecture3.1}(\cite[Conjecture 3.17, Conjecture 3.25]{Cho2}, \cite[Conjecture 4.9]{Cho3}) For a basis $\lbrace x_1, \dots, x_{2n}\rbrace$ of $\BV$, and special cycles $\CZ(x_1),$ $\dots,$ $\CZ(x_{2n})$ in $\CN^n_{E/F}(1,2n-1)$, we have
	\begin{equation*}
		\langle \CZ(x_1),\dots,\CZ(x_{2n}) \rangle=\sum_{\lambda \in \Lambda_{2n}^+} D_{\lambda} A_{\lambda}(B)-\sum_{0 \leq i \leq n-1}\Fb_i^0 A_{(1^i,0^{2n-i})}(B).
	\end{equation*}
Here $B$ is the $2n \times 2n$ hermitian matrix $(h(x_i,x_j))$ in $X_{2n}(O_E)$ and $D_{\lambda}, \Fb_i^0$ are some constants (we refer to \cite{Cho3} for the precise definitions of these constants).
	\end{conjecture}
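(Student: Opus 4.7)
\bigskip

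\noindent\textbf{Proof proposal for Conjecture \ref{conjecture3.1}.} The plan is to follow the strategy of Li--Zhang \cite{LZ} for the original Kudla--Rapoport conjecture, adapted to the minuscule parahoric level $h=n$ studied here. The idea is to show that both sides of the claimed identity, viewed as functions of the hermitian matrix $B=(h(x_i,x_j)) \in X_{2n}(O_E)$ (ranging over non-degenerate classes), satisfy the \emph{same} finite set of recursive relations, and then to match them on a set of base cases. In this framework the objects $\sum_\lambda D_\lambda A_\lambda(B)-\sum_i \Fb_i^0 A_{(1^i,0^{2n-i})}(B)$ on the analytic side and $\langle \CZ(x_1),\dots,\CZ(x_{2n})\rangle$ on the geometric side play symmetric roles, and Theorem \ref{theorem2.5} supplies precisely the recursion needed to cut down the analytic side.

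First, I would set up the analytic side. Using Theorem \ref{theorem2.5} together with Propositions \ref{proposition2.14} and \ref{proposition2.15}, the function $B \mapsto \sum_\lambda D_\lambda A_\lambda(B)$ can be reduced, whenever $\pi^{-1}B$ is integral, to a combination of analogous expressions for $\pi^{-1}B$; this mirrors the Cho--Yamauchi reformulation used in \cite{LZ}, but now adapted to the $(1^i,0^{2n-i})$-type correction terms appropriate to the parahoric-level setting. One must also track how the secondary terms $\Fb_i^0 A_{(1^i,0^{2n-i})}(B)$ transform under these reductions; this is a routine but lengthy bookkeeping exercise using the explicit $d_{n,s,i}$.

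Next, I would establish the matching recursion on the geometric side. Decompose the intersection $\CZ(x_1)\cap\dots\cap \CZ(x_{2n})$ into a horizontal part and a vertical part (supported on the special fibre), following \cite{LZ}. The horizontal part is one-dimensional and its intersection multiplicities are computable from the theory of CM points together with Gross' and Keating's results on endomorphisms of $p$-divisible groups, so its contribution should match exactly the ``primitive" terms $\Fb_i^0 A_{(1^i,0^{2n-i})}(B)$. For the vertical part, I would use the linear invariance property of special cycles (a $GL_{2n}(O_E)$-equivariance in the basis $\{x_i\}$) together with induction on $\mathrm{val}\det B$ to derive a recursion that expresses $\langle \CZ(x_1),\dots,\CZ(x_{2n})\rangle_{\text{vert}}$ for $B$ in terms of the analogous number for $\pi^{-1}B$, matching the analytic recursion term by term. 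Base cases would be handled by low-rank explicit computations on $\CN^n_{E/F}(1,2n-1)$, for example the cases $n=1$ (Drinfeld upper half plane, cf.\ Example \ref{example3.3}) and $n=2$ with small $B$.

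The main obstacle is the geometric side: establishing the horizontal--vertical decomposition in the present parahoric setting and proving that the vertical contribution satisfies exactly the recursion dictated by Theorem \ref{theorem2.5}. In \cite{LZ} this step relies crucially on the hyperspecial level structure, so for $h=n$ one needs a substitute. A promising route is to use the Bruhat--Tits stratification of $\CN^n_{E/F}(1,2n-1)$ from \cite{Cho,Cho2} to perform the key deformation--to--the--normal--cone computations at each closed stratum; the polynomial coefficients $d_{n,s,i}$ in Theorem \ref{theorem2.5} should then arise geometrically from the Euler characteristics of the relevant projective subspaces appearing in this stratification. A secondary, combinatorial obstacle is to verify that the explicit constants $D_\lambda$ and $\Fb_i^0$ from \cite{Cho3} are compatible with both recursions simultaneously; Example \ref{example3.4} and the tables in Section \ref{section3} provide consistency checks that would guide (and ultimately pin down) the right formulation.
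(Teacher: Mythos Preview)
The statement you are attempting to prove is labeled \emph{Conjecture} in the paper, and the paper does not prove it. It is quoted from \cite{Cho2,Cho3} purely as background for the arithmetic-intersection applications in Section~\ref{section3}; the only cases known are $h=0$ and $h=1$, both due to Li--Zhang \cite{LZ}, as the remark immediately following the conjecture records. There is therefore no ``paper's own proof'' against which your proposal can be compared.

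As for the proposal itself: what you have written is a reasonable high-level outline of how one might hope to attack the conjecture by imitating \cite{LZ}, but it is not a proof. The substantive content is deferred to phrases such as ``a routine but lengthy bookkeeping exercise'', ``should match exactly'', and ``a promising route is to use the Bruhat--Tits stratification''. In particular, you yourself identify the main obstacle --- carrying out the horizontal/vertical decomposition and the deformation-to-the-normal-cone argument at parahoric level $h=n$ --- and then do not resolve it. That step is precisely where the hyperspecial hypothesis is used in \cite{LZ}, and no substitute is currently known; the paper's Theorem~\ref{theorem2.5} controls only the analytic side and says nothing about the geometric recursion you would need. Until that geometric input exists, the strategy remains a wish list rather than an argument.
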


\begin{remark}
	Conjecture \ref{conjecture3.1} gives conjectural formulas for the arithmetic intersection numbers of special cycles in $\CN^h_{E/F}(1,n-1)$ for any $h$ after some reductions. Indeed, \cite[Conjecture 3.17]{Cho2} and \cite[Conjecture 3.25]{Cho2} are not only about special cycles $\CZ$, these are also about special cycles $\CY$ (even though we will not discuss $\CY$-cycles here). In the case that $h=0$, this is the Kudla-Rapoport conjecture \cite[Conjecture 1.3]{KR2} and is proved in \cite{LZ}. The case when $h=1$ is also proved in \cite{LZ}.
\end{remark}

\subsection{Some applications}\label{subsection3.2}
In this subsection, we apply Theorem \ref{theorem2.5} to compute some arithmetic intersection numbers of special cycles and suggest a way to understand these.

\begin{example}\label{example3.3}[$\CN^1_{E/F}(1,1)$]
	In this example, we consider the arithmetic intersection numbers of special cycles in $\CN^1_{E/F}(1,1)$ and answer to the question raised in \cite[Remark 4.11]{Cho3}. In fact, this is our main motivation for Theorem \ref{theorem2.5}.
	In \cite{San}, Sankaran proved that for special homomorphisms $x,y \in \BV$, the arithmetic intersection numbers of special cycles $\langle \CZ(x),\CZ(y) \rangle$ in $\CN^1_{E/F}(1,1)$ is
	\begin{equation*}
	\dfrac{q}{(q+1)^2}\lbrace \alpha'(A_{10},B)-\dfrac{q^2}{q^2-1} \alpha(A_{00},B)\rbrace,
	\end{equation*}
where $B=\left( \begin{array}{cc} h(x,x) & h(x,y)\\ h(y,x) & h(y,y) \end{array}\right)$.
Let us write $A_{\alpha\beta}(x,y)$ for $A_{\alpha\beta}(B)$.
In terms of weighted lattice counting (i.e a version of the Cho-Yamauchi's formula \cite{CY}) in \cite{Cho3}, this can be written as
\begin{equation*}\begin{array}{ll}
		\langle \CZ(x),\CZ(y) \rangle&=-(q-1) A_{11}(x,y)-(q^2-1)\mathlarger{\sum}_{\lambda, \kappa \geq 2} A_{\lambda\kappa}(x,y)\\
		&+\mathlarger{\sum}_{\lambda \geq 2} A_{\lambda1}(x,y)+\mathlarger{\sum}_{\lambda \geq 2} A_{\lambda0}(x,y).
	\end{array}
\end{equation*}

Therefore, we have that
\begin{equation}\label{eq3.2.1}
	\begin{array}{l}
	\langle	\CZ(x),\CZ(y) \rangle -\langle \CZ(\dfrac{x}{\pi}),\CZ(\dfrac{y}{\pi}) \rangle\\
	=-(q-1) A_{11}(x,y)-(q^2-1)\mathlarger{\sum}_{\lambda, \kappa \geq 2} A_{\lambda\kappa}(x,y)
	+\mathlarger{\sum}_{\lambda \geq 2} A_{\lambda1}(x,y)+\mathlarger{\sum}_{\lambda \geq 2} A_{\lambda0}(x,y)\\\\
	+(q-1) A_{11}(\dfrac{x}{\pi},\dfrac{y}{\pi})+(q^2-1)\mathlarger{\sum}_{\lambda, \kappa \geq 2} A_{\lambda\kappa}(\dfrac{x}{\pi},\dfrac{y}{\pi}))
	-\mathlarger{\sum}_{\lambda \geq 2} A_{\lambda1}(\dfrac{x}{\pi},\dfrac{y}{\pi})-\mathlarger{\sum}_{\lambda \geq 2} A_{\lambda0}(\dfrac{x}{\pi},\dfrac{y}{\pi})
\end{array}\end{equation}\begin{equation*}\begin{array}{l}
	=-(q-1) A_{11}(x,y)-(q^2-1)\mathlarger{\sum}_{\lambda, \kappa \geq 2} A_{\lambda\kappa}(x,y)
	+\mathlarger{\sum}_{\lambda \geq 2} A_{\lambda1}(x,y)+\mathlarger{\sum}_{\lambda \geq 2} A_{\lambda0}(x,y)\\\\
	+(q-1) A_{33}(x,y)+(q^2-1)\mathlarger{\sum}_{\lambda, \kappa \geq 4} A_{\lambda\kappa}(x,y)
	-\mathlarger{\sum}_{\lambda \geq 4} A_{\lambda3}(x,y)-\mathlarger{\sum}_{\lambda \geq 4} A_{\lambda2}(x,y)
\end{array}\end{equation*}\begin{equation*}\begin{array}{l}
	=-(q-1)A_{11}(x,y)-(q^2-1)A_{22}(x,y)-(q^2-q)A_{33}(x,y)+A_{20}(x,y)+A_{31}(x,y)\\\\
	+\mathlarger{\sum}_{\lambda \geq 4} A_{\lambda0}(x,y)-q^2\mathlarger{\sum}_{\lambda \geq 4} A_{\lambda2}(x,y)+\mathlarger{\sum}_{\lambda \geq 4} A_{\lambda1}(x,y)-q^2\mathlarger{\sum}_{\lambda \geq 4} A_{\lambda3}(x,y)
\end{array}
\end{equation*}

On the other hand, by \cite[Theorem 3.14]{San2}, we know that for $y\in \BV$ with $\val(h(y,y)) \geq 2$, we have
\begin{equation*}
	\CZ(y)-\CZ(\dfrac{y}{\pi})=\sum_{\substack{y/\pi \in \Lambda\\ \Lambda \simeq A_{-1,-1}}} \BP_{\Lambda}^1+\sum_{\substack{y/\pi \in \Lambda\\ \Lambda \simeq A_{00}}} \BP_{\Lambda}^1
\end{equation*}

Here $\Lambda \simeq A_{\alpha\beta}$ means that the lattice $\Lambda$ has the hermitian matrix $A_{\alpha\beta}$.

Also, by \cite[Lemma 2.11]{San}, we have that
\begin{equation*}
	\langle \CZ(x), \BP_{\Lambda}^1 \rangle=-q 1_{\Lambda}(\dfrac{x}{\pi}),
\end{equation*}
if $\Lambda \simeq A_{-1,-1}$, and
\begin{equation*}
	\langle \CZ(x), \BP_{\Lambda}^1 \rangle=1_{\Lambda}(x),
\end{equation*}
if $\Lambda \simeq A_{00}$.

Therefore, we have that
\begin{equation*}\begin{array}{ll}
	\langle \CZ(x),\CZ(y)-\CZ(\dfrac{y}{\pi}) \rangle&=-qA_{-1,-1}(\dfrac{x}{\pi},\dfrac{y}{\pi})+A_{00}(x,\dfrac{y}{\pi})\\
	&=-qA_{11}(x,y)+A_{00}(x,\dfrac{y}{\pi}).
	\end{array}
\end{equation*}

Also, we have that
\begin{equation*}\begin{array}{l}
\langle	\CZ(x),\CZ(y) \rangle -\langle \CZ(\dfrac{x}{\pi}),\CZ(\dfrac{y}{\pi}) \rangle\\\\
=\langle \CZ(x),\CZ(y)-\CZ(\dfrac{y}{\pi})\rangle+\langle \CZ(x)-\CZ(\dfrac{x}{\pi}),\CZ(\dfrac{y}{\pi})\rangle
\\\\

=-qA_{11}(x,y)+A_{00}(x,\dfrac{y}{\pi})-qA_{11}(x,\dfrac{y}{\pi})+A_{00}(\dfrac{x}{\pi},\dfrac{y}{\pi})\\\\

=-qA_{11}(x,y)+A_{00}(x,\dfrac{y}{\pi})-qA_{11}(x,\dfrac{y}{\pi})+A_{22}(x,y).
\end{array}
\end{equation*}

Note that this equation looks very different from \eqref{eq3.2.1}.

Combining this and \eqref{eq3.2.1}, we have that
\begin{equation}\label{eq3.2.2}
	\begin{array}{l}
		-qA_{11}(x,y)+A_{00}(x,\dfrac{y}{\pi})-qA_{11}(x,\dfrac{y}{\pi})+A_{22}(x,y)\\
		=-(q-1)A_{11}(x,y)-(q^2-1)A_{22}(x,y)-(q^2-q)A_{33}(x,y)+A_{20}(x,y)+A_{31}(x,y)\\\\
		+\mathlarger{\sum}_{\lambda \geq 4} A_{\lambda0}(x,y)-q^2\mathlarger{\sum}_{\lambda \geq 4} A_{\lambda2}(x,y)+\mathlarger{\sum}_{\lambda \geq 4} A_{\lambda1}(x,y)-q^2\mathlarger{\sum}_{\lambda \geq 4} A_{\lambda3}(x,y)
	\end{array}
\end{equation}

Why do we have this equality? This question is the starting point for our Theorem \ref{theorem2.5}.

Note that $A_{00}(x,y)-qA_{11}(x,y)=1$ for any $x,y$ such that $\left(\begin{array}{cc}h(x,x)&h(x,y)\\h(y,x)&h(y,y)\end{array}\right)$ is integral. Therefore, we have that
\begin{equation*}
	A_{00}(x,y)-qA_{11}(x,y)=A_{00}(x,\dfrac{y}{\pi})-qA_{11}(x,\dfrac{y}{\pi}).
\end{equation*}

This implies that \eqref{eq3.2.2} is equivalent to
\begin{equation*}\begin{array}{ll}
	0&=A_{00}(x,y)-(q+1)A_{11}(x,y)+q^2A_{22}(x,y)-A_{20}(x,y)\\
	&+(q^2-q)A_{33}(x,y)-A_{31}(x,y)\\
	&-\mathlarger{\sum}_{\lambda \geq 4} \lbrace A_{\lambda0}(x,y)-q^2A_{\lambda2}(x,y)\rbrace-\mathlarger{\sum}_{\lambda \geq 4} \lbrace A_{\lambda1}(x,y)-q^2 A_{\lambda3}(x,y)\rbrace\\\\
	
	=&\lbrace A_{00}(x,y)-(q+1)A_{11}(x,y)+qA_{22}(x,y)\rbrace\\\\
	&-\lbrace A_{20}-q(q-1)A_{22}(x,y)\rbrace-\lbrace A_{31}(x,y)-q(q-1)A_{33}(x,y)\rbrace\\\\
	&-\mathlarger{\sum}_{\lambda \geq 4} \lbrace A_{\lambda0}(x,y)-q^2A_{\lambda2}(x,y)\rbrace-\mathlarger{\sum}_{\lambda \geq 4} \lbrace A_{\lambda1}(x,y)-q^2 A_{\lambda3}(x,y)\rbrace.
	
	\end{array}
\end{equation*}

Now, Corollary \ref{corollary2.17} gives the reason why this equality holds for any $x,y$ such that $\dfrac{1}{\pi^2}\left(\begin{array}{cc}h(x,x)&h(x,y)\\h(y,x)&h(y,y)\end{array}\right)$ is integral.
\end{example}

\begin{example}\label{example3.4}[$\CN^0_{E/F}(1,3)$] Assume that $\alpha > \beta \geq \gamma \geq \delta \geq 0$ and $\alpha$ is even. Let $L_{\beta\gamma\delta}$ be a lattice of rank $3$ in $\BV$ with the hermitian matrix $A_{\beta\gamma\delta}$ and let $x$ be a special homomorphism in $\BV$ with $\val(h(x,x))=\alpha$. Assume that $x$ is orthogonal to $L_{\beta\gamma\delta}$. By the Kudla-Rapoport conjecture \cite[Conjecture 1.3]{KR2} and its proof \cite{LZ}, we have that the arithmetic intersection number of special cycles $\langle \CZ(L_{\beta\gamma\delta}),\CZ(x)\rangle$ is $A_{0000}'(A_{\alpha\beta\gamma\delta})$. We will show that 
	\begin{equation}\label{eq3.2.3}\begin{array}{ll}
			A_{0000}'(A_{\alpha\beta\gamma\delta})&=A_{100}(A_{\beta\gamma\delta})A_{0000}'(A_{\alpha100})\\\\
			&+\mathlarger{\sum}_{\lambda \geq 3}A_{\lambda00}(A_{\beta\gamma\delta})\lbrace A_{0000}' (A_{\alpha\lambda00})-A_{0000}'(A_{\alpha(\lambda-2)00})\rbrace\\\\
			&+(1-q^2)\sum_{\lambda \geq \kappa \geq 1} A_{\lambda\kappa0}(A_{\beta\gamma\delta}) A_{1110}(A_{\alpha\lambda\kappa0})\\\\
			&+(1+q)(1-q^2)\sum_{\lambda \geq \kappa \geq \mu \geq 1} A_{\lambda\kappa\mu}(A_{\beta\gamma\delta}) A_{1110}(A_{\alpha\lambda\kappa\mu}).
		\end{array}
	\end{equation}
in some cases. This is modelled on the analytic part of \cite{LZ} (in particular, \cite[Theorem 4.2.1, Theorem 5.4.1]{LZ}). Note that this formula relates $A_{0000}'(A_{\alpha\beta\gamma\delta})$ to the counting of vertex lattices of type $3$ (i.e., its hermitian matrix is $A_{1110}$) plus additional parts (these correspond to the horizontal parts of $\CZ(L_{\beta\gamma\delta})$ by \cite[Theorem 4.2.1]{LZ}).

By the Cho-Yamauchi's formula in unitary case \cite[Corollary 3.5.3]{LZ}, we have
\begin{equation}\label{eq3.2.4}\begin{array}{ll}
		A_{0000}'(A_{\alpha\beta\gamma\delta})&=\mathlarger{\sum}_{\lambda \geq 1} A_{\lambda000}(A_{\alpha\beta\gamma\delta})+(1+q)\mathlarger{\sum}_{\lambda\geq \kappa \geq 1} A_{\lambda\kappa00}(A_{\alpha\beta\gamma\delta})\\
		&+(1+q)(1-q^2)\mathlarger{\sum}_{\lambda\geq \kappa \geq \mu\geq 1} A_{\lambda\kappa\mu0}(A_{\alpha\beta\gamma\delta})\\
		&+(1+q)(1-q^2)(1+q^3)\mathlarger{\sum}_{\lambda\geq \kappa \geq \mu \geq \eta \geq 1} A_{\lambda\kappa\mu\eta}(A_{\alpha\beta\gamma\delta}).
	\end{array}
\end{equation}

Now, let us show that \eqref{eq3.2.3} holds in the following cases.
\begin{enumerate}
	\item ($L_{\beta\gamma\delta}=L_{100}$ case) Indeed, in $L_{\beta\gamma\delta}=L_{\nu00}$ case, \eqref{eq3.2.3} holds since all $A_{\lambda00}(A_{\nu00})$ are $1$ for all $\nu \geq \lambda$. But, we will compute $A_{0000}'(A_{\alpha100})$ for later use. By \eqref{eq3.2.4} we have that
	\begin{equation*}\begin{array}{ll}
		A_{0000}'(A_{\alpha100})&=A_{1000}(A_{\alpha100})+\dots+A_{(\alpha-1)000}(A_{\alpha100})\\
		&+(1+q)\lbrace A_{2100}(A_{\alpha100})+\dots+A_{\alpha100}(A_{\alpha100)}\rbrace.
		\end{array}
	\end{equation*}

Also, by Corollary \ref{corollary2.20}, we have
\begin{flalign*}
	&\begin{cases}
A_{1000}(A_{\alpha100})=A_{10}(A_{\alpha1})=1,&\\
A_{3000}(A_{\alpha100})=A_{30}(A_{\alpha1})=q^2A_{32}(A_{\alpha1})=0,&\\
\quad\quad\quad\quad\vdots&\\
A_{(\alpha-1)000}(A_{\alpha100})=A_{(\alpha-1)0}(A_{\alpha1})=q^2A_{(\alpha-1)2}(A_{\alpha1})=0,
	\end{cases}&
\end{flalign*}
\begin{flalign*}
	&\begin{cases}
A_{2100}(A_{\alpha100})=A_{21}(A_{\alpha1})=A_{10}(A_{(\alpha-1)0})=1,&\\
\quad\quad\quad\quad\vdots&\\
A_{\alpha100}(A_{\alpha100})=1.
	\end{cases}&
\end{flalign*}
This implies that 
\begin{equation*}
	A_{0000}(A_{\alpha100})'=(q+1)\dfrac{\alpha}{2}+1.
\end{equation*}
Also \eqref{eq3.2.3} holds since $A_{100}(A_{100})=1$.\\

\item ($L_{\beta\gamma\delta}=L_{111}$ case) By \eqref{eq3.2.4} we have that
\begin{equation*}\begin{array}{ll}
		A_{0000}'(A_{\alpha111})&=A_{1000}(A_{\alpha111})+\dots+A_{(\alpha-1)000}(A_{\alpha111})\\
		&+(1+q)\lbrace A_{2100}(A_{\alpha111})+\dots+A_{\alpha111}(A_{\alpha111)}\rbrace\\
		&+(1+q)(1-q^2)\lbrace A_{1110}(A_{\alpha111})+\dots+A_{(\alpha-1)110}(A_{\alpha111})\rbrace\\
		&+(1+q)(1-q^2)(1+q^3)\lbrace A_{2111}(A_{\alpha111})+\dots+A_{\alpha111}(A_{\alpha111}).\rbrace
		\end{array}
\end{equation*}

Now, let us compute the following representation densities by using Corollary \ref{corollary2.20}.
\begin{flalign*}
	&\begin{cases}
	A_{2111}(A_{\alpha111})=A_{2}(A_{\alpha})=1,&\\
	\quad\quad\quad\quad\vdots&\\
	A_{\alpha111}(A_{\alpha111})=1,
	\end{cases}&
\end{flalign*}
\begin{flalign*}
	&\begin{cases}
		A_{1110}(A_{\alpha111})=A_{2111}(A_{\alpha111})=1,&\\
		A_{3110}(A_{\alpha111})=A_{3211}(A_{\alpha111})=0,&\\
			\quad\quad\quad\quad\vdots&\\
		A_{(\alpha-1)110}(A_{\alpha111})=q^2A_{(\alpha-1)211}(A_{\alpha111})=0,
	\end{cases}&
\end{flalign*}

\begin{flalign*}
	&\begin{cases}
		A_{2100}(A_{\alpha111})=q^2(q^3+1)A_{2111}(A_{\alpha111})=q^2(q^3+1),&\\
		\quad\quad\quad\quad\vdots&\\
		A_{\alpha100}(A_{\alpha111})=q^2(q^3+1)A_{\alpha111}(A_{\alpha111})=q^2(q^3+1),
	\end{cases}&
\end{flalign*}

\begin{flalign*}
	&\begin{cases}
		A_{1000}(A_{\alpha111})=(q^3+1)A_{2111}(A_{\alpha111})=(q^3+1),&\\
		A_{3000}(A_{\alpha111})=q^4(q+1)A_{3211}(A_{\alpha111})=0,&\\
		\quad\quad\quad\quad\vdots&\\
		A_{(\alpha-1)000}(A_{\alpha111})=q^4(q+1)A_{(\alpha-1)211}(A_{\alpha111})=0.
	\end{cases}&
\end{flalign*}

Therefore, we have that
\begin{equation*}\begin{array}{ll} A_{0000}'(A_{\alpha111})&=\dfrac{\alpha}{2}(1+q)(1-q^2)(1+q^3)+(1+q)(1-q^2)+\dfrac{\alpha}{2}q^2(q^3+1)+(q^3+1)\\\\
		&=\dfrac{\alpha}{2}(q+1)(q^3+1)+2+q-q^2.
		\end{array}
	\end{equation*}

Now, note that $A_{100}(A_{111})=(q^3+1)A_{111}(A_{111})=q^3+1$, and $A_{1110}(A_{\alpha111})=1$. This implies that
\begin{equation*}\begin{array}{l}
	A_{100}(A_{111})A_{0000}'(A_{\alpha100})+(1+q)(1-q^2)A_{1110}(A_{\alpha111})\\
	=(q^3+1)\lbrace (q+1)\dfrac{\alpha}{2}+1 \rbrace+(1+q)(1-q^2)\\\\
	=(q^3+1)(q+1)\dfrac{\alpha}{2}+2+q-q^2.
	\end{array}
\end{equation*}
Therefore, \eqref{eq3.2.3} holds in this case.\\

\item ($L_{\beta\gamma\delta}=L_{300}$ case) Indeed, in $L_{\beta\gamma\delta}=L_{\nu00}$ case, \eqref{eq3.2.3} holds since all $A_{\lambda00}(A_{\nu00})$ are $1$ for all $\nu \geq \lambda$. But, we will compute $A_{0000}'(A_{\alpha300})$ and $A_{0000}'(A_{\alpha300})-A_{0000}'(A_{\alpha100})$ for later use. By \eqref{eq3.2.4} we have that
\begin{equation*}\begin{array}{ll}
		A_{0000}'(A_{\alpha300})&=A_{1000}(A_{\alpha300})+\dots+A_{(\alpha-1)000}(A_{\alpha300})\\
		&+(1+q)\lbrace A_{2100}(A_{\alpha300})+\dots+A_{\alpha100}(A_{\alpha300})\\
		&+A_{3200}(A_{\alpha300})+A_{5200}(A_{\alpha300})+ \dots+A_{(\alpha-1)200}(A_{\alpha300})\\
		&+A_{4300}(A_{\alpha300})+A_{6200}(A_{\alpha300})+ \dots+A_{\alpha300}(A_{\alpha300})\rbrace
	\end{array}
\end{equation*}
Now, let us compute the following representation densities by using Corollary \ref{corollary2.20}.
\begin{flalign*}
	&\begin{cases}
		A_{4300}(A_{\alpha300})=A_{1}(A_{\alpha-3})=1,&\\
		\quad\quad\quad\quad\vdots&\\
		A_{\alpha300}(A_{\alpha300})=1,
	\end{cases}&
\end{flalign*}

\begin{flalign*}
	&\begin{cases}
		A_{3200}(A_{\alpha300})=A_{10}(A_{(\alpha-2)1})=A_{21}(A_{(\alpha-2)1})=A_{10}(A_{(\alpha-3)0})=1,&\\
		A_{5200}(A_{\alpha300})=A_{30}(A_{(\alpha-2)1})=q^2A_{32}(A_{(\alpha-2)1})=0,&\\
		\quad\quad\quad\quad\vdots&\\
		A_{(\alpha-1)200}(A_{\alpha300})=A_{(\alpha-3)0}(A_{(\alpha-2)1})=q^2A_{(\alpha-3)2}(A_{(\alpha-2)1})=0,
	\end{cases}&
\end{flalign*}

\begin{flalign*}
	&\begin{cases}
		A_{2100}(A_{\alpha300})=A_{10}(A_{(\alpha-1)2})=1,&\\
		A_{4100}(A_{\alpha300})=A_{30}(A_{(\alpha-1)2})=q^2A_{32}(A_{(\alpha-1)2})=q^2A_{10}(A_{(\alpha-3)0})=q^2,&\\
		\quad\quad\quad\quad\vdots&\\
		A_{\alpha100}(A_{\alpha300})=A_{(\alpha-1)0}(A_{(\alpha-1)2})=q^2A_{(\alpha-1)2}(A_{(\alpha-1)2})=q^2,
			\end{cases}&
	\end{flalign*}

\begin{flalign*}
	&\begin{cases}
		A_{1000}(A_{\alpha300})=A_{10}(A_{\alpha3})=1,&\\
		A_{3000}(A_{\alpha300})=A_{30}(A_{\alpha3})=q^2A_{32}(A_{\alpha3})=q^2A_{10}(A_{(\alpha-2)1})=q^2,&\\
		A_{5000}(A_{\alpha300})=A_{50}(A_{\alpha3})=q^2A_{52}(A_{\alpha3})&\\
		\quad\quad\quad\quad\quad\quad\quad\quad\quad=q^2A_{30}(A_{(\alpha-2)1})=q^4A_{32}(A_{(\alpha-2)1})=0,&\\
		\quad\quad\quad\quad\vdots&\\
		A_{(\alpha-1)000}(A_{\alpha300})=A_{(\alpha-1)0}(A_{\alpha3})=q^2A_{(\alpha-1)2}(A_{\alpha3})&\\
		\quad\quad\quad\quad\quad\quad\quad\quad\quad=q^2A_{(\alpha-3)0}(A_{(\alpha-2)1})=q^4A_{(\alpha-3)2}(A_{(\alpha-2)1})=0
	\end{cases}&
\end{flalign*}
Therefore, we have that
\begin{equation*}
	\begin{array}{ll}
		A_{0000}'(A_{\alpha300})&=1+q^2+(1+q)q^2\lbrace\dfrac{\alpha}{2}-1\rbrace+(1+q)+(1+q)+(1+q)\lbrace\dfrac{\alpha}{2}-1\rbrace\\
		&=(q+1)(q^2+1)\dfrac{\alpha}{2}-q^3+q+2.
		\end{array}
\end{equation*}

Also, we have that
\begin{equation*}
		A_{0000}'(A_{\alpha300})-A_{0000}'(A_{\alpha100})=q^2(q+1)\dfrac{\alpha}{2}-q^3+q+1.
\end{equation*}

\quad\\

\item ($L_{\beta\gamma\delta}=L_{311})$ case) By \eqref{eq3.2.4} we have that
\begin{equation*}\begin{array}{ll}
		&A_{0000}'(A_{\alpha311})\\
		&=A_{1000}(A_{\alpha311})+\dots+A_{(\alpha-1)000}(A_{\alpha311})\\\\
		&+(1+q)\lbrace A_{2100}(A_{\alpha311})+\dots+A_{\alpha100}(A_{\alpha311})\\
		&\quad\quad\quad\quad+A_{3200}(A_{\alpha311})+A_{5200}(A_{\alpha311})+ \dots+A_{(\alpha-1)200}(A_{\alpha311})\\
		&\quad\quad\quad\quad+A_{4300}(A_{\alpha311})+A_{6200}(A_{\alpha311})+ \dots+A_{\alpha300}(A_{\alpha311})\rbrace\\\\
		&+(1+q)(1-q^2)\lbrace A_{1110}(A_{\alpha311})+\dots+A_{(\alpha-1)110}(A_{\alpha311})\\
		&\quad\quad\quad\quad\quad\quad\quad+A_{2210}(A_{\alpha311})+\dots+A_{\alpha210}(A_{\alpha311})\\
		&\quad\quad\quad\quad\quad\quad\quad+A_{3310}(A_{\alpha311})+\dots+A_{(\alpha-1)310}(A_{\alpha311})\rbrace\\\\
		
		&+(1+q)(1-q^2)(1+q^3)\lbrace A_{2111}(A_{\alpha311})+\dots+A_{\alpha111}(A_{\alpha311})\\
		&\quad\quad\quad\quad\quad\quad\quad\quad\quad+A_{3211}(A_{\alpha311})+\dots+A_{(\alpha-1)211}(A_{\alpha311})\\
		&\quad\quad\quad\quad\quad\quad\quad\quad\quad+A_{4311}(A_{\alpha311})+\dots+A_{\alpha311}(A_{\alpha311})\rbrace
	\end{array}
\end{equation*}

Now, let us compute the following representation densities by using Corollary \ref{corollary2.20}.
\begin{flalign*}
	&\begin{cases}
		A_{4311}(A_{\alpha311})=1,&\\
		\qquad\qquad \vdots&\\
		A_{\alpha311}(A_{\alpha311})=1,
		\end{cases}&
\end{flalign*}

\begin{flalign*}
&\begin{cases}
	A_{3211}(A_{\alpha311})=A_{10}(A_{(\alpha-2)1})=1,&\\
	A_{5211}(A_{\alpha311})=A_{30}(A_{(\alpha-2)1})=q^2A_{32}(A_{(\alpha-2)1})=0,&\\
			\qquad\qquad\vdots&\\
	A_{(\alpha-1)211}=0 \text{ (since }A_{(\alpha-1)211}(A_{\alpha311}) \leq A_{5211}(A_{\alpha311})\text{)},
	\end{cases}&
\end{flalign*}

\begin{flalign*}
&\begin{cases}
A_{2111}(A_{\alpha311})=A_{10}(A_{(\alpha-1)2})=1,&\\
	A_{4111}(A_{\alpha311})=A_{30}(A_{(\alpha-1)2})=q^2A_{32}(A_{(\alpha-1)2})=q^2,&\\
	\qquad\qquad\vdots&\\
	A_{\alpha111}(A_{\alpha311})=A_{(\alpha-1)0}(A_{(\alpha-1)2})=q^2A_{(\alpha-1)2}(A_{(\alpha-1)2})=q^2,
	\end{cases}&
\end{flalign*}

\begin{flalign*}
&\begin{cases} A_{3310}(A_{\alpha311})=q^4A_{3321}(A_{\alpha311})=0,&\\
		\qquad\qquad\vdots&\\
		A_{(\alpha-1)310}(A_{\alpha311})=0,
		\end{cases}&
\end{flalign*}

\begin{flalign*}
	&\begin{cases}	A_{2210}(A_{\alpha311})=q^2(q^2-q+1)A_{2221}(A_{\alpha311})=0,&\\
		\qquad\qquad\vdots&\\
		A_{\alpha210}(A_{\alpha311})=0,
		\end{cases}&
\end{flalign*}

\begin{flalign*}
	&\begin{cases}	A_{1110}(A_{\alpha311})=A_{2111}(A_{\alpha311})=1,&\\
	A_{3110}(A_{\alpha311})=q^2A_{3211}(A_{\alpha311})=q^2,&\\
	A_{5110}(A_{\alpha311})=q^2A_{5211}(A_{\alpha311})=0&\\
	\qquad\qquad\vdots&\\
	A_{(\alpha-1)110}(A_{\alpha311})=0,
	\end{cases}&
\end{flalign*}

\begin{flalign*}
	&\begin{cases}	A_{4300}(A_{\alpha311})=q^4(q+1)A_{4311}(A_{\alpha311})=q^4(q+1),&\\
	\qquad\qquad\vdots&\\
		A_{\alpha300}(A_{\alpha311})=q^4(q+1)A_{\alpha311}(A_{\alpha311})=q^4(q+1),
	\end{cases}&
\end{flalign*}
\begin{flalign*}
	&\begin{cases}	A_{3200}(A_{\alpha311})=q^4(q+1)A_{3211}(A_{\alpha311})=q^4(q+1),&\\
		A_{5200}(A_{\alpha311})=q^4(q+1)A_{5211}(A_{\alpha311})=0,&\\
		\qquad\qquad\vdots&\\
		A_{(\alpha-1)200}(A_{\alpha311})=0,
	\end{cases}&
\end{flalign*}
\begin{flalign*}
	&\begin{cases}	A_{2100}(A_{\alpha311})=q^2(q^3+1)A_{2111}(A_{\alpha311})=q^2(q^3+1),&\\
		A_{4100}(A_{\alpha311})=q^2(q^3+1)A_{4111}(A_{\alpha311})=q^4(q^3+1),&\\
		\qquad\qquad\vdots&\\
		A_{\alpha100}(A_{\alpha311})=q^2(q^3+1)A_{\alpha111}(A_{\alpha311})=q^4(q^3+1),
	\end{cases}&
\end{flalign*}
\begin{flalign*}
	&\begin{cases}	A_{1000}(A_{\alpha311})=(q^3+1)A_{2111}(A_{\alpha311})=(q^3+1),&\\
		A_{3000}(A_{\alpha311})=q^4(q+1)A_{3211}(A_{\alpha311})=q^4(q+1),&\\
		A_{5000}(A_{\alpha311})=q^4(q+1)A_{5211}(A_{\alpha311})=0,&\\
		\qquad\qquad\vdots&\\
		A_{(\alpha-1)000}(A_{\alpha311})=q^4(q+1)A_{(\alpha-1)211}(A_{\alpha311})=0,
	\end{cases}&
\end{flalign*}

Therefore, we have that
\begin{equation*}\begin{array}{l}
	A_{0000}'(A_{\alpha311})\\\\
	=(q^3+1)+q^4(q+1)\\\\
	+(1+q) \lbrace (\dfrac{\alpha}{2}-1)q^4(q^3+1)+q^2(q^3+1)+q^4(q+1)+(\dfrac{\alpha}{2}-1)q^4(q+1)\rbrace\\\\
	
	+(1+q)(1-q^2)\lbrace q^2+1 \rbrace\\\\
	
	+(1+q)(1-q^2)(1+q^3)\lbrace (\dfrac{\alpha}{2}-1)q^2+1+1+(\dfrac{\alpha}{2}-1)\rbrace\\\\
	
	=\dfrac{\alpha}{2}(q+1)(q^5+q^4+q^3+1)-(q+1)(q^5-q^3+q-3).
	\end{array}
\end{equation*}

On the other hand, we have that
\begin{flalign*}
	&\begin{cases}	
		A_{100}(A_{311})=(q^3+1)A_{111}(A_{311})=(q^3+1),&\\
		A_{300}(A_{311})=q^2(q+1)A_{311}(A_{311})=q^2(q+1),&\\
		A_{111}(A_{111})=1,&\\
		A_{311}(A_{311})=1,
	\end{cases}&
\end{flalign*}

and
\begin{flalign*}
	&\begin{cases}	
		A_{0000}'(A_{\alpha100})=(q+1)\dfrac{\alpha}{2}+1,&\\
		A_{0000}'(A_{\alpha300})-A_{0000}'(A_{\alpha100})=q^2(q+1)\dfrac{\alpha}{2}-q^3+q+1,&\\
		A_{1110}(A_{\alpha111})=1,&\\
		A_{1110}(A_{\alpha311})=1.
	\end{cases}&
\end{flalign*}

These imply that
\begin{equation*}
	\begin{array}{l}
		A_{100}(A_{311})A_{0000}'(A_{\alpha100})+A_{300}(A_{311})\lbrace A_{0000}'\lbrace A_{\alpha300})-A_{0000}'(A_{\alpha100})\rbrace\\
		+(1+q)(1-q^2)A_{111}(A_{311})A_{1110}(A_{\alpha111})+(1+q)(1-q^2)A_{311}(A_{311})A_{1110}(A_{\alpha311})\\
		=(q^3+1)\lbrace (q+1)\dfrac{\alpha}{2}+1 \rbrace+ q^2(q+1)\lbrace q^2(q+1)\dfrac{\alpha}{2}-q^3+q+1 \rbrace\\
		+(1+q)(1-q^2)+(1+q)(1-q^2)\\\\
		=\dfrac{\alpha}{2}(q+1)(q^5+q^4+q^3+1)-(q+1)(q^5-q^3+q-3).
	\end{array}
\end{equation*}

Therefore, \eqref{eq3.2.3} holds in this case.\\

\item ($L_{\beta\gamma\delta}=L_{210})$ case) By \eqref{eq3.2.4} we have that
\begin{equation*}\begin{array}{ll}
		&A_{0000}'(A_{\alpha210})\\
		&=A_{1000}(A_{\alpha210})+\dots+A_{(\alpha-1)210}(A_{\alpha210})\\\\
		&+(1+q)\lbrace A_{2100}(A_{\alpha210})+\dots+A_{\alpha100}(A_{\alpha210})\\
		&\quad\quad\quad\quad+A_{3200}(A_{\alpha210})+A_{5200}(A_{\alpha210})+ \dots+A_{(\alpha-1)200}(A_{\alpha210})\\\\
		&+(1+q)(1-q^2)\lbrace A_{1110}(A_{\alpha210})+\dots+A_{(\alpha-1)110}(A_{\alpha210})\\
		&\quad\quad\quad\quad\quad\quad\quad+A_{2210}(A_{\alpha210})+\dots+A_{\alpha210}(A_{\alpha210})\rbrace
	\end{array}
\end{equation*}
Now, let us compute the following representation densities by using Corollary \ref{corollary2.20}.
\begin{flalign*}
	&\begin{cases}	A_{2210}(A_{\alpha210})=A_{2}(A_{\alpha})=1,&\\
		\qquad\qquad\vdots&\\
	A_{\alpha210}(A_{\alpha210})=1,&
	\end{cases}&
\end{flalign*}
\begin{flalign*}
	&\begin{cases}	
		A_{1110}(A_{\alpha210})=A_{111}(A_{\alpha21})=A_{000}(A_{(\alpha-1)10})=A_{00}(A_{(\alpha-1)1})=(q+1),&\\
		A_{3110}(A_{\alpha210})=A_{311}(A_{\alpha21})=A_{20}(A_{(\alpha-1)1})=q(q-1)A_{22}(A_{(\alpha-1)1})=0,&\\
		\qquad\qquad\vdots&\\
		A_{(\alpha-1)110}(A_{\alpha210})=0,&
	\end{cases}&
\end{flalign*}
\begin{flalign*}
	&\begin{cases}	
		A_{3200}(A_{\alpha210})=A_{320}(A_{\alpha21})=q^3(q-1)A_{322}(A_{\alpha21})=0,&\\
			\qquad\qquad\vdots&\\
			A_{(\alpha-1)200}(A_{\alpha2100})=0,&
	\end{cases}&
\end{flalign*}
\begin{flalign*}
	&\begin{cases}	
		A_{2100}(A_{\alpha210})=A_{210}(A_{\alpha21})=q(q-1)A_{221}(A_{\alpha21})=q(q-1),&\\
		A_{4100}(A_{\alpha210})=A_{410}(A_{\alpha21})=q^2A_{421}(A_{\alpha21})=q^2,&\\
		\qquad\qquad\vdots&\\
		A_{\alpha100}(A_{\alpha210})=A_{\alpha10}(A_{\alpha21})=q^2A_{\alpha21}(A_{\alpha21})=q^2,
	\end{cases}&
\end{flalign*}
\begin{flalign*}
	&\begin{cases}	
		A_{1000}(A_{\alpha210})=A_{100}(A_{\alpha21})=(q^3+1)A_{111}(A_{\alpha21})-qA_{221}(A_{\alpha21})&\\
		\qquad\qquad\quad\quad=(q^3+1)(q+1)-q     ,&\\
		A_{3000}(A_{\alpha210})=A_{300}(A_{\alpha21})=q^2(q+1)A_{311}(A_{\alpha21})-q^5A_{322}(A_{\alpha21})=0,&\\
		\qquad\qquad\vdots&\\
		A_{(\alpha-1)000}(A_{\alpha210})=0.
	\end{cases}&
\end{flalign*}

Therefore, we have that
\begin{equation*}
	\begin{array}{ll}
		A_{0000}'(A_{\alpha210})&=(q^3+1)(q+1)-q+(1+q)\lbrace(\dfrac{\alpha}{2}-1)q^2+q(q-1)\rbrace\\
		&+(1+q)(1-q^2)\lbrace (q+1)+\dfrac{a}{2}\rbrace\\
		&=\dfrac{a}{2}(q+1)-q^3-q^2+q+2.
		\end{array}
\end{equation*}

On the other hand, we have that $A_{100}(A_{210})=A_{10}(A_{21})=1$, and hence
\begin{equation*}\begin{array}{l}
	A_{100}(A_{210})A_{0000}'(A_{\alpha100})+(1-q^2)A_{210}(A_{210})A_{1110}(A_{\alpha210})\\
	=(q+1)\dfrac{\alpha}{2}+1+(1-q^2)(q+1)=\dfrac{\alpha}{2}(q+1)-q^3-q^2+q+2.
	\end{array}
\end{equation*}
This shows that \eqref{eq3.2.3} holds in this case.\\\\

\item ($L_{\beta\gamma\delta}=L_{421})$ case) By \eqref{eq3.2.4} we have that
\begin{equation}\label{eq3.2.5}\begin{array}{ll}
		&A_{0000}'(A_{\alpha421})\\
		&=A_{1000}(A_{\alpha421})+\dots+A_{(\alpha-1)000}(A_{\alpha421})\\\\
		&+(1+q)\lbrace A_{2100}(A_{\alpha421})+\dots+A_{\alpha100}(A_{\alpha421})\\
		&\quad\quad\quad\quad+A_{3200}(A_{\alpha421})+A_{5200}(A_{\alpha421})+ \dots+A_{(\alpha-1)200}(A_{\alpha421})\\
		&\quad\quad\quad\quad+A_{4300}(A_{\alpha421})+A_{6200}(A_{\alpha421})+ \dots+A_{\alpha300}(A_{\alpha421})\\
		&\quad\quad\quad\quad+A_{5400}(A_{\alpha421})+A_{7400}(A_{\alpha421})+ \dots+A_{(\alpha-1)400}(A_{\alpha421})\rbrace\\
\end{array}
\end{equation}		
\begin{equation*}\begin{array}{ll}				
		&+(1+q)(1-q^2)\lbrace A_{1110}(A_{\alpha421})+\dots+A_{(\alpha-1)110}(A_{\alpha421})\\
		&\quad\quad\quad\quad\quad\quad\quad+A_{2210}(A_{\alpha421})+\dots+A_{\alpha210}(A_{\alpha421})\\
		&\quad\quad\quad\quad\quad\quad\quad+A_{3310}(A_{\alpha421})+\dots+A_{(\alpha-1)310}(A_{\alpha421})\\
		&\quad\quad\quad\quad\quad\quad\quad+A_{4410}(A_{\alpha421})+\dots+A_{\alpha410}(A_{\alpha421})\\
		&\quad\quad\quad\quad\quad\quad\quad+A_{3220}(A_{\alpha421})+\dots+A_{(\alpha-1)220}(A_{\alpha421})\\
		&\quad\quad\quad\quad\quad\quad\quad+A_{4320}(A_{\alpha421})+\dots+A_{\alpha320}(A_{\alpha421})\\
		&\quad\quad\quad\quad\quad\quad\quad+A_{5420}(A_{\alpha421})+\dots+A_{(\alpha-1)420}(A_{\alpha421})
		\rbrace\quad\quad\quad\quad\quad
\end{array}
\end{equation*}		
\begin{equation*}\begin{array}{ll}		
		&+(1+q)(1-q^2)(1+q^3)\lbrace A_{2111}(A_{\alpha421})+\dots+A_{\alpha111}(A_{\alpha421})\\
		&\quad\quad\quad\quad\quad\quad\quad\quad\quad\quad\quad+A_{3211}(A_{\alpha421})+\dots+A_{(\alpha-1)211}(A_{\alpha421})\\
		&\quad\quad\quad\quad\quad\quad\quad\quad\quad\quad\quad+A_{4311}(A_{\alpha421})+\dots+A_{\alpha311}(A_{\alpha421})\\
		&\quad\quad\quad\quad\quad\quad\quad\quad\quad\quad\quad+A_{5411}(A_{\alpha421})+\dots+A_{(\alpha-1)411}(A_{\alpha421})\\
		&\quad\quad\quad\quad\quad\quad\quad\quad\quad\quad\quad+A_{2221}(A_{\alpha421})+\dots+A_{\alpha221}(A_{\alpha421})\\
		&\quad\quad\quad\quad\quad\quad\quad\quad\quad\quad\quad+A_{3321}(A_{\alpha421})+\dots+A_{(\alpha-1)321}(A_{\alpha421})\\
		&\quad\quad\quad\quad\quad\quad\quad\quad\quad\quad\quad+A_{4421}(A_{\alpha421})+\dots+A_{\alpha421}(A_{\alpha421}).
		\rbrace\quad\quad
	\end{array}
\end{equation*}

Now, let us compute the following representation densities by using Corollary \ref{corollary2.20}.
\begin{flalign*}
	&\begin{cases}	
		A_{4421}(A_{\alpha421})=A_{4}(A_{\alpha})=1,&\\
		\qquad\qquad\vdots&\\
		A_{\alpha421}(A_{\alpha421})=1,
	\end{cases}&
\end{flalign*}
\begin{flalign*}
	&\begin{cases}	
		A_{3321}(A_{\alpha421})=A_{00}(A_{(\alpha-3)1})=(q+1),&\\
		A_{5321}(A_{\alpha421})=A_{53}(A_{\alpha4})=A_{20}(A_{(\alpha-3)1})=q(q-1)A_{22}(A_{(\alpha-3)1})=0,&\\
		\qquad\qquad\vdots&\\
		A_{(\alpha-1)321}(A_{\alpha421})=0,
	\end{cases}&
\end{flalign*}
\begin{flalign*}
	&\begin{cases}	
		A_{2221}(A_{\alpha421})=A_{00}(A_{(\alpha-2)2})=(q+1)A_{11}(A_{(\alpha-2)2})-qA_{22}(A_{(\alpha-2)2})&\\
		\quad\quad\quad\quad\quad=(q+1)A_{00}(A_{(\alpha-3)1})-q=(q^2+q+1),&\\
		A_{4221}(A_{\alpha421})=A_{20}(A_{(\alpha-2)2})=q(q-1)A_{22}(A_{(\alpha-2)2})=q(q-1),&\\
		A_{6221}(A_{\alpha421})=A_{40}(A_{(\alpha-2)2})=q^2A_{42}(A_{(\alpha-2)2})=q^2,&\\
		\qquad\qquad\vdots&\\
		A_{\alpha221}(A_{\alpha421})=A_{(\alpha-2)0}(A_{(\alpha-2)2})=q^2A_{(\alpha-2)2}(A_{(\alpha-2)2})=q^2,
	\end{cases}&
\end{flalign*}

\begin{flalign*}
	&\begin{cases}
		A_{5411}(A_{\alpha421})=A_{430}(A_{(\alpha-1)31})=q^4A_{432}(A_{(\alpha-1)31})=0,&\\
		\qquad\qquad\vdots&\\
		A_{(\alpha-1)411}(A_{\alpha421})=0,
	\end{cases}&
\end{flalign*}
\begin{flalign*}
	&\begin{cases}
		A_{4311}(A_{\alpha421})=A_{320}(A_{(\alpha-1)31})=q^3(q-1)_{332}(A_{(\alpha-1)31})=0,&\\
		\qquad\qquad\vdots&\\
		A_{\alpha311}(A_{\alpha421})=0,
	\end{cases}&
\end{flalign*}
\begin{flalign*}
	&\begin{cases}
		A_{3211}(A_{\alpha421})=A_{210}(A_{(\alpha-1)31})=q(q-1)A_{221}(A_{(\alpha-1)31})&\\
		\quad\quad\quad\quad\quad\quad\quad\quad=q(q-1)A_{00}(A_{(\alpha-3)1})=q(q^2-1),&\\
		A_{5211}(A_{\alpha421})=A_{521}(A_{\alpha42})=A_{410}(A_{(\alpha-1)31})=q^2A_{421}(A_{(\alpha-1)31})&\\
		\quad\quad\quad\quad\quad\quad=q^2A_{20}(A_{(\alpha-3)1})=0 ,&\\
		\qquad\qquad\vdots&\\
		A_{(\alpha-1)211}(A_{\alpha421})=0,
	\end{cases}&
\end{flalign*}
\begin{flalign*}
	&\begin{cases}
		A_{2111}(A_{\alpha421})=A_{100}(A_{(\alpha-1)31})=(q^3+1)A_{111}(A_{(\alpha-1)31})-qA_{221}(A_{(\alpha-1)31})&\\
		\quad\quad\quad\quad\quad\quad\quad\quad\quad\quad\quad\quad\quad=(q^3+1)A_{00}(A_{(\alpha-2)2})-qA_{00}(A_{(\alpha-3)1})&\\
		\quad\quad\quad\quad\quad\quad\quad\quad\quad\quad\quad\quad\quad=(q^3+1)(q^2+q+1)-q(q+1),&\\
		A_{4111}(A_{\alpha421})=A_{300}(A_{(\alpha-1)31})=q^2(q+1)A_{311}(A_{(\alpha-1)31})&\\
		\quad\quad\quad\quad=q^2(q+1)A_{20}(A_{(\alpha-2)2})=q^3(q^2-1)A_{22}(A_{(\alpha-2)2})=q^3(q^2-1),
			\end{cases}&
	\end{flalign*}
\begin{flalign*}
	&\begin{cases}
		A_{6111}(A_{\alpha421})=A_{500}(A_{(\alpha-1)31})=q^2(q+1)A_{511}(A_{(\alpha-1)31})&\\
		\quad\quad\quad\quad\quad=q^2(q+1)A_{40}(A_{(\alpha-2)2})=q^4(q+1)A_{42}(A_{(\alpha-2)})=q^4(q+1),&\\
		\qquad\qquad\vdots&\\
		A_{\alpha111}(A_{\alpha421})=A_{(\alpha-1)00}(A_{(\alpha-1)31})=q^2(q+1)A_{(\alpha-1)11}(A_{(\alpha-1)31})
		&\\
		\quad\quad=q^2(q+1)A_{(\alpha-2)0}(A_{(\alpha-2)2})=q^4(q+1)A_{(\alpha-2)2}(A_{(\alpha-2)})=q^4(q+1),
	\end{cases}&
\end{flalign*}
\begin{flalign*}
	&\begin{cases}
		A_{5420}(A_{\alpha421})=q^5(q-1)A_{5422}(A_{\alpha421})=0,&\\
		\qquad\qquad\vdots&\\
		A_{(\alpha-1)420}(A_{\alpha421})=0,
	\end{cases}&
\end{flalign*}
\begin{flalign*}
	&\begin{cases}
		A_{4320}(A_{\alpha421})=q^5(q-1)A_{4322}(A_{\alpha421})=0,&\\
		\qquad\qquad\vdots&\\
		A_{\alpha320}(A_{\alpha421})=0,
	\end{cases}&
\end{flalign*}	
\begin{flalign*}
	&\begin{cases}
		A_{3220}(A_{\alpha421})=q^4(q^2-q+1))A_{3222}(A_{\alpha421})=0,&\\
		\qquad\qquad\vdots&\\
		A_{(\alpha-1)220}(A_{\alpha421})=0,
	\end{cases}&
\end{flalign*}	
\begin{flalign*}
	&\begin{cases}
		A_{4410}(A_{\alpha421})=q^4A_{4421}(A_{\alpha421})=q^4,&\\
		\qquad\qquad\vdots&\\
		A_{\alpha410}(A_{\alpha421})=q^4A_{\alpha421}(A_{\alpha421})=q^4,
	\end{cases}&
\end{flalign*}	
\begin{flalign*}
	&\begin{cases}
		A_{3310}(A_{\alpha421})=q^4A_{3321}(A_{\alpha421})=q^4(q+1),&\\
		A_{5310}(A_{\alpha421})=q^4A_{5321}(A_{\alpha421})=0,&\\
		\qquad\qquad\vdots&\\
		A_{(\alpha-1)310}(A_{\alpha421})=0,
	\end{cases}&
\end{flalign*}	
\begin{flalign*}
	&\begin{cases}
		A_{2210}(A_{\alpha421})=q^2(q^2-q+1)A_{2221}(A_{\alpha421})=q^2(q^2-q+1)(q^2+q+1),&\\
		A_{4210}(A_{\alpha421})=q^3(q-1)A_{4221}(A_{\alpha421})=q^4(q-1)^2,&\\
		A_{6210}(A_{\alpha421})=q^3(q-1)A_{6221}(A_{\alpha421})=q^5(q-1),&\\
		\qquad\qquad\vdots&\\
		A_{\alpha210}(A_{\alpha421})=q^3(q-1)A_{\alpha221}(A_{\alpha421})=q^5(q-1),
	\end{cases}&
\end{flalign*}
\begin{flalign*}
	&\begin{cases}
		A_{1110}(A_{\alpha421})=A_{2111}(A_{\alpha421})=(q^3+1)(q^2+q+1)-q(q+1),&\\
		A_{3110}(A_{\alpha421})=q^2A_{3211}(A_{\alpha421})=q^3(q^2-1),&\\
		A_{5110}(A_{\alpha421})=q^2A_{5211}(A_{\alpha421})=0,&\\
		\qquad\qquad\vdots&\\
		A_{(\alpha-1)110}(A_{\alpha421})=0,
	\end{cases}&
\end{flalign*}	
\begin{flalign*}
	&\begin{cases}
		A_{5400}(A_{\alpha421})=q^4(q+1)A_{5411}(A_{\alpha421})=0,&\\
		\qquad\qquad\vdots&\\
		A_{(\alpha-1)400}(A_{\alpha421})=0,
	\end{cases}&
\end{flalign*}	
\begin{flalign*}
	&\begin{cases}
		A_{4300}(A_{\alpha421})=q^4(q+1)A_{4311}(A_{\alpha421})=0,&\\
		\qquad\qquad\vdots&\\
		A_{\alpha300}(A_{\alpha421})=0,
	\end{cases}&
\end{flalign*}	
\begin{flalign*}
	&\begin{cases}
		A_{3200}(A_{\alpha421})=q^4(q+1)A_{3211}(A_{\alpha421})=q^5(q+1)(q^2-1),&\\
		A_{5200}(A_{\alpha421})=q^4(q+1)A_{5211}(A_{\alpha421})=0,&\\
		\qquad\qquad\vdots&\\
		A_{(\alpha-1)200}(A_{\alpha421})=0,
	\end{cases}&
\end{flalign*}	
\begin{flalign*}
	&\begin{cases}
		A_{2100}(A_{\alpha421})=q^2(q^3+1)A_{2111}(A_{\alpha421})-q^3(q^2-q+1)A_{2221}(A_{\alpha421})&\\
		=q^2(q^3+1)\lbrace (q^3+1)(q^2+q+1)-q(q+1)\rbrace-q^3(q^2-q+1)(q^2+q+1),&\\
		A_{4100}(A_{\alpha421})=q^2(q^3+1)A_{4111}(A_{\alpha421})-q^5A_{4221}(A_{\alpha421})&\\
		\qquad\qquad\qquad=q^5(q^2-1)(q^3+1)-q^6(q-1),
		\end{cases}&
	\end{flalign*}	
\begin{flalign*}
	&\begin{cases}
		A_{6100}(A_{\alpha421})=q^2(q^3+1)A_{6111}(A_{\alpha421})-q^5A_{6221}(A_{\alpha421})&\\
		\qquad\qquad\qquad=q^6(q+1)(q^3+1)-q^7,&\\
		\qquad\qquad\vdots&\\
		A_{\alpha100}(A_{\alpha421})=q^2(q^3+1)A_{\alpha111}(A_{\alpha421})-q^5A_{\alpha221}(A_{\alpha421})&\\
		\qquad\qquad\qquad=q^6(q+1)(q^3+1)-q^7,
	\end{cases}&
\end{flalign*}	
\begin{flalign*}
	&\begin{cases}
		A_{1000}(A_{\alpha421})=(q^3+1)A_{2111}(A_{\alpha421})-q^3A_{2221}(A_{\alpha421})&\\
		\qquad\qquad\qquad=(q^3+1)\lbrace (q^3+1)(q^2+q+1)-q(q+1)\rbrace-q^3(q^2+q+1),&\\
		A_{3000}(A_{\alpha421})=q^4(q+1)A_{3211}(A_{\alpha421})=q^5(q+1)(q^2-1),
	\end{cases}&
\end{flalign*}	
\begin{flalign*}
	&\begin{cases}
		A_{5000}(A_{\alpha421})=q^4(q+1)A_{5211}(A_{\alpha421})=0,&\\
		\qquad\qquad\vdots&\\
		A_{(\alpha-1)000}(A_{\alpha421})=0.
	\end{cases}&
\end{flalign*}

Now, by \eqref{eq3.2.5}, we have
\begin{equation*}\begin{array}{ll}
	A_{0000}'(A_{\alpha421})&=(1+q)(1-q^2)(1+q^3)\\
	&\times \mathlarger{\mathlarger{\lbrace}} (\dfrac{\alpha}{2}-1)+(q+1)+(q^2+q+1)+q(q-1)+q^2(\dfrac{\alpha}{2}-2)\\
	&\quad+q(q^2-1)+(q^3+1)(q^2+q+1)-q(q+1)+q^3(q^2-1)\\
	&\quad+q^4(q+1)(\dfrac{\alpha}{2}-2) \mathlarger{\mathlarger{\rbrace}}
	\end{array}
\end{equation*}
\begin{equation*}\begin{array}{ll}
	\qquad\qquad\qquad&+(1+q)(1-q^2)\\
		&\times \mathlarger{\mathlarger{\lbrace}} q^4(\dfrac{\alpha}{2}-1)+q^4(q+1)+q^2(q^2-q+1)(q^2+q+1)+q^4(q-1)^2\\ &+q^5(q-1)(\dfrac{\alpha}{2}-2)+(q^3+1)(q^2+q+1)-q(q+1)+q^3(q^2-1)\mathlarger{\mathlarger{\rbrace}}
	\end{array}
\end{equation*}
\begin{equation*}\begin{array}{ll}
		\qquad\qquad\qquad&+(1+q)\\
		&\times \mathlarger{\mathlarger{\lbrace}} q^5(q+1)(q^2-1)+q^2(q^3+1)\lbrace (q^3+1)(q^2+q+1)-q(q+1)\rbrace\\
		&-q^3(q^2-q+1)(q^2+q+1)+q^5(q^2-1)(q^3+1)-q^6(q-1)\\
		&+\lbrace q^6(q+1)(q^3+1)-q^7 \rbrace (\dfrac{\alpha}{2}-2)
		\mathlarger{\mathlarger{\rbrace}}
	\end{array}
\end{equation*}
\begin{equation*}\begin{array}{ll}
		\qquad\qquad\qquad& +(q^3+1)\lbrace (q^3+1)(q^2+q+1)-q(q+1)\rbrace\\
		&-q^3(q^2+q+1)+q^5(q+1)(q^2-1) \qquad\qquad\qquad\qquad\qquad\qquad\qquad\qquad\qquad
		
	\end{array}
\end{equation*}
\begin{equation*}\begin{array}{ll}
		\qquad\qquad\qquad&= (1+q)(1-q^2)(1+q^3)\lbrace \dfrac{\alpha}{2}(q^5+q^4+q^2+1)-q^4+q^3+2\rbrace\\
		&+(1+q)(1-q^2)\lbrace \dfrac{\alpha}{2}(q^6-q^5+q^4)+3q^5+3q^4+q^2+1\rbrace\\
		&+(1+q)\lbrace \dfrac{\alpha}{2}(q^{10}+q^9+q^6)-q^9+q^8+q^7-q^6-q^5-q^3+q^2\rbrace\\\\
		&+2q^8+2q^7-q^5+q^3+1
	\end{array}
\end{equation*}
\begin{equation*}\begin{array}{ll}
		\qquad\qquad&=\dfrac{a}{2}(q+1)(q^4+q^3+1)-q^8-3q^7-3q^6-q^5+3q^4+2q^3-q^2+3q+4.
	\end{array}
\end{equation*}

On the other hand, we have that

\begin{flalign*}
&\begin{cases}
A_{100}(A_{421})=(q^3+1)A_{111}(A_{421})-qA_{221}(A_{421})=q^4+q^3+1,&\\
A_{0000}'(A_{\alpha100})=\dfrac{a}{2}(q+1)+1,&
\end{cases}&
\end{flalign*}
\begin{flalign*}
	&\begin{cases}
A_{300}(A_{421})=q^2(q+1)A_{311}(A_{421})=0,
\end{cases}&
\end{flalign*}
\begin{flalign*}
		&\begin{cases}
	A_{210}(A_{421})=q(q-1)A_{221}(A_{421})=q(q-1),&\\
	A_{1110}(A_{\alpha210})=A_{111}(A_{\alpha21})=A_{00}(A_{(\alpha-1)1})=(q+1),
\end{cases}&
\end{flalign*}
\begin{flalign*}
		&\begin{cases}
	A_{410}(A_{421})=q^2A_{421}(A_{421})=q^2,&\\
	A_{1110}(A_{\alpha410})=A_{111}(A_{\alpha41})=A_{00}(A_{(\alpha-1)3})=(q^3+q^2+q+1),
\end{cases}&
\end{flalign*}
\begin{flalign*}
	&\begin{cases}
	A_{111}(A_{421})=A_{00}(A_{31})=q+1,&\\
	A_{1110}(A_{\alpha111})=A_{2111}(A_{\alpha111})=1,
\end{cases}&
\end{flalign*}
\begin{flalign*}
		&\begin{cases}
	A_{221}(A_{421})=1,&\\
	A_{1110}(A_{\alpha221})=A_{2111}(A_{\alpha221})=A_{100}(A_{(\alpha-1)11})=(q^3+1)A_{111}(A_{(\alpha-1)11})=(q^3+1),
\end{cases}&
\end{flalign*}
\begin{flalign*}
		&\begin{cases}
	A_{311}(A_{421})=0,
\end{cases}&
\end{flalign*}
\begin{flalign*}
		&\begin{cases}
			A_{421}(A_{421})=1,&\\
	A_{1110}(A_{\alpha421})=A_{2111}(A_{\alpha421})=(q^3+1)(q^2+q+1)-q(q+1).
\end{cases}&
\end{flalign*}

Therefore, we have that
\begin{equation*}\begin{array}{l}
	A_{100}(A_{421})A'_{0000}(A_{\alpha100})+A_{300}(A_{421})\lbrace A_{0000}'(A_{\alpha300})-A_{0000}'(A_{\alpha100})\rbrace\\
	+(1-q^2)A_{210}(A_{421})A_{1110}(A_{\alpha210})+(1-q^2)A_{410}(A_{421})A_{1110}(A_{\alpha410})\\
	+(1+q)(1-q^2)A_{111}(A_{421})A_{1110}(A_{\alpha111})
	+(1+q)(1-q^2)A_{221}(A_{421})A_{1110}(A_{\alpha221})\\
	+(1+q)(1-q^2)A_{311}(A_{421})A_{1110}(A_{\alpha311})+(1+q)(1-q^2)A_{421}(A_{421})A_{1110}(A_{\alpha421})\\\\
	=\dfrac{\alpha}{2}(q+1)(q^4+q+1)-q^8-3q^7-3q^6-q^5+3q^4+2q^3-q^2+3q+4
	\end{array}
\end{equation*}
This shows that \eqref{eq3.2.3} holds in this case.
	\end{enumerate}
\end{example}

\bigskip

From the example \ref{example3.4}, we conjecture that the following formula holds in general.

\begin{conjecture}[$\CN^0_{E/F}(1,3)$]\label{conjecture} Assume that $\alpha > \beta \geq \gamma \geq \delta \geq 0$. Then we have
	\begin{equation*}\begin{array}{ll}
		A_{0000}'(A_{\alpha\beta\gamma\delta})&=A_{100}(A_{\beta\gamma\delta})A_{0000}'(A_{\alpha100})\\\\
		&+\mathlarger{\sum}_{\lambda \geq 3}A_{\lambda00}(A_{\beta\gamma\delta})\lbrace A_{0000}' (A_{\alpha\lambda00})-A_{0000}'(A_{\alpha(\lambda-2)00})\rbrace\\\\
		&+(1-q^2)\sum_{\lambda \geq \kappa \geq 1} A_{\lambda\kappa0}(A_{\beta\gamma\delta}) A_{1110}(A_{\alpha\lambda\kappa0})\\\\
		&+(1+q)(1-q^2)\sum_{\lambda \geq \kappa \geq \mu \geq 1} A_{\lambda\kappa\mu}(A_{\beta\gamma\delta}) A_{1110}(A_{\alpha\lambda\kappa\mu}).
		
		\end{array}
	\end{equation*}
\end{conjecture}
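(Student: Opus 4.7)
The plan is to prove Conjecture \ref{conjecture} by expanding both sides via the Cho--Yamauchi formula \eqref{eq3.2.4} and then systematically applying the reductions of Theorem \ref{theorem2.5} in the form of Corollary \ref{corollary2.20}. First I would expand the LHS as a weighted sum over vertex-lattice types $(\lambda,\kappa,\mu,\eta)\in\Lambda_4^+$, splitting it into four pieces indexed by the number of nonzero coordinates. In parallel, each $A_{0000}'(A_{\alpha\lambda 00})$ appearing on the RHS would be expanded by the same formula, so that both sides become explicit linear combinations of basic densities $A_{\lambda\kappa\mu\eta}(A_{\alpha\beta\gamma\delta})$ whose equality one can attempt to verify term by term.

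Next, for every term with some $\eta=0$ I would apply Corollary \ref{corollary2.20} (items (1)--(6), (11)--(15), and (17)--(20)) to rewrite each of $A_{\lambda\kappa\mu 0}$, $A_{\lambda\kappa 00}$, and $A_{\lambda 000}$ as a linear combination of densities with all four indices positive plus ``boundary'' pieces involving only the $\alpha$-coordinate. Proposition \ref{proposition2.15} then shifts each all-positive density down by $1$ in every coordinate when $\delta\geq 1$, while Proposition \ref{proposition2.14} drops trailing zeros when $\delta=0$, giving an induction framework: for $\delta\geq 1$ one descends to $(\alpha-1,\beta-1,\gamma-1,\delta-1)$, and for $\delta=0$ one descends in rank from $n=4$ to $n=3$ and invokes Corollary \ref{corollary2.19}. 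The residual base cases $(\beta,\gamma,\delta)\in\{(0,0,0),(1,0,0),(1,1,0),(1,1,1)\}$ together with the small cases involving $L_{\lambda 00}$, $L_{\lambda\kappa 0}$, and $L_{\lambda\kappa 1}$ (for small $\lambda,\kappa$) are exactly those computed by hand in Example \ref{example3.4}.

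The critical algebraic identity to isolate is a closed form for the difference $A_{0000}'(A_{\alpha\lambda 00}) - A_{0000}'(A_{\alpha(\lambda-2)00})$; case (3) of Example \ref{example3.4} evaluates this difference as $q^2(q+1)\tfrac{\alpha}{2}-q^3+q+1$ when $\alpha$ is even and $\lambda=3$, and I would first establish the analogue for all $\lambda\geq 3$ and both parities of $\alpha$ (directly from Cho--Yamauchi and Corollary \ref{corollary2.20}) before matching the coefficient of $A_{\lambda 00}(A_{\beta\gamma\delta})$ in the reorganized LHS to that closed form. The main obstacle is the combinatorial bookkeeping: after all of the reductions the LHS collapses to a long alternating sum whose cancellations must reproduce exactly the four coefficient patterns appearing on the RHS, and the vanishing condition $A(B)=0$ whenever $\val\det A\not\equiv \val\det B \pmod 2$ forces several parallel subcases based on the parities of $\alpha,\beta,\gamma,\delta$. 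Modelling the argument on the proofs of \cite[Theorem 4.2.1, Theorem 5.4.1]{LZ}, which carry out an analogous ``horizontal plus vertical'' decomposition on the geometric side, should provide a clean template for organizing the induction and identifying the cancellations.
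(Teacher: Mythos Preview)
The statement you are attempting to prove is not proved in the paper: it is stated as Conjecture \ref{conjecture} and explicitly left open. The paper's only treatment of it is Example \ref{example3.4}, where the identity is verified by direct computation in six specific cases ($L_{\beta\gamma\delta}=L_{100},L_{111},L_{300},L_{311},L_{210},L_{421}$), each time expanding both sides via the Cho--Yamauchi formula \eqref{eq3.2.4} and evaluating every normalized density with Corollary \ref{corollary2.20}. There is no inductive argument, no general base case analysis, and no claim that the method extends to arbitrary $(\beta,\gamma,\delta)$; Remark \ref{remark} makes clear the authors regard the general statement as open.

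Your proposal is therefore not comparable to a proof in the paper, because none exists. As a strategy toward a proof it is reasonable in outline but is only a plan: you yourself identify the combinatorial bookkeeping as the main obstacle, and that obstacle is exactly the content of the conjecture. In particular, the reductions of Corollary \ref{corollary2.20} let you rewrite individual densities, but they do not by themselves produce the factorized structure on the RHS, where the $A_{\beta\gamma\delta}$-dependence and the $\alpha$-dependence separate into products like $A_{\lambda\kappa\mu}(A_{\beta\gamma\delta})\cdot A_{1110}(A_{\alpha\lambda\kappa\mu})$. Establishing that factorization from the LHS expansion is the nontrivial step, and nothing in the paper (or in your sketch) supplies the identity that would force it. The induction you describe would also need, at each step, a version of the conjecture for the reduced parameters, so the inductive hypothesis has the same shape as the goal; without an independent algebraic identity linking the two sides, the recursion does not terminate in the handful of base cases from Example \ref{example3.4}.
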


\begin{remark}\label{remark}
	This Conjecture \ref{conjecture} suggests that the arithmetic intersection numbers of special cycles on unitary Rapoport-Zink spaces $\CN_{E/F}^0(1,3)$ are closely related to the counting of vertex lattices. Let $L_{\beta\gamma\delta}$ be a lattice of rank $3$ with the hermitian matrix $A_{\beta\gamma\delta}$. By \cite[Theorem 4.2.1]{LZ}, $A_{0000}'(A_{\alpha\lambda00})$ terms in the conjecture are related to the horizontal part of $\CZ(L_{\beta\gamma\delta})$. Also, even though it is not clear yet (for example, we do not know why we have $(1+q)(1-q^2)$ in the sum instead of $(1-q^2)$), we believe that this conjecture has information about the multiplicities of the vertical components attached to vertex lattices of type $3$ in the special cycle $\CZ(L_{\beta\gamma\delta})$.
	
	Based on the fact that the irreducible components of the reduced subscheme of $\CN_{E/F}^h(1,n-1)$ are closely related to vertex lattices (\cite{VW}, \cite{Cho}), it is reasonable to ask the following quesiton: for the conjectural formula for the arithmetic intersection numbers of special cycles on $\CN_{E/F}^h(1,n-1)$ in \cite[Conjecture 3.17, Conjecture 3.25]{Cho2}, can we write it in terms of vertex lattice counting plus some additional parts like horizontal parts? We believe that this will give us important information about special cycles. We already have a version of the Cho-Yamauchi's formula in \cite[Conjecture 4.9]{Cho3}, and we have an efficient computational method by Theorem \ref{theorem2.5}. Therefore, we think that we have enough tools to consider this question. We hope to return to this in the future.
	\end{remark}

\bigskip
\begin{bibdiv}
	\begin{biblist}

	\bib{Cho}{article}{
	title={The basic locus of the unitary Shimura variety with parahoric level structure, and special cycles},
	subtitle={},
	author={Cho, Sungyoon},
	author={},
	journal={},
	volume={},
	date={2018},
	pages={}
	review={}
	status={preprint}
}

	\bib{Cho2}{article}{
	title={Special cycles on unitary Shimura varieties with minuscule parahoric level structure},
	subtitle={},
	author={Cho, Sungyoon},
	author={},
	journal={},
	volume={},
	date={2022},
	pages={}
	review={}
	status={To appear Math. Ann.}
}
	\bib{Cho3}{article}{
	title={On local representation densities of hermitian forms and special cycles},
	subtitle={},
	author={Cho, Sungyoon},
	author={},
	journal={},
	volume={},
	date={2022},
	pages={}
	review={}
	status={preprint}
}
	\bib{CY}{article}{
	title={A reformulation of the Siegel series and intersection numbers},
	subtitle={},
	author={Cho, Sungmun},
	author={Yamauchi, Takuya},
	journal={Math. Ann.},
	volume={377},
	date={2020},
	pages={1757--1826}
	review={\MR{4126907}}

}

\bib{Hir2}{article}{
	title={Local zeta functions on Hermitian forms and its application to local densities},
	subtitle={},
	author={Hironaka, Y.},
	author={},
	author={},
	journal={J. Number Theory},
	volume={71},
	date={1998},
	pages={40--64}
	review={\MR{1631034}}
}




\bib{KR2}{article}{
title={Special cycles on unitary Shimura varieties I. unramified local theory},
subtitle={},
author={Kudla, S.},
author={Rapoport, M.},
author={},
journal={Invent. Math.},
volume={184},
date={2011},
pages={629--682}
review={\MR{2800697}}
}
\bib{KR3}{article}{
title={Special cycles on unitary Shimura varieties II. global theory},
subtitle={},
author={Kudla, S.},
author={Rapoport, M.},
author={},
journal={J. Reine Angew. Math.},
volume={697},
date={2014},
pages={91--157}
review={\MR{3281653}}
}



\bib{LZ}{article}{
	title={Kudla-Rapoport cycles and derivatives of local densities},
	subtitle={},
	author={Li, C.},
	author={Zhang, W.},
	author={},
	journal={J. Amer. Math. Soc.},
	volume={35},
	date={2022},
	pages={705--797}
	review={\MR{4433078}}
	status={}
}

\bib{San2}{article}{
	title={Unitary cycles on Shimura curves and the Shimura lift I},
	subtitle={},
	author={Sankaran, S.},
	author={},
	author={},
	journal={Doc. Math.},
	volume={18},
	date={2013},
	pages={1403--1464},
	status={}
	review={\MR{3138850}}
}
\bib{San}{article}{
	title={Improper intersections of Kudla-Rapoport divisors and Eisenstein series},
	subtitle={},
	author={Sankaran, S.},
	author={},
	author={},
	journal={J. Inst. Math. Jussieu.},
	volume={16},
	date={2017},
	pages={899--945},
	status={}
	review={\MR{3709001}}
}

\bib{VW}{article}{
	title={The supersingular locus of the Shimura variety for $GU(1,s)$ II},
	subtitle={},
	author={Vollaard, I.},
	author={Wedhorn, T.},
	journal={Invent. Math.},
	volume={184},
	date={2011},
	pages={591--627},
	status={}
	review={\MR{2800696}}
}

			\end{biblist}
\end{bibdiv}

\end{document}